\def\bigskip{\vspace{14pt}}
\colorlet{darkishRed}{red!80!black}
\colorlet{darkishBlue}{blue!60!black}
\colorlet{darkishGreen}{green!60!black}
\let\setminus=\smallsetminus
\let\setminus=\smallsetminus
\renewcommand{\leq}{\leqslant}
\renewcommand{\geq}{\geqslant}
\def\namedlabel#1#2{\begingroup
   \def\@currentlabel{#2}%
   \label{#1}\endgroup
}
\let\rho=\varrho
\let\phi=\varphi
\DeclareMathOperator{\interior}{int}
\newcommand{\shiftingfct}[2]{f\mkern-6mu\downarrow_{#1}^{#2}}
\newcommand{ \R } { \mathbb{R} }
\newcommand{ \N } { \mathbb{N} }
\newcommand{\orange}[1]{\textcolor{orange}{#1}}
\newcommand{\abs}[1]{\lvert#1\rvert}
\def\calCommandfactory#1{%
   \expandafter\def\csname c#1\endcsname{\mathcal{#1}}}
\def\frakCommandfactory#1{%
   \expandafter\def\csname frak#1\endcsname{\mathfrak{#1}}}
\newcounter{ctr}
  \edef\X{\@Alph\c@ctr}
  \edef\Y{\@alph\c@ctr}
\newtheorem{theorem}{Theorem}[section] 
\newtheorem{proposition}[theorem]{Proposition}
\newtheorem{lemma}[theorem]{Lemma}
\newtheorem{LEM}[theorem]{Lemma}
\newtheorem{THM}[theorem]{Theorem}
\newtheorem{COR}[theorem]{Corollary}
\theoremstyle{definition}
\newtheorem{claim}{Claim}
\crefname{claim}{Claim}{Claims}
\newenvironment{claimproof}{\noindent\textit{Proof.}}{\hfill\ensuremath{\blacksquare}\medskip}
\newlist{thmlist}{enumerate}{1}
\setlist[thmlist]{label=(\roman{thmlisti}), ref=\thethm.(\roman{thmlisti}),noitemsep}
\def\td{tree-decom\-po\-sition}
\def\tds{tree-decom\-po\-sitions}
\newenvironment{customthm}[1]
  {\innercustomthm}
  {\endinnercustomthm}
\theoremstyle{definition}
\newtheorem{example}[theorem]{Example}
\theoremstyle{remark}
\title{Optimal trees of tangles: refining the essential parts}
\author{Sandra Albrechtsen}
\subjclass{05C83, 05C40, 06A07}
\keywords{Tree of tangles, tree-decomposition, tangle-tree duality, abstract separation system.}
\begin{document}
	\maketitle

\pagenumbering{arabic}

\begin{abstract}
    \noindent 
    We combine the two fundamental fixed-order tangle theorems of Robertson and Seymour into a single theorem that implies both, in a best possible way.
    We show that, for every $k \in \N$, every \td\ of a graph $G$ which efficiently distinguishes all its $k$-tangles can be refined to a \td\ whose parts are either too small to be home to a $k$-tangle, or as small as possible while being home to a $k$-tangle.
\end{abstract}

\section{Introduction}

\emph{Tangles} were introduced by Robertson and Seymour \cite{GMX} as a way to capture `highly cohesive' structures of a graph. 
Formally, a \emph{$k$-tangle} in a graph $G$, for some $k \in \N$, is a consistent orientation of all the separations $\{A,B\}$ of $G$ of order less than $k$, as $(A,B)$ say, such that no three such oriented separations together cover the whole graph by the subgraphs induced on their `small sides' $A$.
The idea is that every highly cohesive substructure of $G$ will lie mostly on one side of every low-order separation, and therefore orients it towards that side, 
thereby defining a tangle.

Since its first introduction, the notion of a tangle and its framework in graphs have been generalized to so-called \emph{abstract separation systems} \cite{ASS}. 
Although these abstract separation systems are significantly more general than the separation system of a graph, the two fundamental theorems about graph tangles -- the `tree-of-tangles' theorem and the `tangle-tree duality' theorem of Robertson and Seymour \cite{GMX} -- are still valid in this setting. We shall also prove most of our results in this more general setting.
	
In what follows we assume familiarity only with graph tangles as described in \cite[Ch.~12.5]{DiestelBook16noEE}.
\bigskip


The \emph{tree-of-tangles theorem} asserts the existence of a \td\ $(T,\cV)$ of $G$, with $\cV = (V_t)_{t \in T}$, say, 
which \emph{distinguishes} all the $k$-tangles in $G$: for every pair of $k$-tangles there exists an edge of $T$ 
which induces a separation that is oriented differently by these two $k$-tangles. For this, recall that $G$ reflects the separation properties of $T$: similar as the deletion of an edge $e = \{t_1,t_2\} \in E(T)$ separates $T$ into two subtrees $T_1 \ni t_1$ and $T_2 \ni t_2$, the sets $U_i := \bigcup_{t \in T_i} V_t$, for $i = 1,2$, form a separation $\{U_1, U_2\}$ of $G$.

Since $(T, \cV)$ distinguishes all the $k$-tangles in $G$, each $k$-tangle \emph{lives} in a different part: the part $G[V_t]$ specified by the property that the given $k$-tangle orients all the separations induced by edges of $T$ incident with~$t$ to that side which contains~$V_t$. 
We call the set of these oriented separations the \emph{star associated with~$V_t$}.
Such a tangle-distinguishing \td\ provides information about the overall structure of the graph and the location of the tangles inside it. A part $V_t \in \cV$ is called \emph{essential} if there is a tangle living in it, and otherwise \emph{inessential}.

In general, the inessential parts might contain a large portion of $G$. The \td\ then tells us nothing about the structure of this portion. 
However, if there are no $k$-tangles in~$G$ at all, there is another theorem which does tell us something about its structure: the \emph{tangle-tree duality theorem}. This asserts the existence of a \td\ in which each part is too small to be home to a tangle, and which thus witnesses that~$G$ has no $k$-tangles (since any $k$-tangle would have to live in some part). 
Formally, such a \td\ has the property that the stars associated with its parts are elements of $\cT_k$: the collection of all those sets of at most three oriented separations which every $k$-tangle has to avoid.

As inessential parts are not home to any tangles, the tangle-tree duality theorem applies locally to these parts. We thus obtain \tds\ of all the inessential parts in~$\cV$ into smaller parts, too small to accommodate a $k$-tangle. Can all these \tds\ be combined into a single \td\ of~$G$ that refines~$(T, \cV)$? 

In general, this will not be possible. However, Erde \cite{JoshRefining} showed that it can be done if those local \tds\ are constructed carefully for this purpose. The following theorem follows directly from \cite[Lemma~3.1]{JoshRefining}:

\begin{THM}{\rm{(Erde 2017)}}\label{thm:JERefining}
    Let $(\tilde{T}, \tilde{\cV})$ be a \td\ of a graph $G$ which distinguishes all the $k$-tangles in $G$, for some $k \geq 3$, and is such that every separation of $G$ induced by an edge of $\tilde{T}$ efficiently distinguishes some pair of $k$-tangles in $G$.  
	Then there exists a \td\ $(T, \cV)$ of $G$ which refines $(\tilde{T}, \tilde{\cV})$ and is such that every star associated with an inessential part of $(T, \cV)$ is a star in $\cT_k$.
\end{THM}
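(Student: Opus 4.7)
\medskip
\noindent\emph{Proof plan.}
My plan is to leave the essential parts of $(\tilde T, \tilde \cV)$ unchanged and refine only the inessential parts, by applying the abstract tangle--tree duality theorem locally inside each of them and then glueing the resulting nested sets of separations to the separations induced by the edges of $\tilde T$.

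First, for each inessential node $t$ of $\tilde T$ with associated star $\tilde\sigma_t$, I would consider the local separation subsystem $S_t\subseteq \vS_k(G)$ consisting of those separations that live strictly inside $V_t$ (i.e.\ are nested with $\tilde\sigma_t$ and have both orientations on the $V_t$-side), together with the restriction of $\cF$ to $S_t$. Since $V_t$ is inessential, $S_t$ admits no $\cF$-tangle: any such local tangle would extend, by orienting $\tilde\sigma_t$ towards $V_t$, to an $\cF$-tangle of $\vS_k(G)$ living in~$V_t$. Friendliness of $\cF$ ensures that the tangle--tree duality theorem can be applied to $S_t$, producing a nested set $N_t\subseteq S_t$ whose corresponding local tree-decomposition of $V_t$ has every associated star (the interior stars together with the boundary star, which combines interior separations with the inverses of members of $\tilde\sigma_t$) lying in $\cF$.

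Second, I would set $N:=E(\tilde T)\cup\bigcup_{t\,\text{inessential}} N_t$, the union of the separations induced by the edges of $\tilde T$ and the local nested sets. Each $N_t$ is nested with $E(\tilde T)$ since its members live strictly inside $V_t$ and are nested with $\tilde\sigma_t$, and for distinct inessential $t\neq t'$ the sets $N_t$ and $N_{t'}$ are nested because they lie on opposite sides of the edge-separation of $\tilde T$ between $V_t$ and $V_{t'}$. Hence $N$ is nested in $\vS_k(G)$, and the tree-decomposition $(T,\cV)$ it induces refines $(\tilde T,\tilde \cV)$ because $E(\tilde T)\subseteq N$. Finally, I would check the conclusion: essential parts of $(\tilde T,\tilde \cV)$ are unchanged in $(T,\cV)$ and remain essential; a part of $(T,\cV)$ that properly refines an inessential $V_t$ cannot contain an $\cF$-tangle (since $V_t$ did not), so the new parts are exactly the inessential parts of $(T,\cV)$, and by construction their associated stars lie in $N_t\subseteq \cF$.

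The hard step will be Step~1, namely defining $S_t$ and the restricted family so that (a)~absence of $\cF$-tangles living in $V_t$ translates cleanly into absence of $\cF$-tangles of $S_t$, and (b)~the stars produced by the local duality theorem, once combined with the boundary separations $\tilde\sigma_t$ at the leaf-nodes of the local tree, remain stars of the \emph{global} $\cF$. This is where the efficient-distinguishing hypothesis on $E(\tilde T)$ is used: it forces the separations in $\tilde\sigma_t$ to have minimum order for the role they play, which prevents the local duality theorem from being forced to use separations outside the admissible family along the boundary of the refinement.
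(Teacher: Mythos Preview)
Your outline has the right shape, but Step~1 contains a real gap at both of the points you yourself flag. The extension claim does not go through: a consistent orientation of your local system $S_t$ together with $\tilde\sigma_t$ only orients those separations of $S_k(G)$ that are \emph{nested} with $\tilde\sigma_t$; to obtain a full $\cF$-tangle of $S_k(G)$ you must also orient every separation that crosses some member of $\tilde\sigma_t$, and there is no canonical way to do this preserving consistency, the profile property, and avoidance of $\cF$. So ``$S_t$ has no $\cF$-tangle'' does not follow from ``$V_t$ is inessential'' by the argument you give. Equally, the boundary-star compatibility is not established: friendliness of $\cF$ means \emph{closed under shifting}, a statement about images under the maps $\shiftingfct{\vs}{\vr}$ on the global system, not about restrictions to subsystems. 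There is no general reason a star produced by duality inside $S_t$, once combined with the boundary separations from $\tilde\sigma_t$, lies in the global $\cF$; your remark that efficiency ``prevents the local duality theorem from being forced to use separations outside the admissible family'' is an intuition, not a mechanism.

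The paper (following Erde) does not pass to a local subsystem at all. For each inessential star $\sigma=\{\vs_1,\dots,\vs_n\}$ it works in $\vS_k(G)$ and appeals to Lemma~\ref{lem:reflem}, whose hypothesis is that each $\sv_i$ is \emph{closely related} to some $\cF$-tangle $P_i$: that is, $\sv_i\in P_i$ and $\vr\wedge\sv_i\in\vS_k(G)$ for every $\vr\in P_i$. This is precisely where the efficiency hypothesis enters, and the mechanism is submodularity: if $s_i$ efficiently distinguishes $P_i\ni\sv_i$ from some $P'_i$, then for any $\vr\in P_i$ the corner $\vr\vee\sv_i$ still distinguishes $P_i$ from $P'_i$, so has order $\ge|s_i|$, and hence $\vr\wedge\sv_i\in\vS_k(G)$. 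Closely-relatedness then guarantees that $\sv_i$ emulates the relevant separations, so one runs tangle--tree duality in the \emph{global} system and shifts the resulting $S$-tree onto the $\sv_i$ via $\shiftingfct{\sv_i}{\vr}$; closure of $\cF$ under shifting is exactly what keeps the shifted stars in $\cF$ and yields an $S$-tree over $\cF\cup\{\{\sv_1\},\dots,\{\sv_n\}\}$ with each $\vs_i$ as a leaf separation. Glueing these $S$-trees into $(\tilde T,\tilde\cV)$ is then routine.
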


\noindent See \cref{sec:ASS} for definitions.
\medskip

The procedure of refining $(T, \cV)$ by further decomposing its inessential parts can be seen as a way to describe the structure of those parts in a more precise way. 
However, just as the inessential parts of~$(T, \cV)$ may contain a large portion of $G$, another large portion may be hidden inside its essential parts. The \td\ $(T, \cV)$ does not tell us much about this portion. We know from the fact that these parts are essential that there is a tangle living there, but an essential part of $(T, \cV)$ can also contain many vertices and edges that have nothing to do with that tangle.

To get a better insight into the structure of the essential parts of $(T, \cV)$ too, it would be desirable to decompose them by further \tds\ as well. Overall, it would be useful to have a single \td\ of~$G$ whose inessential and essential parts are both as small as possible. As every essential part is home to a tangle, by definition, and every tangle corresponds to some highly cohesive (but maybe fuzzy) substructure of~$G$ which cannot be divided into small parts, `small' can only be achieved relatively to the tangle living in that part. In other words, the essential parts should be as small as possible so that they can still be home to a tangle.

\begin{figure}[h]
  \centering
  \begin{subfigure}[b]{0.35\linewidth}
  \definecolor{dgreen}{rgb}{0,0.7,0}
\definecolor{grey}{rgb}{0.6,0.6,0.6}
\scalebox{0.63}{%
\begin{tikzpicture}
\draw [line width=1.3pt,color=grey] (7,1) circle (3.5cm);
\draw [line width=1.3pt,color=grey] (10.56,3.71) circle (1.5cm);
\draw [line width=1.3pt,color=grey] (3.86,4.19) circle (1.5cm);
\draw [line width=1.3pt,color=grey] (4.82,-2.85) circle (1.5cm);
\draw [line width=1.3pt] (2.61,3.9)-- (2.98,5.12);
\draw [line width=1.3pt] (2.98,5.12)-- (4.23,5.42);
\draw [line width=1.3pt] (4.23,5.42)-- (5.1,4.49);
\draw [line width=1.3pt] (5.1,4.49)-- (4.73,3.26);
\draw [line width=1.3pt] (4.73,3.26)-- (3.49,2.97);
\draw [line width=1.3pt] (3.49,2.97)-- (2.61,3.9);
\draw [line width=1.3pt] (2.61,3.9)-- (5.1,4.49);
\draw [line width=1.3pt] (5.1,4.49)-- (2.98,5.12);
\draw [line width=1.3pt] (2.98,5.12)-- (4.73,3.26);
\draw [line width=1.3pt] (4.73,3.26)-- (4.23,5.42);
\draw [line width=1.3pt] (4.23,5.42)-- (3.49,2.97);
\draw [line width=1.3pt] (3.49,2.97)-- (2.98,5.12);
\draw [line width=1.3pt] (2.61,3.9)-- (4.73,3.26);
\draw [line width=1.3pt] (2.61,3.9)-- (4.23,5.42);
\draw [line width=1.3pt] (3.49,2.97)-- (5.1,4.49);
\draw [line width=1.3pt,color=dgreen] (6.39,2.48)-- (7.64,2.19);
\draw [line width=1.3pt,color=dgreen] (7.64,2.19)-- (8.01,0.96);
\draw [line width=1.3pt,color=dgreen] (8.01,0.96)-- (7.13,0.03);
\draw [line width=1.3pt,color=dgreen] (7.13,0.03)-- (5.88,0.32);
\draw [line width=1.3pt,color=dgreen] (5.88,0.32)-- (5.51,1.55);
\draw [line width=1.3pt,color=dgreen] (5.51,1.55)-- (6.39,2.48);
\draw [line width=1.3pt,color=dgreen] (6.39,2.48)-- (7.13,0.03);
\draw [line width=1.3pt,color=dgreen] (7.13,0.03)-- (7.64,2.19);
\draw [line width=1.3pt,color=dgreen] (7.64,2.19)-- (5.51,1.55);
\draw [line width=1.3pt,color=dgreen] (5.51,1.55)-- (8.01,0.96);
\draw [line width=1.3pt,color=dgreen] (8.01,0.96)-- (5.88,0.32);
\draw [line width=1.3pt,color=dgreen] (5.88,0.32)-- (6.39,2.48);
\draw [line width=1.3pt,color=dgreen] (6.39,2.48)-- (8.01,0.96);
\draw [line width=1.3pt,color=dgreen] (5.88,0.32)-- (7.64,2.19);
\draw [line width=1.3pt,color=dgreen] (5.51,1.55)-- (7.13,0.03);
\draw [line width=1.3pt] (10.25,4.95)-- (9.33,4.06);
\draw [line width=1.3pt] (9.33,4.06)-- (9.64,2.82);
\draw [line width=1.3pt] (9.64,2.82)-- (10.86,2.47);
\draw [line width=1.3pt] (10.86,2.47)-- (11.79,3.35);
\draw [line width=1.3pt] (11.79,3.35)-- (11.48,4.6);
\draw [line width=1.3pt] (11.48,4.6)-- (10.25,4.95);
\draw [line width=1.3pt] (10.25,4.95)-- (10.86,2.47);
\draw [line width=1.3pt] (10.86,2.47)-- (11.48,4.6);
\draw [line width=1.3pt] (11.48,4.6)-- (9.33,4.06);
\draw [line width=1.3pt] (9.33,4.06)-- (11.79,3.35);
\draw [line width=1.3pt] (11.79,3.35)-- (10.25,4.95);
\draw [line width=1.3pt] (10.25,4.95)-- (9.64,2.82);
\draw [line width=1.3pt] (9.64,2.82)-- (11.48,4.6);
\draw [line width=1.3pt] (9.33,4.06)-- (10.86,2.47);
\draw [line width=1.3pt] (9.64,2.82)-- (11.79,3.35);
\draw [line width=1.3pt] (4.25,-1.7)-- (5.53,-1.78);
\draw [line width=1.3pt] (5.53,-1.78)-- (6.1,-2.93);
\draw [line width=1.3pt] (6.1,-2.93)-- (5.4,-3.99);
\draw [line width=1.3pt] (5.4,-3.99)-- (4.12,-3.92);
\draw [line width=1.3pt] (4.12,-3.92)-- (3.55,-2.77);
\draw [line width=1.3pt] (4.25,-1.7)-- (3.54715605064575,-2.77);
\draw [line width=1.3pt] (3.55,-2.77)-- (6.1,-2.93);
\draw [line width=1.3pt] (6.1,-2.93)-- (4.25,-1.7);
\draw [line width=1.3pt] (4.25,-1.7)-- (5.4,-3.99);
\draw [line width=1.3pt] (5.4,-3.99)-- (3.55,-2.77);
\draw [line width=1.3pt] (3.55,-2.77)-- (5.53,-1.78);
\draw [line width=1.3pt] (5.53,-1.78)-- (4.12,-3.92);
\draw [line width=1.3pt] (4.12,-3.92)-- (6.1,-2.93);
\draw [line width=1.3pt] (4.25,-1.7)-- (4.12,-3.92);
\draw [line width=1.3pt] (5.53,-1.78)-- (5.4,-3.99);
\draw [line width=1.3pt] (7.13,0.03)-- (7.9,-0.84);
\draw [line width=1.3pt] (7.9,-0.84)-- (9.35,-0.67);
\draw [line width=1.3pt] (8.01,0.96)-- (9.15,0.65);
\draw [line width=1.3pt] (7.64,2.19)-- (7.57,3.14);
\draw [line width=1.3pt] (7.57,3.14)-- (8.62,3.02);
\draw [line width=1.3pt] (8.62,3.02)-- (7.64,2.19);
\draw [line width=1.3pt] (7.57,3.14)-- (7,4);
\draw [line width=1.3pt] (8.62,3.02)-- (9.64,2.82);
\draw [line width=1.3pt] (6.34,-1.06)-- (5.53,-1.78);
\draw [line width=1.3pt] (5.51,1.55)-- (4.75,2.34);
\draw [line width=1.3pt] (4.75,2.34)-- (4.73,3.26);
\draw [line width=1.3pt] (4.75,2.34)-- (6.39,2.48);

\draw [rotate around={-30.94:(5.41,-1.85)},line width=2pt,color=blue] (5.41,-1.85) ellipse (0.94cm and 0.3cm);
\draw [rotate around={45.33:(4.67,3.34)},line width=2pt,color=blue] (4.67,3.34) ellipse (0.88cm and 0.25cm);
\draw [rotate around={-54.27:(9.62,3.01)},line width=2pt,color=blue] (9.62,3.01) ellipse (0.97cm and 0.28cm);

\draw [->,line width=1.3pt,color=blue] (9.51,2.68) -- (8.96,2.2);
\draw [->,line width=1.3pt,color=blue] (5.06,3.35) -- (5.64,2.85);
\draw [->,line width=1.3pt,color=blue] (5.36,-1.48) -- (5.73,-0.91);

\draw [line width=1.3pt] (6.34,-1.06)-- (5.88,0.32);
\draw [line width=1.3pt] (5.88,0.32)-- (4.44,0.56);
\draw [color=dgreen](8.2,1.9) node[anchor=north west] {\huge $\tau$};
\draw [color=blue](5.8,4.3) node[anchor=north west] {\huge $\sigma$};
\begin{scriptsize}
\draw [fill=black] (4.73,3.26) circle (2.7pt);
\draw [fill=black] (9.64,2.82) circle (2.7pt);
\draw [fill=black] (5.53,-1.78) circle (2.7pt);
\draw [fill=black] (5.1,4.49) circle (2.7pt);
\draw [fill=black] (4.23,5.42) circle (2.7pt);
\draw [fill=black] (2.98,5.12) circle (2.7pt);
\draw [fill=black] (2.61,3.9) circle (2.7pt);
\draw [fill=black] (3.49,2.97) circle (2.7pt);
\draw [fill=black] (4.25,-1.7) circle (2.7pt);
\draw [fill=black] (3.55,-2.77) circle (2.7pt);
\draw [fill=black] (4.12,-3.92) circle (2.7pt);
\draw [fill=black] (5.4,-3.99) circle (2.7pt);
\draw [fill=black] (6.1,-2.93) circle (2.7pt);
\draw [fill=black] (10.86,2.47) circle (2.7pt);
\draw [fill=black] (11.79,3.35) circle (2.7pt);
\draw [fill=black] (11.48,4.6) circle (2.7pt);
\draw [fill=black] (10.25,4.95) circle (2.7pt);
\draw [fill=black] (9.33,4.06) circle (2.7pt);
\draw [dgreen, fill=dgreen] (7.64,2.19) circle (2.7pt);
\draw [dgreen, fill=dgreen] (6.39,2.48) circle (2.7pt);
\draw [dgreen, fill=dgreen] (5.51,1.55) circle (2.7pt);
\draw [dgreen, fill=dgreen] (5.88,0.32) circle (2.7pt);
\draw [dgreen, fill=dgreen] (7.13,0.03) circle (2.7pt);
\draw [dgreen, fill=dgreen] (8.01,0.96) circle (2.7pt);
\draw [fill=black] (7.9,-0.84) circle (2.7pt);
\draw [fill=black] (9.35,-0.67) circle (2.7pt);
\draw [fill=black] (9.15,0.65) circle (2.7pt);
\draw [fill=black] (7.57,3.14) circle (2.7pt);
\draw [fill=black] (8.62,3.02) circle (2.7pt);
\draw [fill=black] (7,4) circle (2.7pt);
\draw [fill=black] (6.34,-1.06) circle (2.7pt);
\draw [fill=black] (4.75,2.34) circle (2.7pt);
\draw [fill=black] (4.44,0.6) circle (2.7pt);
\end{scriptsize}
\end{tikzpicture}
}%

\subcaption{}
    \label{fig:ToTWithSmallParts1}
  \end{subfigure}
\hspace{14mm}
  \begin{subfigure}[b]{0.35\linewidth}
    \definecolor{dgreen}{rgb}{0,0.7,0}
\definecolor{grey}{rgb}{0.6,0.6,0.6}
\scalebox{0.63}{%
\begin{tikzpicture}
\draw [line width=1.3pt,color=grey] (26.86,3.73) circle (1.5cm);
\draw [line width=1.3pt,color=grey] (20.16,4.22) circle (1.5cm);
\draw [line width=1.3pt,color=grey] (21.13,-2.82) circle (1.5cm);
\draw [line width=1.3pt] (18.91,3.92)-- (19.28,5.15);
\draw [line width=1.3pt] (19.28,5.15)-- (20.53,5.44);
\draw [line width=1.3pt] (20.53,5.44)-- (21.4,4.51);
\draw [line width=1.3pt] (21.4,4.517)-- (21.04,3.28);
\draw [line width=1.3pt] (21.04,3.28)-- (19.79,2.99);
\draw [line width=1.3pt] (19.79,2.99)-- (18.91,3.92);
\draw [line width=1.3pt] (18.91,3.92)-- (21.4,4.51);
\draw [line width=1.3pt] (21.4,4.51)-- (19.28,5.15);
\draw [line width=1.3pt] (19.28,5.15)-- (21.04,3.28);
\draw [line width=1.3pt] (21.04,3.28)-- (20.53,5.44);
\draw [line width=1.3pt] (20.53,5.44)-- (19.79,2.99);
\draw [line width=1.3pt] (19.79,2.99)-- (19.28,5.15);
\draw [line width=1.3pt] (18.91,3.92)-- (21.04,3.28);
\draw [line width=1.3pt] (18.91,3.92)-- (20.53,5.44);
\draw [line width=1.3pt] (19.79,2.99)-- (21.4,4.51);
\draw [line width=1.3pt,color=dgreen] (22.69,2.5)-- (23.94,2.21);
\draw [line width=1.3pt,color=dgreen] (23.94,2.21)-- (24.31,0.99);
\draw [line width=1.3pt,color=dgreen] (24.31,0.99)-- (23.43,0.05);
\draw [line width=1.3pt,color=dgreen] (23.43,0.05)-- (22.18,0.35);
\draw [line width=1.3pt,color=dgreen] (22.18,0.35)-- (21.81,1.57);
\draw [line width=1.3pt,color=dgreen] (21.81,1.57)-- (22.69,2.5);
\draw [line width=1.3pt,color=dgreen] (22.69,2.5)-- (23.43,0.05);
\draw [line width=1.3pt,color=dgreen] (23.43,0.05)-- (23.94,2.21);
\draw [line width=1.3pt,color=dgreen] (23.94,2.21)-- (21.81,1.57);
\draw [line width=1.3pt,color=dgreen] (21.81,1.57)-- (24.31,0.99);
\draw [line width=1.3pt,color=dgreen] (24.31,0.99)-- (22.18,0.35);
\draw [line width=1.3pt,color=dgreen] (22.18,0.35)-- (22.69,2.5);
\draw [line width=1.3pt,color=dgreen] (22.69,2.5)-- (24.31,0.99);
\draw [line width=1.3pt,color=dgreen] (22.18,0.35)-- (23.94,2.21);
\draw [line width=1.3pt,color=dgreen] (21.81,1.57)-- (23.43,0.05);
\draw [line width=1.3pt] (26.55,4.98)-- (25.63,4.09);
\draw [line width=1.3pt] (25.63,4.09)-- (25.94,2.85);
\draw [line width=1.3pt] (25.94,2.85)-- (27.17,2.49);
\draw [line width=1.3pt] (27.17,2.49)-- (28.09,3.38);
\draw [line width=1.3pt] (28.09,3.38)-- (27.78,4.62);
\draw [line width=1.3pt] (27.78,4.62)-- (26.55,4.98);
\draw [line width=1.3pt] (26.55,4.98)-- (27.17,2.49);
\draw [line width=1.3pt] (27.166,2.49)-- (27.78,4.62);
\draw [line width=1.3pt] (27.78,4.62)-- (25.63,4.09);
\draw [line width=1.3pt] (25.63,4.09)-- (28.09,3.38);
\draw [line width=1.3pt] (28.09,3.38)-- (26.55,4.98);
\draw [line width=1.3pt] (26.55,4.98)-- (25.94,2.85);
\draw [line width=1.3pt] (25.94,2.85)-- (27.78,4.62);
\draw [line width=1.3pt] (25.63,4.09)-- (27.17,2.49);
\draw [line width=1.3pt] (25.94,2.85)-- (28.09,3.38);
\draw [line width=1.3pt] (20.55,-1.68)-- (21.83,-1.76);
\draw [line width=1.3pt] (21.83,-1.76)-- (22.4,-2.9);
\draw [line width=1.3pt] (22.4,-2.9)-- (21.7,-3.97);
\draw [line width=1.3pt] (21.7,-3.97)-- (20.42,-3.89);
\draw [line width=1.3pt] (20.42,-3.89)-- (19.85,-2.75);
\draw [line width=1.3pt] (20.55,-1.68)-- (19.85,-2.75);
\draw [line width=1.3pt] (19.85,-2.75)-- (22.4,-2.9);
\draw [line width=1.3pt] (22.4,-2.9)-- (20.55,-1.68);
\draw [line width=1.3pt] (20.55,-1.68)-- (21.7,-3.97);
\draw [line width=1.3pt] (21.7,-3.97)-- (19.85,-2.75);
\draw [line width=1.3pt] (19.85,-2.75)-- (21.83,-1.76);
\draw [line width=1.3pt] (21.83,-1.76)-- (20.42,-3.89);
\draw [line width=1.3pt] (20.42,-3.89)-- (22.4,-2.9);
\draw [line width=1.3pt] (20.55,-1.68)-- (20.42,-3.89);
\draw [line width=1.3pt] (21.83,-1.76)-- (21.7,-3.97);
\draw [line width=1.3pt] (23.43,0.05)-- (24.2,-0.82);
\draw [line width=1.3pt] (24.2,-0.82)-- (25.66,-0.65);
\draw [line width=1.3pt] (24.31,0.99)-- (25.45,0.68);
\draw [line width=1.3pt] (23.94,2.21)-- (23.87,3.16);
\draw [line width=1.3pt] (23.87,3.16)-- (24.92,3.05);
\draw [line width=1.3pt] (24.92,3.05)-- (23.94,2.21);
\draw [line width=1.3pt] (23.87,3.16)-- (23.3,4.03);
\draw [line width=1.3pt] (24.92,3.05)-- (25.94,2.85);
\draw [line width=1.3pt] (22.64,-1.03)-- (21.83,-1.76);
\draw [line width=1.3pt] (21.81,1.57)-- (21.05,2.37);
\draw [line width=1.3pt] (21.05,2.37)-- (21.04,3.28);
\draw [line width=1.3pt] (21.05,2.37)-- (22.69,2.5);
\draw [line width=1.3pt] (22.64,-1.03)-- (22.18,0.35);
\draw [line width=1.3pt] (22.18,0.35)-- (20.74,0.62);
\draw [rotate around={-89.35:(21.04,2.83)},line width=1.3pt,color=grey] (21.04,2.83) ellipse (0.78cm and 0.2cm);
\draw [rotate around={-55.03:(23.56,3.62)},line width=1.3pt,color=grey] (23.56,3.62) ellipse (0.8cm and 0.23cm);
\draw [rotate around={9.64:(24.33,2.74)},line width=1.3pt,color=grey] (24.33,2.74) ellipse (0.94cm and 0.72cm);
\draw [rotate around={25.59:(21.74,2.16)},line width=1.3pt,color=grey] (21.74,2.16) ellipse (1.21cm and 0.77cm);
\draw [rotate around={-11.38:(21.47,0.49)},line width=1.3pt,color=grey] (21.47,0.49) ellipse (0.99cm and 0.24cm);
\draw [rotate around={-15.4:(24.86,0.84)},line width=1.3pt,color=grey] (24.86,0.84) ellipse (0.85cm and 0.3cm);
\draw [rotate around={-71.87:(22.41,-0.3)},line width=1.3pt,color=grey] (22.41,-0.3) ellipse (1cm and 0.25cm);
\draw [rotate around={-49.9:(23.81,-0.35)},line width=1.3pt,color=grey] (23.81,-0.35) ellipse (0.84cm and 0.28cm);
\draw [rotate around={7.62:(24.92,-0.7)},line width=1.3pt,color=grey] (24.92,-0.7) ellipse (0.97cm and 0.32cm);
\draw [rotate around={41.32:(22.21,-1.42)},line width=1.3pt,color=grey] (22.21,-1.42) ellipse (0.75cm and 0.26cm);
\draw [rotate around={-11.94:(25.43,2.93)},line width=1.3pt,color=grey] (25.43,2.93) ellipse (0.77cm and 0.27cm);
\draw [line width=1.3pt,color=grey] (23.06,1.28) circle (1.53cm);

\draw [rotate around={48.57:(22.22,2.03)},line width=2pt,color=blue] (22.22,2.03) ellipse (0.86cm and 0.31cm);
\draw [rotate around={-37.57:(23.93,2.28)},line width=2pt,color=blue] (23.93,2.28) ellipse (0.56cm and 0.25cm);
\draw [rotate around={-33.34:(22.21,0.32)},line width=2pt,color=blue] (22.21,0.32) ellipse (0.5cm and 0.22cm);
\draw [rotate around={27.9:(23.48,0.06)},line width=2pt,color=blue] (23.48,0.06) ellipse (0.39cm and 0.22cm);


\draw [rotate around={77.6:(24.3,1)},line width=2pt,color=blue] (24.3,1) ellipse (0.38cm and 0.23cm);

\draw [color=dgreen] (24.7,1.9) node[anchor=north west] {\huge $\tau$};
\draw [color=blue](22.7,3.5) node[anchor=north west] {\huge $\sigma$};
\begin{scriptsize}
\draw [fill=black] (18.91,3.92) circle (2.7pt);
\draw [fill=black] (19.28,5.15) circle (2.7pt);
\draw [fill=black] (20.53,5.44) circle (2.7pt);
\draw [fill=black] (21.4,4.51) circle (2.7pt);
\draw [fill=black] (21.04,3.28) circle (2.7pt);
\draw [fill=black] (19.79,2.99) circle (2.7pt);
\draw [dgreen, fill=dgreen] (22.69,2.5) circle (2.7pt);
\draw [dgreen, fill=dgreen] (23.94,2.21) circle (2.7pt);
\draw [dgreen, fill=dgreen] (23.94,2.21) circle (2.7pt);
\draw [dgreen, fill=dgreen] (23.43,0.05) circle (2.7pt);
\draw [dgreen, fill=dgreen] (22.18,0.35) circle (2.7pt);
\draw [dgreen, fill=dgreen] (21.81,1.57) circle (2.7pt);
\draw [dgreen, fill=dgreen] (24.31,0.99) circle (2.7pt);
\draw [fill=black] (20.74,0.62) circle (2.7pt);
\draw [fill=black] (26.55,4.98) circle (2.7pt);
\draw [fill=black] (25.63,4.09) circle (2.7pt);
\draw [fill=black] (25.94,2.85) circle (2.7pt);
\draw [fill=black] (27.17,2.49) circle (2.7pt);
\draw [fill=black] (28.09,3.38) circle (2.7pt);
\draw [fill=black] (27.78,4.62) circle (2.7pt);
\draw [fill=black] (20.55,-1.68) circle (2.7pt);
\draw [fill=black] (21.83,-1.76) circle (2.7pt);
\draw [fill=black] (22.4,-2.9) circle (2.7pt);
\draw [fill=black] (21.7,-4) circle (2.7pt);
\draw [fill=black] (20.42,-3.89) circle (2.7pt);
\draw [fill=black] (19.85,-2.75) circle (2.7pt);
\draw [fill=black] (24.2,-0.82) circle (2.7pt);
\draw [fill=black] (25.66,-0.65) circle (2.7pt);
\draw [fill=black] (25.45,0.68) circle (2.7pt);
\draw [fill=black] (23.87,3.16) circle (2.7pt);
\draw [fill=black] (24.92,3.05) circle (2.7pt);
\draw [fill=black] (23.3,4.03) circle (2.7pt);
\draw [fill=black] (22.64,-1.03) circle (2.7pt);
\draw [fill=black] (21.05,2.37) circle (2.7pt);
\draw [fill=black] (21.04,3.28) circle (2.7pt);
\end{scriptsize}
\end{tikzpicture}
}%

\subcaption{}
    \label{fig:ToTWithSmallParts2}
  \end{subfigure}
  \vspace{-5mm}
  \centering
  \caption{A tree-decomposition of $G$ that distinguishes all its tangles,
  and a refinement of its central essential part.}
  \label{fig:ToTWithSmallParts}
\end{figure}

For example, consider the graph $G$ in \cref{fig:ToTWithSmallParts}. It contains a central $K_6$, which induces a $3$-tangle~$\tau$ of~$G$: the set of all separations of $G$ of order at most $2$ oriented towards the central $K_6$. The star of separations $\sigma \subseteq \tau$ indicated on the left side of in the figure, which distinguishes~$\tau$ from the other $3$-tangles of $G$ (those induced by the other three~$K_6$ subgraphs), lies far away from the central $K_6$, the `essence' of $\tau$. This results in an essential part which contains a lot more vertices than those contained in the central~$K_6$. But although $\tau$ lives in that part, those other vertices do not really `belong to $\tau$'. 

In the \td\ on the right side of \cref{fig:ToTWithSmallParts} the star $\sigma$ has been moved closer to the core of $\tau$, the $K_6$ that induces it. In this decomposition of~$G$, every essential part contains only the vertices of its $K_6$, while every inessential part is small in that it contains only few vertices.
Clearly, the \td\ on the right side of \cref{fig:ToTWithSmallParts} captures the structure of $G$ in a more precise way than the \td\ on the left side does, and thus provides more information about the graph.
\medskip

Erde \cite{JoshRefining} already introduced a method to refine the essential parts of a given \td\ so that the newly arising essential parts do not contain vertices that are `inessentially separated' from the tangle living in that part. This result comes somewhat close to our goal of decreasing the size of the essential parts by refining them. 
However, it is not strong enough to decrease the size of the essential parts as much as possible, so that even in the example described above, where the tangle $\tau$ is induced by a clique, there are several vertices not in the clique which are not `inessentially separated' from $\tau$. Thus, if we apply the method from~\cite{JoshRefining} to the essential part in \cref{fig:ToTWithSmallParts1} that accommodates $\tau$, then the central essential part of the refined \td\ will still contain more vertices than just those in the clique.

In this paper we show that the essential parts of tangle-distinguishing \tds\ can be refined in the best possible way: so that the newly arising essential parts contain as few vertices as possible while being home to a tangle (see \cref{sec:ASS} for definitions): 

\begin{THM}\label{thm:GraphTanglesToTTDVersion}
    Let $(\tilde{T}, \tilde{\cV})$ be a \td\ of a graph $G$ which distinguishes all the $k$-tangles in~$G$, for some $k \in \N$ so that every separation induced by an edge of $\tilde{T}$ efficiently distinguishes some pair of $k$-tangles in $G$.  
	Then there exists a \td\ $(T, \cV)$ of~$G$ which refines $(\tilde{T}, \tilde{\cV})$ and is such that
    \begin{enumerate}[label=\rm{(\roman*)}]
        \item\label{itm:GraphTanglesToTTDiness} every star associated with an inessential part of $(T, \cV)$ is a star in $\cT_k$;
        \item\label{itm:GraphTanglesToTTDess} every essential part $V_t$ of $(T, \cV)$ is of smallest size among all the exclusive parts of \tds\ of $G$ that are home to the $k$-tangle living in $V_t$.
    \end{enumerate}
\end{THM}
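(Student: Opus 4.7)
The plan is to construct $(T, \cV)$ in two stages. In the first stage, I would apply \cref{thm:JERefining} directly to $(\tilde T, \tilde \cV)$ to obtain an intermediate refinement $(T', \cV')$ of $(\tilde T, \tilde \cV)$ whose inessential parts already correspond to stars in $\cF$; this yields property (i) at the intermediate level. The second stage then works locally: for every essential part $V_t$ of $(T', \cV')$, with $\cF$-tangle $\tau_t$ living in it, I need to refine $V_t$ by a local \td~whose unique essential subpart is a smallest possible home of $\tau_t$ while every inessential subpart still corresponds to a star in $\cF$. Gluing these local \tds~into $(T', \cV')$ produces the desired $(T, \cV)$.

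For the local refinement of $V_t$, the idea is to use separations oriented by $\tau_t$ whose small sides are inessential to $\tau_t$ in order to cut $V_t$ down towards the essence of $\tau_t$. Concretely, I would identify a maximal nested family of such $\tau_t$-oriented separations -- obtained via the standard shifting and tilting techniques for abstract separation systems, leveraging the submodularity of the order function -- together with the boundary separations of $V_t$. This nested family induces a local \td~of $V_t$ whose unique essential subpart equals the intersection of the large sides (restricted to $V_t$) of the chosen separations. By maximality of the nested family, and the fact that any home of $\tau_t$ in any \td~of $G$ must lie on the large side of every $\tau_t$-oriented separation nested with that \td, this essential subpart is of smallest possible size, yielding property (ii). For the inessential subparts of the local refinement, I would again appeal to \cref{thm:JERefining} (or its proof method) applied locally, ensuring their associated stars also lie in $\cF$.

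The principal obstacle, I expect, is simultaneously achieving (i) and (ii): shrinking $V_t$ towards a minimal home of $\tau_t$ tends to introduce many new inessential subparts, and the separations that do the shrinking need not be nested with the stars in $\cF$ that witness inessentiality. Reconciling these two families via shifting, without losing efficiency, nestedness, or the home-to-$\tau_t$ property, is the delicate technical heart of the argument. A secondary difficulty is the exchange argument underlying minimality: to rule out a strictly smaller exclusive part home to $\tau_t$ in some other \td~of $G$, I must transfer its boundary separations into my nested family, which again requires careful shifting to preserve all of the structural properties described above.
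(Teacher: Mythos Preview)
Your two-stage plan matches the paper's: first apply \cref{thm:JERefining} to handle the inessential parts, then work locally at each essential node. The gap is in your local step. Taking a \emph{maximal} nested family of $\tau_t$-oriented separations (inside the essential part) does \emph{not} force the resulting essential subpart to have minimum size among all exclusive homes of~$\tau_t$. Maximality of a nested set is with respect to inclusion, not with respect to the size of the interior it cuts out; a competing \td\ of~$G$ may realise a strictly smaller exclusive home for~$\tau_t$ using separations that all cross your maximal family, and your ``must lie on the large side'' argument does not transfer those separations into your family. So the minimality claim in~(ii) is unsupported.

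The paper reverses the logic: it \emph{starts} from an exclusive star $\sigma'\subseteq\tau_t$ whose interior already has minimum size, and then proves two things about~$\sigma'$. First (\cref{lem:StarWithMinIntNested}), by an explicit uncrossing computation on interiors, such a~$\sigma'$ can be chosen with $\sigma_t\le\sigma'$, i.e.\ nested with the boundary of the essential part. Second, among these minimum-interior stars one can be chosen so that every separation in it is \emph{closely related} to~$\tau_t$ (via \cref{prop:MinOrderSepsAreCloselyRelated}, replacing any offending separation by a minimum-order one above it and checking the interior does not grow). This ``closely related'' property is precisely the hypothesis that \cref{lem:reflem} needs to refine the new inessential nodes between~$\sigma_t$ and~$\sigma'$ into stars in~$\cF$; your proposal never isolates this property, and without it your appeal to ``\cref{thm:JERefining} applied locally'' for the new inessential subparts has no foundation, since those subparts are bounded by separations of~$\sigma'$ that need not efficiently distinguish any tangles.
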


Additionally, the \td\ $(T, \cV)$ from \cref{thm:GraphTanglesToTTDVersion} has the following three properties. First, each of its inessential parts has size $\leq 3k-3$. Second, the unique $k$-tangle $\tau$ that lives in an essential part~$V_t$ of size $> 3k-3$ is induced by $V_t$, in the sense that a separation $\{A,B\}$ of~$G$ of order less than $k$ is oriented by~$\tau$ as $(A,B)$ if and only if $\abs{A \cap V_t} < \abs{B \cap V_t}$. Third, the restriction of $\tau$ to~$G[V_t]$ is a $k$-tangle of $G[V_t]$; thus, $G[V_t]$ alone already witnesses that $\tau$ is indeed a $k$-tangle in $G$.

If a tangle $\tau$ in $G$ is induced by a separable $k$-block in $G$ (see \cref{sec:RefiningToTsInGraphs}), then the essential part of our \td\ that is home to $\tau$ will be equal to that $k$-block. Thus, the \td\ $(T, \cV)$ of~$G$ isolates all the separable $k$-blocks that induce $k$-tangles, in that they appear as parts of the \td. See \cref{thm:TreeDecompWithNiceProperties} for details. 
\medskip

We also prove a variant of \cref{thm:GraphTanglesToTTDVersion} for abstract separation systems that are not induced by a graph. For this, recall that a \td\ of a graph induces a `nested' set of separations of that graph and vice versa. 
Moreover, one can generalize the notion of a $k$-tangle to that of an $\cF$-tangle by allowing other sets $\cF$, instead of $\cT_k$, whose elements an $\cF$-tangle needs to avoid. We will in fact prove the following more general version of \cref{thm:GraphTanglesToTTDVersion} (see \cref{sec:ASS} for definitions):

\renewcommand{\thetheorem}{1.\arabic{theorem}'}
\setcounter{theorem}{1}

\begin{THM}\label{thm:GraphTanglesToTNestedSetVersion}
    Let $G$ be a graph, $k \in \N$, and let $\cF$ be a friendly set of stars in $\vS_k(G)$. 
    Further, let $\tilde{N} \subseteq S_k(G)$ be a nested set of separations that distinguishes all the $\cF$-tangles of~$S_k(G)$ so that every separation in~$\tilde{N}$ efficiently distinguishes a pair of $\cF$-tangles of $S_k(G)$. Then there exists a nested set $N \subseteq S_k(G)$ with $\tilde{N} \subseteq N$ such that
    \begin{enumerate}[label=\rm{(\roman*)}]
        \item\label{itm:GraphTanglesToTNSiness} every inessential node of $N$ is a star in $\cF$;
        \item\label{itm:GraphTanglesToTNSess} the interior of every essential node of $N$ is of smallest size among all the exclusive stars contained in the $\cF$-tangle living at that node.
    \end{enumerate}
\end{THM}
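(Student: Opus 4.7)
The plan is to derive \cref{thm:GraphTanglesToTNestedSetVersion} directly from \cref{thm:GraphTanglesToTTDVersion} via the standard bijective correspondence between nested sets of separations of a graph and its tree-decompositions. Under this correspondence, the separations in a nested set $N \subseteq S_k(G)$ correspond to the edges of the associated tree-decomposition $(T_N, \cV_N)$, while the nodes of $N$, its maximal stars, correspond to the parts of $\cV_N$.

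First I would translate the hypothesis across this correspondence. Let $(\tilde T, \tilde \cV)$ be the tree-decomposition associated with $\tilde N$. The assumption that $\tilde N$ distinguishes all $\cF$-tangles of $S_k(G)$ becomes the assumption that $(\tilde T, \tilde \cV)$ distinguishes them, and the condition that every separation in $\tilde N$ efficiently distinguishes a pair of $\cF$-tangles becomes the same condition on the separations induced by the edges of $\tilde T$. Hence $(\tilde T, \tilde \cV)$ satisfies the hypotheses of \cref{thm:GraphTanglesToTTDVersion}, yielding a refining tree-decomposition $(T, \cV)$ with both its conclusions. Let $N \subseteq S_k(G)$ be the nested set induced by $(T, \cV)$. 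Since $(T, \cV)$ refines $(\tilde T, \tilde \cV)$, every edge of $\tilde T$ remains an edge of $T$, so $\tilde N \subseteq N$.

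It remains to transfer conditions (i) and (ii) back across the correspondence. The star associated with a part $V_t$ of $(T, \cV)$ is precisely the node of $N$ sitting at $t$, and its interior equals $V_t$; moreover, a node of $N$ is (in)essential exactly when the corresponding part of $(T, \cV)$ is, and both are home to the same $\cF$-tangle. Property~(i) of \cref{thm:GraphTanglesToTTDVersion} for $(T, \cV)$ therefore yields property~(i) of \cref{thm:GraphTanglesToTNestedSetVersion} for $N$. For property~(ii), once one identifies ``exclusive part of a tree-decomposition of $G$ home to $\tau$'' with ``exclusive star of $S_k(G)$ contained in $\tau$'' under the same correspondence, the minimality statement translates verbatim. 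I expect this final compatibility check of the definitions from \cref{sec:ASS} to be the only step requiring care, but to be routine once the dictionary between tree-decompositions and nested separation sets has been written down.
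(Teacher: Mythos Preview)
Your reduction is the correct equivalence, but you are running it in the wrong direction relative to the paper's logical structure. In the paper, \cref{thm:GraphTanglesToTTDVersion} is not proved independently: it is deduced \emph{from} \cref{thm:GraphTanglesToTNestedSetVersion} by exactly the dictionary you describe (see the one-line proof of \cref{thm:GraphTanglesToTTDVersion} in \cref{sec:RefiningToTsInGraphs}). So invoking \cref{thm:GraphTanglesToTTDVersion} to prove \cref{thm:GraphTanglesToTNestedSetVersion} is circular.

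The paper's actual proof of \cref{thm:GraphTanglesToTNestedSetVersion} works directly with nested sets: first apply \cref{thm:JERefining} to $\tilde N$ to obtain a refinement $N'$ whose inessential nodes are already stars in $\cF$; then apply the new \cref{lem:ReflemEssStarsGraphs} to each essential node $\sigma$ of $N'$ to produce, inside the tangle $\tau$ living there, an exclusive star $\sigma'\ge\sigma$ of minimum interior together with an $S_k(G)$-tree over $\cF\cup\{\sigma'\}\cup\{\{\sv\}:\vs\in\sigma\}$. The substance is all in \cref{lem:ReflemEssStarsGraphs}, which in turn rests on \cref{lem:StarWithMinIntNested} (a minimum-interior exclusive star can be taken nested with $\sigma$) and \cref{prop:MinOrderSepsAreCloselyRelated} (such a star can be taken closely related to $\tau$, so that \cref{lem:reflem} applies to the new inessential nodes). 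Your plan bypasses all of this by assuming its consequence.

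If you insist on the tree-decomposition route, you would need an independent proof of \cref{thm:GraphTanglesToTTDVersion}; but any such proof will have to produce, for each essential part, a star of minimum interior nested with the given one and closely related to its tangle --- i.e.\ exactly the content of \cref{lem:StarWithMinIntNested} and \cref{prop:MinOrderSepsAreCloselyRelated}. Your ``routine dictionary check'' at the end is indeed routine, but it does not replace that work.
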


\renewcommand{\thetheorem}{\arabic{section}.\arabic{theorem}}
\setcounter{theorem}{2}

\cref{thm:GraphTanglesToTNestedSetVersion} extends verbatim to abstract separation systems except for its condition~\ref{itm:GraphTanglesToTNSess}, whose reference to `size' involves vertices. Vertices need not exist in the setting of abstract separation systems, not even in the matroid setting. The following variant of \cref{thm:GraphTanglesToTNestedSetVersion} adapts \ref{itm:GraphTanglesToTNSess} to this more general setting while leaving \ref{itm:GraphTanglesToTNSiness} unchanged. Note that the maximality in \ref{itm:GraphTanglesToTTDess} now refers to the nodes of $N$ itself, which are sets of oriented separations contained in the relevant tangle. This maximality translates to the minimality of the interior of $N$ referred to in \ref{itm:GraphTanglesToTNSess} of \cref{thm:GraphTanglesToTNestedSetVersion}, which in the abstract setting is no longer defined.

\begin{THM}\label{thm:abstracttanglesToT}
	Let $\vS$ be a submodular separation system inside some distributive universe and let 
	\[
	\cF \supseteq \big\{\{\vr, \sv\} \subseteq \vS : \vr \leq \vs \text{ and } \vr \vee \sv \text{ is co-small }\big\}
	\]
	be a friendly set of stars in $\vS$. Further, let $\tilde{N}$ be a nested set of good separations that distinguishes all the $\cF$-tangles of $S$. Then there exists a nested set $N \subseteq S$ with $\tilde{N} \subseteq N$ such that
	\begin{enumerate}[label=\rm{(\roman*)}]
        \item\label{itm:abstracttanglesToTiness} every inessential node of $N$ is a star in $\cF$;
		\item\label{itm:abstracttanglesToTess} every essential node of $N$ is maximal in the $\cF$-tangle living at it.
	\end{enumerate}
\end{THM}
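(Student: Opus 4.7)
The plan is to combine an Erde-style refinement of the inessential parts, which gives~(i), with a Zorn-type maximization argument that enlarges each essential node as far into its tangle as possible, which gives~(ii). Let $\cP$ denote the collection of all nested sets $N \supseteq \tilde{N}$ in $\vS$ with the property that every inessential node of $N$ is already a star in $\cF$, partially ordered by inclusion. To see that $\cP$ is non-empty I would first apply the abstract analog of Erde's \cref{thm:JERefining} to $\tilde{N}$, which uses only the friendliness of $\cF$, producing an initial $N_0 \in \cP$. Chains in $\cP$ have upper bounds given by their unions (nestedness passes to directed unions, and inessential nodes in a union arise from inessential nodes of some member, so remain in $\cF$), so Zorn's Lemma yields a $\subseteq$-maximal element $N \in \cP$. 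I claim $N$ satisfies both (i) and (ii).

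Property~(i) is built into $\cP$ and is thus automatic. For~(ii), suppose for contradiction that some essential node $\sigma_\tau$ of $N$ at an $\cF$-tangle $\tau$ is not maximal in $\tau$: then there is some $\vs$ with $\sigma_\tau \cup \{\vs\} \subseteq \tau$ still a star. I would first try adding $\vs$ directly. If $\vs$ is nested with every separation of $N$, then $N \cup \{\vs\}$ strictly extends $N$, and the only new inessential node is a two-element star $\{\vr, \sv\}$ with $\vr \leq \vs$ and $\vr \vee \sv$ co-small, which lies in $\cF$ by the strengthened friendliness hypothesis; this contradicts maximality of $N$. If instead $\vs$ crosses some $\vr \in N$, then submodularity of $\vS$ provides both corners $\vr \wedge \vs$ and $\vr \vee \vs$, and the tangle axioms force at least one of them to lie in $\tau$ and still extend $\sigma_\tau$ as a star (since the chosen corner is $\leq \vs$). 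Iterating this replacement along a well-ordering of $N$, combined with a transfinite-induction argument that at limit stages passes to the appropriate closure inside the distributive universe, produces a separation $\vs^* \in \tau$ that is nested with all of $N$ and still extends $\sigma_\tau$; adjoining $\vs^*$ to $N$ contradicts maximality just as before.

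The main obstacle is the corner-replacement step. One must check carefully that each replacement preserves both the membership in $\tau$ and the star-extension of $\sigma_\tau$, that the iteration converges (terminating in the finite case, or stabilizing via closures through limit stages in the general distributive setting), and that the fresh `thin' inessential nodes created between $\sigma_\tau$ and $\vs^*$ all lie in $\cF$. This last point is precisely where the stronger friendliness assumption on $\cF$ — the containment of the trivial two-element stars $\{\vr, \sv\}$ with $\vr \leq \vs$ and $\vr \vee \sv$ co-small — is used in an essential way, and it is what distinguishes \cref{thm:abstracttanglesToT} from the Erde-type \cref{thm:JERefining}.
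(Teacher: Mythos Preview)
Your Zorn-type argument has a genuine gap at the very first step, the ``easy'' case where the new separation $\vs$ is already nested with all of $N$. You assert that adjoining $\vs$ creates a single new inessential node of the form $\{\vr,\sv\}$ with $\vr \leq \vs$ and $\vr \vee \sv$ co-small, hence in $\cT' \subseteq \cF$. The co-smallness claim is simply false. Take the simplest instance: $\sigma_\tau = \{\vs_1\}$ and a larger star $\{\vr\} \subseteq \tau$ with $\vs_1 < \vr$. Adding $r$ produces the new inessential node $\{\vs_1, \rv\}$, and there is no reason whatsoever that $\vs_1 \vee \rv$ should be co-small; in a graph this would say that the ``strip'' between the separators of $\vs_1$ and $\vr$ contains no interior vertices, which is a very special situation. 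So $N \cup \{r\}$ need not lie in your poset $\cP$, and maximality of $N$ yields no contradiction.

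This is exactly the difficulty the paper spends all of Section~5 overcoming. The special set $\cT'$ is \emph{not} what absorbs the generic new inessential nodes; it is used only at the very last step. The paper's route is: (a)~find a star $\sigma' \geq \sigma_\tau$ that is \emph{closely related} to $\tau$ and \emph{near-maximal} in $\tau$ (\cref{lem:NearMaxAndClRelStar}); (b)~the inessential nodes between $\sigma_\tau$ and $\sigma'$ are then handled not by $\cT'$ but by \cref{lem:reflem}, which needs the closely-related property; (c)~only the final jump from the near-maximal $\sigma'$ to a genuinely maximal $\sigma''$ produces inessential nodes of the $\cT'$ shape, and this is by the definition of near-maximal. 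Producing a closely-related near-maximal star is the technical heart (unscrambling, \cref{lem:unscrambling,lem:NarrowStaysNarrow}), and it is where distributivity of the universe actually enters. Your corner-replacement iteration does not address either the closely-related requirement or the near-maximality, so even if it converged it would not put you in a position to invoke \cref{lem:reflem} on the resulting inessential nodes.

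A secondary issue: your Zorn step is also shaky, since for infinite $S$ a node of $\bigcup N_i$ need not be a node of any $N_i$, so the inessential-nodes-in-$\cF$ condition does not obviously pass to unions of chains. But the first gap above is already fatal.
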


\noindent 
\cref{thm:abstracttanglesToT} requires that every separation in~$\tilde{N}$ is good rather than that it efficiently distinguishes two tangles as in \cref{thm:GraphTanglesToTTDVersion}.
This is a necessary adjustment as the notion of `efficiency' does not extend to abstract separation systems. In separation systems induced by a graph these two properties agree.

Condition \ref{itm:abstracttanglesToTess} in \cref{thm:abstracttanglesToT} implies that one cannot refine the nested set~$N$ from \cref{thm:abstracttanglesToT} any further, so~$N$ is optimal as claimed.
We remark that by \cite[Theorem~5.8]{SARefiningInessParts} there always exists a nested set~$\tilde{N}$ which satisfies the premise of \cref{thm:abstracttanglesToT}.
\medskip

This paper is organised as follows. In \cref{sec:ASS} we give a short introduction to abstract separation systems. We then recall the definitions and lemmas from \cite{SARefiningInessParts} about refining inessential stars in \cref{sec:RefiningInessStars}. In \cref{sec:RefiningToTsInGraphs} and \ref{sec:RefiningEssentialStars} we prove \cref{thm:GraphTanglesToTTDVersion} and \ref{thm:abstracttanglesToT}, respectively.

\section{Abstract separation systems and profiles}\label{sec:ASS}

In this section we give a short overview of the important objects, definitions and theorems that we are going to use later. For a more detailed introduction to abstract separation systems and their tangles we refer the reader to \cite{AbstractTangles} and \cite{ASS}.
The definitions are taken from \cite{ASS,TreeSets,AbstractTangles,ProfilesNew,TangleTreeAbstract}. We use the basic graph-theoretic notions from \cite{DiestelBook16noEE}. In particular, we may speak of a vertex $v \in G$ (rather than $v \in V(G)$).

\subsection{Separation systems}\label{subsec:SepSys}

A \emph{separation} of a graph $G$ is a set $\{A, B\}$ of subsets of $V(G)$ such that $A \cup B = V(G)$ and there is no edge in $G$ between $A\setminus B$ and $B \setminus A$. 
The ordered pairs $(A, B)$ and $(B,A)$ of a separation $\{A, B\}$ are its \emph{orientations}. 

The set $\vU(G)$ of all oriented separations of $G$ is partially ordered by
$(A,B) \leq (C,D)$ if $A \subseteq C$ and~$D \subseteq B$.
Inverting the orientations of a separation $\{A,B\}$ is an involution $(A,B) \mapsto (A,B)^\ast := (B,A)$ on~$\vU(G)$ which reverses the partial order, i.e.\ $(A,B) \leq (C,D) \iff (B,A) \geq (D,C)$. 

Any two separations $(A,B), (C,D)$ of $G$ have an infimum $(A \cap C, B \cup D)$ and a supremum $(A \cup C, B \cap D)$ in $\vU(G)$. Hence, $\vU(G)$ forms a lattice together with the operations $(A,B) \wedge (C,D) := (A \cap C, B \cup D)$ and $(A,B) \vee (C,D) := (A \cup C, B \cap D)$.

The number $\abs{A \cap B}$ is the \emph{order} of the separation $\{A,B\}$. It is easy two check that the orders of the infimum and supremum of any two separations $(A,B), (C,D)$ of $G$ sum up to
\[
\abs{(A,B) \vee (C,D)} + \abs{(A,B) \wedge (C,D)} = \abs{(A,B)} + \abs{(C,D)}.
\]
The important part of this equality is that the left side is never larger than the right side, which is called \emph{submodularity}.
\bigskip

A \emph{separation system} $\vS = (\vS, \leq, ^*)$ is a partially ordered set $\vS$ with an order-reversing involution $^*$, i.e.\ if $\vs \leq \vr \in \vS$, then $(\vs)^* \geq (\vr)^*$. We denote $(\vs)^*$ by $\sv$ and $(\sv)^*$ by $\vs$. The elements $\vs, \sv \in \vS$ are called \emph{oriented separations}. An \emph{unoriented} separation is a set $s := \{\vs, \sv\}$ for some $\vs \in \vS$, and $\vs$, $\sv$ are the \emph{orientations} of~$s$. Note that there are no default orientations: Once we denoted one orientation by~$\vs$ the other one will be~$\sv$, and vice versa. The set of all sets $\{\vs, \sv\} \subseteq \vS$ is denoted by~$S$. We will use terms defined for unoriented separations also for oriented ones and vice versa if that is possible without causing ambiguities. Moreover, if the context is clear, we will simply refer to both oriented and unoriented separations as `separations'.

Two unoriented separations $r, s \in S$ are \emph{nested} if they have orientations that can be compared using $\leq$, otherwise they \emph{cross}. 
If $r, s \in S$ are nested, then both $r$ and $\vr$ are \emph{nested} with both of $s, \vs$. Analogously, if $r, s \in S$ cross, then both $r, \vr$ \emph{cross} both of $s, \vs$.
A set of separations is \emph{nested} if all its elements are pairwise nested.
Two sets $R, R'$ of separations are \emph{nested} if every element of $R$ is nested with every element of $R'$.

If a separation $s \in S$ satisfies $\vs = \sv$ we call $s$ \emph{degenerate}. 
A separation $\vs \in \vS$ is called \emph{small} and $\sv$ \emph{co-small} if $\vs \leq \sv$. In the case of separations of a graph $G$ these are exactly the separations of the form $(A,V(G))$.
A separation $\vs \in \vS$ is \emph{trivial} in $\vS$ if there exists a separation $\vr \in \vS$ such that $\vs < \vr$ and $\vs < \rv$. Note that trivial separations are small. Conversely, all but the largest small separations (with respect to~$\leq$) are trivial.
A set $\cF \subseteq 2^{\vS}$ of subsets of $\vS$ is \emph{standard (for $\vS$)} if~$\{\rv\} \in \cF$ for every trivial separation $\vr \in \vS$.

A nested set $\sigma \subseteq \vS$ without degenerate separations is called a \emph{star} if for any $\vr, \vs \in \sigma$ it holds that $\vr \leq \sv$. If $G$ is a graph and $\sigma \subseteq \vU(G)$ a star, then $\interior(\sigma) := \bigcap_{(A,B) \in \sigma} B$ is the \emph{interior of $\sigma$}.

A star $\sigma$ is called \emph{proper} if for every distinct $\vr, \vs \in \sigma$ the relation $\vr \leq \sv$ is the only one, i.e.\ $\vr \not\leq \vs$, $\vr \not\geq \vs$ and $\vr \not\geq \sv$. 
A star $\sigma$ is \emph{proper in $\vS$} if $\sigma$ is proper and $\sigma \neq \{\sv\}$ for a trivial separation $\vs \in \vS$.

The partial order on $\vS$ also relates the proper stars in $\vS$: if $\sigma, \tau \subseteq \vS$ are proper stars, then $\sigma \leq \tau$ if and only if for every $\vs \in \sigma$ there exists some $\vr \in \tau$ such that $\vs \leq \vr$. 
Note that this relation is again a partial order \cite{TreeSets}.
A proper star in $\vS$ is \emph{maximal} if it is a maximal element in the set of all proper stars in $\vS$ with the partial order defined above.

We call a separation system $(\vU, \leq, ^*)$ a \emph{universe} of separations, and denote it with $\vU = (\vU, \leq, ^*, \wedge, \vee)$, if it is a lattice, i.e.\ if any two separations $\vr, \vs \in \vU$ have a \emph{supremum} $\vr \vee \vs$ and an \emph{infimum} $\vr \wedge \vs$ in $\vU$.

A universe $\vU$ of separations is \emph{distributive} if it is distributive as a lattice, i.e.\ if 
\[
\vr \vee (\vs \wedge \vt) = (\vr \vee \vs) \wedge (\vr \vee \vt) \text{ and } \vr \wedge (\vs \vee \vt) = (\vr \wedge \vs) \vee (\vr \wedge \vt)
\]
for all $\vr, \vs, \vt \in \vU$.

Given two separations $r, s \in U$ we call the four separations $\{(\vr \vee \vs), (\vr \vee \vs)^*\}$, $\{(\vr \wedge \vs)$, $(\vr \wedge \vs)^*\}$, $\{(\rv \vee \vs), (\rv \vee \vs)^*\}$ and $\{(\rv \wedge \vs), (\rv \wedge \vs)^*\}$ their \emph{corner separations}. A simple but quite useful observation about corner separations is the following:

\begin{LEM}{\cite[Lemma~3.2]{ASS}} \label{lem:Fischlemma}
	Let $r, s \in S$ be two crossing separations. Every separation
	$t$ that is nested with both $r$ and $s$ is also nested with all four corner separations of $r$ and $s$.
\end{LEM}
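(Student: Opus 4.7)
The plan is a straightforward case analysis. The statement, the set of corner separations, and the crossing hypothesis are all symmetric under the flips $\vr \leftrightarrow \rv$, $\vs \leftrightarrow \sv$, and $\vt \leftrightarrow \tv$, so by relabelling orientations I may assume that the nestedness of $t$ with $r$ is witnessed by $\vt \leq \vr$.

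With $\vt \leq \vr$ fixed, the nestedness of $t$ with $s$ leaves exactly four possibilities: $\vt \leq \vs$, $\vs \leq \vt$, $\vt \leq \sv$, or $\sv \leq \vt$. In the second case transitivity gives $\vs \leq \vr$, and in the fourth it gives $\sv \leq \vr$; both contradict the hypothesis that $r$ and $s$ cross. So it suffices to handle the two remaining configurations.

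In either remaining configuration I would verify nestedness of $t$ with each of the four corner separations $\{\vr \vee \vs, \rv \wedge \sv\}$, $\{\vr \wedge \vs, \rv \vee \sv\}$, $\{\rv \vee \vs, \vr \wedge \sv\}$, $\{\rv \wedge \vs, \vr \vee \sv\}$ directly, by combining the two assumed inequalities with the defining relations $\vr \wedge \vs \leq \vr$, $\vr \wedge \vs \leq \vs$, $\vr \leq \vr \vee \vs$, $\vs \leq \vr \vee \vs$ (and their flipped analogues). For example, when $\vt \leq \vr$ and $\vt \leq \vs$, transitivity yields $\vt \leq \vr \wedge \vs$, $\vt \leq \vr \vee \vs$, $\vt \leq \vr \vee \sv$, and $\vt \leq \rv \vee \vs$ in one stroke, so that $\vt$ is comparable to one orientation of each of the four corners. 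The case $\vt \leq \vr$ and $\vt \leq \sv$ is entirely analogous.

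I do not anticipate any real obstacle: no lattice property beyond transitivity and the very definitions of $\wedge$ and $\vee$ is needed, and in particular distributivity plays no role. The only item worth being deliberate about is invoking the three symmetries at the outset, so that the sixteen a priori nesting configurations collapse to the four considered above.
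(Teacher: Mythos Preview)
Your argument is correct. The symmetry reduction is valid, the elimination of the two ``bad'' cases via transitivity is sound, and in the remaining case $\vt\le\vr$, $\vt\le\vs$ the single inequality $\vt\le\vr\wedge\vs$ together with $\vt\le\vr\le\vr\vee\vs$, $\vt\le\vr\le\vr\vee\sv$, $\vt\le\vs\le\rv\vee\vs$ indeed exhibits a comparison of $\vt$ with one orientation of each of the four corners.

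There is nothing to compare against in this paper: the lemma is quoted from~\cite{ASS} without proof. The argument given there is exactly the symmetry-plus-transitivity case check you outline, so your proposal matches the original source as well.
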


A function $\vert\! \cdot\! \vert: \vU \rightarrow \R_{\geq 0}$ is called an \emph{order function} of $\vU$ if $\abs{\vs} = \abs{\sv}$ for all $s \in U$. If $\vU$ comes with an order function that is \emph{submodular}, i.e.\
\[
\abs{\vr \vee \vs} + \abs{\vr \wedge \vs} \leq \abs{\vr} + \abs{\vs} \text{ for all } \vr, \vs \in \vU
\]
we call $\vU$ a \emph{submodular} universe. Then $\abs{\vs}$ is the \emph{order} of $\vs$ and $s$. For an integer $k > 0$, the induced sets~$\vS_k := \{\vs \in \vU: \abs{\vs} < k\}$ are separation systems on their own. Though they need not be universes, as in general $\vr \vee \vs$ and $\vr \wedge \vs$ for two separations $\vr, \vs \in \vS_k$ need not both lie in $\vS_k$. Note that we take the infimum and supremum here with respect to $\vU$. However, by the submodularity of $\vert\! \cdot\! \vert$, at least one of them has to be contained in~$\vS_k$.

This property is rather useful and motivates the following generalization of submodularity which can be applied to universes without making use of the external concept of an order function:

A separation system $\vS$ is \emph{inside} a universe $\vU$ if $\vS \subseteq \vU$ and the partial order and the involution on $\vS$ are the ones induced by $\vU$. 
Further, a separation system $\vS$ inside some universe $\vU$ is called \emph{submodular} if
\[
\text{for all } \vr, \vs \in \vS : \vr \vee \vs \in \vS \text{ or } \vr \wedge \vs \in \vS.
\]

\subsection{Profiles and tangles}\label{sec:profiles}

An \emph{orientation} of a set $S$ of unoriented separations is a set $O \subseteq \vS$ which contains every degenerate separation from $\vS$, and exactly one orientation~$\vs$ or~$\sv$ of every non-degenerate separation in $S$. A subset $O \subseteq \vS$ is \emph{consistent} if it does not contain both~$\rv$ and~$\vs$ whenever $\vr < \vs$ for distinct $r, s \in S$. 
If $\cO$ is a set of consistent orientations of $\vS$, then we call a star~$\sigma \subseteq \vS$ \emph{essential (for $\cO$)} if $\sigma \subseteq O$ for some $O \in \cO$. Otherwise~$\sigma$ is called \emph{inessential (for~$\cO$)}.

A star $\sigma \subseteq \vS$ is \emph{exclusive} for some set $\cO$ of consistent orientations of $S$ if it is contained in exactly one orientation in $\cO$. If $O \in \cO$ is that orientation, we say that $\sigma$ is \emph{$O$-exclusive (for $\cO$)}.

A non-degenerate separation $s \in S$ \emph{distinguishes} two orientations $O_1$ and $O_2$ of $S$ if $O_1$ and $O_2$ orient $s$ differently.
A set of separations $N \subseteq S$ \emph{distinguishes} a set $\cO$ of orientations if any two distinct orientations in $\cO$ are distinguished by some separation in $N$.
If $S$ lies inside a submodular universe, a separation distinguishes a pair of orientations $O_1$ and $O_2$ of $S$ \emph{efficiently} if it distinguishes them and $O_1$ and $O_2$ cannot be distinguished by any separation of lower order.

Let $\cF \subseteq 2^{\vS}$ be a set of subsets of $\vS$. An orientation $P$ of $S$ is an \emph{$\cF$-tangle (of $S$)} if $P$ is consistent and does not contain any element of~$\cF$ as a subset.

If $\vS$ is a separation system inside some universe $\vU$, we call a consistent orientation $P$ of $S$ a \emph{profile of~$S$} if it satisfies that
\[
\text{for all $\vr, \vs \in P$ the separation $(\vr \vee \vs)^*$ does not lie in $P$.}
\]

\noindent A profile of $S$ which contains no co-small separation is \emph{regular}. For the special case of separations of a graph $G$, a \emph{$k$-profile} is a profile of $S_k(G)$.

We can describe the profiles of $S$ as $\cF$-tangles as follows. Set $\cP_S := \{\{\vr, \vs, (\vr \vee \vs)^*\} : \vr, \vs \in \vS\} \cap 2^{\vS}$.
Then the set of all $\cP_S$-tangles of $S$ is exactly the set of all profiles of $S$. We say that $\cF$ is \emph{profile-respecting (for~$\vS$)} if every $\cF$-tangle of $S$ is a profile of~$S$.

The profiles which we consider in this paper will typically be $\cF$-tangles for a standard and profile-respecting set of stars which contains $\{\rv\}$ for every small $\vr \in \vS$, and thus these profiles will be regular.

For the special case of separations of a graph $G$, a \emph{$k$-tangle in $G$} is a $\cT_k$-tangle of $S_k(G)$ where
\[
\cT_k := \{\{(A_1, B_1), (A_2, B_2), (A_3, B_3)\} \subseteq \vS_k(G): G[A_1] \cup G[A_2] \cup G[A_3] = G\}.
\]

\subsection{Tree sets, S-trees and \tds}

A \emph{tree set} is a nested separation system $N$ without degenerate or trivial separations. 
A star $\sigma \subseteq \vN$ is called a \emph{node of} $N$ if there is a consistent orientation $O$ of $N$ such that $\sigma$ is the set of maximal elements in $O$. (In \cite{TreeSets} the nodes of $N$ are called `splitting stars'.) If $\vS$ is a separation system, and if $N \subseteq S$ consists only of separations which each distinguish some pair of consistent orientations of~$S$, then $(\vN, \leq, ^*)$ is a tree set where $\leq$ and $^*$ are the ones induced by $\vS$. Since all relevant nested sets in this paper will be of this form, we will use the term `node' also for nested sets.

Let $\vS$ be again a separation system. An \emph{$S$-tree} is a pair $(T,\alpha)$ of a (graph-theoretical) tree $T$ and a map~$\alpha: \vE(T) \rightarrow \vS$ from the oriented edges $\vE(T)$ of $T$ to $\vS$
such that $\alpha(\ev) = \sv$ if $\alpha(\ve) = \vs$. 
If $x \in V(T)$ is a leaf of $T$ and $t \in V(T)$ its unique neighbour, then we call $\alpha(x,t) \in \vS$ a \emph{leaf separation (of $T$)}.

An $S$-tree $(T, \alpha)$ is \emph{over} a set $\cF \subseteq 2^{\vS}$ if, for every node $t \in T$, we have $\{\alpha(t',t) : (t',t) \in \vE(T)\} \in \cF$. If~$\cF$ is a set of stars in $\vS$, then we call $\sigma_t := \{\alpha(t',t) : (t',t) \in \vE(T)\} \subseteq \vS$ for a node $t \in T$ the \emph{star associated with $t$ (in $T$)}.

Every $S$-tree over a set of stars \emph{induces} a tree set $\vN := \text{im}(\alpha)$ via $\alpha$ if no $\alpha(e)$ is trivial or degenerate for some edge $e \in \vE(T)$.
Conversely, if $N$ is a \emph{regular} tree set in $S$, i.e.\ $\vN$ does not contain any small separations, then one can obtain an $S$-tree $(T,\alpha)$ as follows. We take the set of all nodes of $\vN$ as the vertex set of $T$ and $N$ as the edges of $T$ where we let a separation $s \in N$ be incident to the two nodes of $\vN$ that contain $\vs$ and $\sv$, respectively.
\begin{THM}{\cite[Theorem~6.9]{TreeSets}} \label{thm:FromTreeSetsToSTrees}
    Let $\vS$ be a separation system and $N \subseteq S$ a regular tree set. Then there exists an $S$-tree $(T, \alpha)$ with $\text{\emph{im}}(\alpha) = \vN$ such that the stars associated with nodes of $T$ are precisely the nodes of $\vN$.
\end{THM}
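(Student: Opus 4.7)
The plan is to realise the construction outlined in the paragraph preceding the statement. Take the set of nodes of $\vN$ as $V(T)$, let each $s \in N$ be an edge of $T$ joining the two nodes of $\vN$ that contain $\vs$ and $\sv$ respectively, and define $\alpha : \vE(T) \to \vS$ by $\alpha(\sigma', \sigma) := \vs$ whenever $\vs \in \sigma$ and $\sv \in \sigma'$. To see that this is well-defined, it suffices to show that every $\vs \in \vN$ lies in exactly one node of $\vN$: existence follows because $\vN$ contains no small separations (by regularity) and no trivial separations (since $N$ is a tree set), so $\vs$ can be made maximal in some consistent orientation of $N$; uniqueness follows from nestedness. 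The two nodes containing $\vs$ and $\sv$ are distinct because $s$ is non-degenerate.

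The main task is to prove that $T$ is a tree. For \emph{connectedness}, given distinct nodes $\sigma, \tau$ with corresponding consistent orientations $O_\sigma, O_\tau$, pick a $\leq$-maximal $\sv \in O_\tau \setminus O_\sigma$ (using Zorn's lemma if $N$ is infinite). This $\sv$ is maximal in all of $O_\tau$: any $\vr \in O_\tau$ with $\sv < \vr$ would, by consistency of $O_\sigma$ applied to $\rv < \vs \in O_\sigma$, satisfy $\rv \in O_\sigma$, hence $\vr \in O_\tau \setminus O_\sigma$, contradicting the maximality of $\sv$. So $\sv \in \tau$, giving an edge from $\tau$ to the node $\tau'$ obtained by flipping $\sv$ to $\vs$ in $O_\tau$; iterating yields a walk from $\tau$ to $\sigma$. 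For \emph{acyclicity}, a cycle $\sigma_0, s_1, \sigma_1, \ldots, s_n, \sigma_n = \sigma_0$ (with the convention $\vs_i \in \sigma_i$) would, by the star property at each $\sigma_i$ applied to $\vs_i, \sv_{i+1} \in \sigma_i$, force $\vs_i \leq \vs_{i+1}$ for all $i$, and the star property at $\sigma_0$ applied to $\sv_1, \vs_n \in \sigma_0$ gives $\vs_n \leq \vs_1$; hence $\vs_1 = \cdots = \vs_n$, contradicting the distinctness of the cycle's edges.

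Finally, $\im(\alpha) = \vN$ is immediate from the construction, and for any vertex $\sigma$ of $T$ the associated star $\{\alpha(\sigma', \sigma) : (\sigma', \sigma) \in \vE(T)\}$ equals $\{\vs \in \vN : \vs \in \sigma\} = \sigma$, the node of $\vN$ itself; conversely, every node of $\vN$ arises as a vertex of $T$ by construction. I expect the acyclicity argument to be the main obstacle: in the abstract setting there is no ambient geometric tree to rely on, so the contradiction must be extracted purely from the partial order and the star conditions via careful bookkeeping of orientations around the hypothetical cycle.
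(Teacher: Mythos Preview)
The paper does not prove this theorem at all; it is quoted verbatim from \cite{TreeSets}*{Theorem~6.9} and used as a black box. So there is nothing in the paper to compare your argument against beyond the one-paragraph construction sketch preceding the statement, which your proposal follows exactly.

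That said, your argument is essentially correct for finite~$N$, which is the only case the present paper needs. A few remarks. Your uniqueness claim (``each $\vs$ lies in exactly one node'') is right, and indeed follows from nestedness by a four-case check: if $\vs$ were maximal in two consistent orientations $O_1 \neq O_2$, pick $r$ with $\vr \in O_1$ and $\rv \in O_2$; then each of the relations $\vr \leq \vs$, $\rv \leq \vs$, $\vs \leq \vr$, $\vs \leq \rv$ contradicts either consistency of $O_1$, consistency of $O_2$, or maximality of $\vs$ in one of them. Your acyclicity argument is clean and is in fact the \emph{easy} part, contrary to your closing expectation.

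The genuine subtlety is in connectedness. Your ``iterate and flip'' step reduces $O_\tau \setminus O_\sigma$ by one element each time, which terminates when $N$ is finite but not obviously otherwise; invoking Zorn only gives you a maximal element to flip, not termination of the sequence of flips. For arbitrary infinite regular tree sets the graph you build can fail to be connected (think of an $\omega$-chain $\vs_1 < \vs_2 < \cdots$ together with an upper bound $\vt$: the nodes $\{\tv\}$ and $\{\vt\}$ form a component disjoint from the ray of nodes $\{\vs_k,\sv_{k+1}\}$). The version in \cite{TreeSets} handles this with additional care about which orientations count as splitting, so if you want a proof valid in full generality you should consult that source; for the finite setting relevant here, your argument is complete.
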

\noindent This motivates the name `nodes' for the splitting stars of $N$. It is shown in \cite{TreeSets} that the $S$-tree from \cref{thm:FromTreeSetsToSTrees} is unique up to isomorphisms.
Therefore, we say that $N$ \emph{induces} the $S$-tree $(T, \alpha)$.

We say that a consistent orientation $O$ \emph{lives} at a node $\sigma$ of $N$ (or equivalently $\sigma$ \emph{is home} to $O$) if $\sigma \subseteq O$. Similarly, we say that $O$ lives at a node $t$ of an $S$-tree $(T, \alpha)$ if $\sigma_t \subseteq O$.
It is easy to see that every consistent orientation of $S$ has to live at a (unique) node of any regular tree set $N$.

Given a set $\cO$ of consistent orientations of $S$ and an $S$-tree $(T, \alpha)$, we call a node $t \in T$ \emph{essential (for~$\cO$)} if there is an orientation in $\cO$ which lives at $t$ and otherwise \emph{inessential (for~$\cO$)}.
\medskip

A \emph{\td}~of a graph $G$ is a pair $(T, \cV)$ of a tree $T$ together with a family $\cV = (V_t)_{t \in T}$ of subsets of $V(G)$ such that $\bigcup_{t\in T} G[V_t] = G$, and such that for every vertex~$v \in G$, the graph~$T[\{t\in T : v\in V_t\}]$ is connected. The sets~$V_t \in \cV$ are called the \emph{parts} of $(T, \cV)$. 
The number $\max\{\abs{V_t \cap V_{t'}} : \{t,t'\} \in E(T)\}$ is the \emph{adhesion} of $(T, \cV)$.

Every edge $(t_1, t_2) \in \vE(T)$ \emph{induces} a separation $\vs_{(t_1,t_2)} := (\bigcup_{t \in T_1} V_t, \bigcup_{t \in T_{2}} V_t)$ of $G$ where $T_1 \ni t_1$ and~$T_{2} \ni t_2$ are the two components of $T-(t_1,t_2)$ \cite{DiestelBook16noEE}.
It is easy to check that the set of separations induced by the edges of a \td\ is nested. Conversely, it follows from \cref{thm:FromTreeSetsToSTrees} that every regular nested set $N$ of separations of a graph $G$ is induced by the edges of some \td\ $(T, \cV)$ of~$G$. In fact, if we require the tree~$T$ to be minimal, then there is a unique such \td\ (up to isomorphisms). We say that $N$ \emph{induces} this \td. In particular, if $(T, \cV)$ is induced by~$N$, then the \emph{stars} $\sigma_t := \{\vs_{(u,t)} : (u, t) \in \vE(T)\}$ \emph{associated with the parts $V_t$} of $(T, \cV)$ are precisely the nodes of~$N$.

Similar as to nested sets, we say that a consistent orientation $O$ of $S_k(G)$, for some $k \in \N$, \emph{lives} in a part $V_t \in \cV$ if~$\sigma_t \subseteq O$. Further, given a set $\cO$ of consistent orientations of $S_k(G)$, we call $V_t$ \emph{essential (for~$\cO$)} if there is an orientation in $\cO$ that lives in $V_t$ and otherwise \emph{inessential (for~$\cO$)}. 

Given two \tds\ $(\tilde{T}, \tilde{\cV})$ and $(T, \cV)$ of $G$, we say that $(T, \cV)$ \emph{refines} $(\tilde{T}, \tilde{\cV})$ if the set $N$ of all the separations induced by the edges of $T$ \emph{refines} the set $\tilde{N}$ induced by the edges of $\tilde{T}$ in that $\tilde{N} \subseteq N$.

\subsection{Friendly sets of stars}

Let $\vS$ be a separation system inside some universe, and let~$\vr$ be a separation in $\vS$ which is neither degenerate nor trivial, and set $S_{\geq \vr} := \{x \in S : \vx \geq \vr \text{ or } \xv \geq \vr\}$. We say that \emph{$\vs$ emulates $\vr$ in $\vS$} if $\vs \geq \vr$ 
and for every $\vx \in \vS$ with $\vx \geq \vr$ we have $\vs \vee \vx \in \vS$. 
We can then define a function
\[
f\!\downarrow_{\vs}^{\vr}: \vS_{\geq \vr} \setminus \{\rv\} \rightarrow \vS_{\geq \vr} \setminus \{\rv\} \text{ by } f\!\downarrow_{\vs}^{\vr}(\vx) := \vx \vee \vs \text { and } f\!\downarrow_{\vs}^{\vr}(\xv) := (\vx \vee \vs)^* \text{ for } \vr \leq \vx.
\]
If $(T,\alpha)$ is an $S$-tree and $\vs$ emulates $\vr$, we can define $\alpha' := \shiftingfct{\vs}{\vr} \circ \alpha$. It is then easy to see that $(T, \alpha')$ is again an $S$-tree, and we call $(T, \alpha')$ the \emph{shift of $(T, \alpha)$ onto $\vs$}.

Given a set of stars $\cF \subseteq 2^{\vS}$ we say that \emph{$\vs$ emulates $\vr$ in $\vS$ for $\cF$} if $\vs$ emulates $\vr$ in $\vS$ and for every star~$\sigma \in \cF$ with $\sigma \subseteq \vS_{\geq \vr} \setminus \{\rv\}$ that contains an element $\vt \geq \vr$ it holds that $f\!\downarrow_{\vs}^{\vr}(\sigma) \in \cF$.

A set $\cF$ of stars in $\vS$ is \emph{closed under shifting} if whenever $\vs \in \vS$ emulates some $\vr \in \vS$, then it also emulates $\vr$ in $\vS$ for $\cF$.
Further, a set $\cF$ of stars in $\vS$ is \emph{friendly} if $\cF$ is standard, profile-respecting, closed under shifting and contains $\{\rv\}$ for every small separation $\vr \in \vS$.

The condition that a set of stars is friendly might seem to be rather strong at first but in practice it can typically be satisfied.
Diestel, Eberenz and Erde \cite{ProfileDuality} showed that any set $\cF \subseteq 2^{\vS}$ can be transformed into a standard set $\hat{\cF}$ of stars which is closed under shifting so that an orientation of $S$ is a regular $\hat{\cF}$-tangle if and only if it is a regular $\cF$-tangle.

\begin{LEM}{\cite[Lemmas 11 \& 14]{ProfileDuality}} \label{lem:MakingASetFFriendly}
    Let $\vS$ be a submodular separation system inside some universe and let $\cF \subseteq 2^{\vS}$ be a standard set. Then there exists a standard set of stars $\hat{\cF} \subseteq 2^{\vS}$ that is closed under shifting such that an orientation $\cO$ of $S$ is a regular $\cF$-tangle if and only if it is a regular $\hat{\cF}$-tangle.
\end{LEM}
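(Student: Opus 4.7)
The plan is to construct $\hat{\cF}$ in two successive stages, corresponding to the two cited results from \cite{ProfileDuality}. First, I replace each non-star element of $\cF$ by stars through a submodular \emph{uncrossing} procedure, producing a standard set of stars $\cF^*$ with the same regular tangles as $\cF$. Second, I close $\cF^*$ under shifting to obtain the desired $\hat{\cF}$. The soundness of both stages hinges on the profile property enjoyed by regular tangles.

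For the first stage, I process each $F \in \cF$ containing a crossing pair $\vr, \vs$. By submodularity of $\vS$, at least one of $\vr \vee \vs$ or $\vr \wedge \vs$ lies in $\vS$; say $\vr \vee \vs \in \vS$. Replace $F$ by $F' := (F \setminus \{\vr, \vs\}) \cup \{\vr \vee \vs\}$. A regular profile $O$ contains $F$ if and only if it contains $F'$: if $\vr, \vs \in O$ then the profile condition $(\vr \vee \vs)^* \notin O$ forces $\vr \vee \vs \in O$; conversely, $\vr \vee \vs \in O$ combined with consistency ($\vr, \vs \leq \vr \vee \vs$) forces $\rv, \sv \notin O$, hence $\vr, \vs \in O$. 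By \cref{lem:Fischlemma}, the new element is nested with every element of $F$ that was nested with both $\vr$ and $\vs$, so a suitable well-founded ``crossing count'' strictly decreases and iteration terminates in a set $\cF^*$ of stars. Standardness is preserved because trivial singletons $\{\rv\}$ contain no crossings to resolve. The case where only $\vr \wedge \vs$ lies in $\vS$ is handled symmetrically using the corresponding corner separation.

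For the second stage, I build $\hat{\cF}$ by transfinite iteration: set $\cF_0 := \cF^*$ and, given $\cF_i$, let $\cF_{i+1}$ consist of $\cF_i$ together with all shifts $\shiftingfct{\vs}{\vr}(\sigma)$ arising from $\sigma \in \cF_i$ and pairs $\vr \in \sigma$, $\vs \in \vS$ with $\vs$ emulating $\vr$ in $\vS$; unions are taken at limit stages until stabilisation. Since shifting sends proper stars to proper stars and does not create new singletons of the form $\{\rv\}$ for trivial $\vr$, $\hat{\cF}$ is a standard set of stars, closed under shifting by construction. To see that $\hat{\cF}$ has the same regular tangles as $\cF^*$, I show by induction on the stage at which a star appears that no regular $\cF^*$-tangle $O$ contains any $\sigma' \in \hat{\cF}$. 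The inductive step is the \emph{shift lemma}: if $\sigma' = \shiftingfct{\vs}{\vr}(\sigma) \subseteq O$ and $\sigma$ appeared at an earlier stage, then $\sigma \subseteq O$, contradicting the inductive hypothesis. Indeed, $\vs \in O$ together with $\vs \geq \vr$ yields $\vr \in O$ by consistency, and for each $\vt \in \sigma \setminus \{\vr\}$ the element $(\tv \vee \vs)^* \in \sigma' \subseteq O$ combined with $\vs \in O$ forces $\tv \notin O$ by the profile condition, hence $\vt \in O$.

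The main obstacle is isolating the shift lemma and correctly dovetailing the profile and consistency conditions; this is where regularity of the tangle is essentially used, as a non-regular orientation might contain a shifted star without containing the original. A secondary technical point is ensuring the transfinite iteration of Stage 2 genuinely stabilises—this follows because there are only set-many stars in $\vS$—and that shifts of shifts remain within $\hat{\cF}$, which is automatic from the iterative definition. The uncrossing stage is routine once the correct decreasing invariant is identified, and \cref{lem:Fischlemma} ensures that corner separations slot cleanly into the surrounding stars.
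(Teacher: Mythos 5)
This lemma is quoted from \cite{ProfileDuality}; the paper gives no proof of it, so there is no in-paper argument to compare yours against and I can only assess your proposal on its own terms. There is a genuine gap, and you flag its location yourself when you write that ``the soundness of both stages hinges on the profile property enjoyed by regular tangles'': regular $\cF$-tangles of a general \emph{standard} set $\cF$ do \emph{not} enjoy the profile property. An $\cF$-tangle is merely a consistent orientation of $S$ avoiding $\cF$; it is a profile only when $\cF$ is profile-respecting, which the lemma does not assume, and regularity only excludes co-small separations — it does not prevent $O$ from containing $\vr$, $\vs$ and $(\vr\vee\vs)^*$ simultaneously. Consistency alone gives you only one direction of each of your key equivalences: in Stage~1 it yields $F'\subseteq O\Rightarrow F\subseteq O$ (via $\vr,\vs\leq\vr\vee\vs$), but not $F\subseteq O\Rightarrow \vr\vee\vs\in O$; and in Stage~2 the step deducing $\tv\notin O$ from $(\tv\vee\vs)^*\in O$ and $\vs\in O$ is again exactly the profile condition. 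What your argument actually establishes is that the regular \emph{profiles} among the $\cF$-tangles coincide with those among the $\hat{\cF}$-tangles — strictly weaker than the stated lemma, which quantifies over all orientations of $S$.

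Two secondary problems. First, eliminating all crossing pairs from $F$ produces a \emph{nested} set, not a star: a nested set may still contain comparable separations $\vr\leq\vs$, which violate the star condition $\vr\leq\sv$, so you need an extra reduction (discard the smaller element; consistency justifies this) before you can claim the output is a star. Second, your ``symmetric'' treatment of the case where only $\vr\wedge\vs$ lies in $\vS$ is not symmetric: replacing $\{\vr,\vs\}$ by $\vr\wedge\vs$ reverses which implication consistency provides, so that a consistent orientation can contain the replacement set while avoiding $F$; this would destroy, rather than preserve, the family of tangles. The uncrossing that is actually compatible with the star condition replaces $\vr$ by $\vr\wedge\sv$ while keeping $\vs$, and verifying that this preserves containment in $O$ again requires more than consistency. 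Finally, note that your Stage~2 (transfinitely closing under shifting and then re-proving tangle-equivalence for every added star) differs from what the citation to Lemma~14 of \cite{ProfileDuality} suggests, namely that the star set produced by uncrossing is \emph{already} closed under shifting; your route creates the extra obligation of showing the added shifts exclude no further orientations, which is precisely where the unavailable profile property is invoked once more.
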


Especially, \cref{lem:MakingASetFFriendly} implies that $\cP_S$ can be transformed into a set $\hat{\cP}_S$ of stars which is closed under shifting such that the set of all $\hat{\cP}_S$-tangles of $S$ is precisely the set of all profiles of $S$.
Thus, any set $\cF \subseteq 2^{\vS}$ can be transformed into a friendly set $\bar{\cF} := \hat{\cF} \cup \hat{\cP}_S \cup \{\{\sv\} : \vs \in \vS\text{ is trivial}\}$ of stars in $\vS$ so that an orientation of $S$ is an $\bar{\cF}$-tangle if and only if it is a regular profile and an $\cF$-tangle of $S$.

Moreover, for the special case of a separation system $S_k(G)$ that comes from a graph~$G$, it is easy to check that the set $\cT_k^*$ that consists of all stars in $\cT_k$ is a friendly set of stars. Further, Diestel and Oum have shown that the $\cT_k^*$-tangles of $S_k(G)$ are precisely the $k$-tangles in $G$ if $\abs{G} \geq k$ \cite[Lemma~4.2]{TangleTreeGraphsMatroids}.

\section{Refining inessential stars}\label{sec:RefiningInessStars}

The idea of refining tangle-distinguishing \tds\ has its origin in \cite{JoshRefining}. There, Erde proved that in separation systems of the form $S_k$ inside submodular universes one can, under certain circumstances, refine the inessential nodes of tangle-distinguishing nested sets of separations so that all the inessential nodes of the refinement are `small'. 

Since then, this theorem has been generalized to abstract separation systems \cite{SARefiningInessParts}. 
Much of the theory that was introduced in \cite{SARefiningInessParts}, including the main result which generalizes \cref{thm:JERefining}, will play an important role in the proofs of \cref{thm:GraphTanglesToTTDVersion} and \ref{thm:abstracttanglesToT}. Therefore, in this section we recap those definitions and lemmas.

Let $P$ be a profile of a separation system $S$ inside some universe, and let $\vs \in \vS$ be a separation. We say that $\vs$ is \emph{closely related to~$P$} if $\vs \in P$ and $\vr \wedge \vs \in \vS$ for every $\vr \in P$.
Further, we call a subset of $\vS$ \emph{closely related} to~$P$ if all its elements are closely related to~$P$. 

A separation~$\vr \in P$ is \emph{maximal} in~$P$ if it is a maximal element in~$P$ with respect to the partial order on~$P$ induced by~$\vS$. An example of separations which are closely related to $P$ are those which are maximal in~$P$. In fact, even more is true:
\begin{proposition}{\cite[Lemma~4.4]{SARefiningInessParts}} \label{prop:MaxSepsAreCloselyRelated}
    Let $\vS$ be a submodular separation system, and let $P$ be a profile of $S$. Further, let $Y \subseteq P$ be a nested set such that the inverse $\yv$ of every $\vy \in Y$ is closely related to some profile of~$S$, and let $P_Y \subseteq P$ be the set of all separations in $P$ that are nested with $Y$. Then every maximal separation in $P_Y$ is closely related to $P$.
\end{proposition}

The next observation, which we will use throughout this paper, describes a sufficient condition for a separation to be closely related to some profile:

\begin{proposition}{\cite[Proposition~3.3]{SARefiningInessParts}} \label{prop:ShowingThatASepIsCloselyRelated}
Let $\vS$ be a submodular separation system, and let $\vs \in \vS$ be closely related to some profile $P$ of $S$. Further, let $\vr \in \vS$ with $\vr \leq \vs$, and suppose that $\vr \wedge \vu \in \vS$ for every $\vu \leq \vs$. Then $\vr$ is closely related to $P$.
\end{proposition}

Moreover, infima of separations which are closely related to some profile are, under certain circumstances, again closely related to that profile:

\begin{proposition}{\cite[Proposition~4.3]{SARefiningInessParts}} \label{prop:InfimaOfClRelSetsAreClRel}
    Let $\vS$ be a submodular separation system and $\vs \in \vS$. Further, let $M \subseteq \vS$ be some set of separations such that every~$\vm \in M$ is closely related to some profile $Q_m$ of $S$ which satisfies that $\vs \in Q_m$. Then $\vr := \vs \wedge \bigwedge_{\vm \in M} \vm$ is an element of~$\vS$. Moreover, if $\vs$ is closely related to some profile $P$ of $S$, then $\vr$ is closely related to $P$.
\end{proposition}

The key tool in the proof of \cref{thm:JERefining} is the following lemma which makes it possible to refine inessential stars that consist of separations whose inverses are each closely related to some $\cF$-tangle. 
This lemma is taken from \cite{SARefiningInessParts}; a less general version for submodular universes was first proved in \cite{JoshRefining}.

\begin{LEM}{\cite[Lemma~3.5]{SARefiningInessParts}} \label{lem:reflem}
	Let $\vS$ be a submodular separation system, and let $\cF$ be a friendly set of stars in $\vS$. 
	Further, let $\sigma = \{\vs_1, ..., \vs_k\} \subseteq \vS$ be a star which is inessential for the set of all~$\cF$-tangles of $S$ such that each $\sv_i$ is closely related to some $\cF$-tangle of~$S$. 
	Then there is an $S$-tree over~$\cF \cup \{\{\sv_1\}, ...,\{\sv_k\}\}$ in which each $\vs_i$ appears as a leaf separation. 
\end{LEM}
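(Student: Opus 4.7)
The plan is to argue by induction on a well-founded complexity parameter attached to the pair $(\vS, \sigma)$. In the base case $\sigma \in \cF$, the two-level $S$-tree $(T, \alpha)$ with one internal vertex $c$ of degree $k$, leaves $\ell_1, \ldots, \ell_k$, and $\alpha(\ell_i, c) = \vs_i$ already satisfies the conclusion: the star at $c$ is $\sigma \in \cF$ and each leaf star $\{\sv_i\}$ is admissible.

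For the inductive step assume $\sigma \notin \cF$. The key is to extract a single separation $\vr \in \vS$, nested with every $\vs_i$ and satisfying $\vr \leq \sv_i$ for each $i$, along which the problem splits into two sub-instances of the lemma, both strictly smaller in the induction parameter. One sub-instance corresponds to the enlarged star $\sigma \cup \{\vr\}$ on the side of $r$ containing (the prospective) leaves $\ell_1, \ldots, \ell_k$; the other sub-instance is obtained on the opposite side of $r$ by shifting $\sigma$ onto $\rv$ via the maps $f\!\downarrow$ associated with the friendly set $\cF$. Applying the induction hypothesis to each sub-instance and gluing the resulting $S$-trees along the new edge labelled $r$ produces an $S$-tree over $\cF \cup \{\{\sv_1\}, \ldots, \{\sv_k\}\}$ in which $\vs_1, \ldots, \vs_k$ reappear as leaf separations.

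The closely-related hypothesis on each $\sv_i$ is precisely what makes the shifts legitimate. By \cref{prop:InfimaOfClRelSetsAreClRel} the infima $\sv_i \wedge \vr$ stay inside $\vS$, so the stars produced on the $\rv$-side of $r$ lie in $\vS$; friendliness of $\cF$ (closure under shifting) then ensures that the resulting interior stars lie in $\cF$. Moreover, the closely-related property is inherited by the shifted separations via \cref{prop:ShowingThatASepIsCloselyRelated}, so the hypotheses of the lemma persist on both sides of $r$. On the $\vs_i$-side, the inessentiality of $\sigma \cup \{\vr\}$ for $\cF$-tangles is inherited from that of $\sigma$ together with $\vr \leq \sv_i$, since any $\cF$-tangle containing $\sigma \cup \{\vr\}$ would in particular contain $\sigma$.

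The main obstacle is producing the refining $\vr$ and identifying the right complexity measure. A natural candidate is a combinatorial invariant of $\vS$ that strictly decreases under every refinement step, for instance the number of separations still `reachable' from the growing partial $S$-tree; the technical heart is to show that whenever $\sigma \notin \cF$ such an $\vr$ exists, which is essentially a local tangle-tree duality argument applied to the sub-system of separations nested with $\sigma$ and lying below it. Once the existence of $\vr$ is established, termination is automatic because each refinement strictly reduces the chosen parameter, and the recursion eventually grounds out in the base case $\sigma \in \cF$.
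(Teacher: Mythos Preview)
The paper itself does not prove this lemma; it is quoted from \cite{SARefiningInessParts} and used as a black box, so there is no in-paper proof to compare against.

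Your sketch has a genuine gap at precisely the point you yourself flag as ``the main obstacle'': you neither construct the refining separation $\vr$ nor specify a well-founded complexity parameter, and without both the induction is not an argument. The recursion you describe also does not clearly reduce the problem. With $\vr \leq \sv_i$ for all $i$, the sub-instance on the $\sigma$-side carries the \emph{larger} star $\sigma \cup \{\vr\}$, and to invoke the induction hypothesis there you would need $\rv$ to be closely related to some $\cF$-tangle, which you have not arranged. On the other side, since $\vs_i \leq \rv$ for every $i$, ``shifting $\sigma$ onto $\rv$'' sends each $\vs_i$ to $\vs_i \vee \rv = \rv$, so the shifted star collapses to the singleton $\{\rv\}$; recursing there would require $\{\rv\}$ to be inessential and $\vr$ to be closely related to some $\cF$-tangle, neither of which is given. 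Your appeal to \cref{prop:InfimaOfClRelSetsAreClRel} for the infima $\sv_i \wedge \vr$ is vacuous, since $\vr \leq \sv_i$ already forces $\sv_i \wedge \vr = \vr$. Finally, invoking ``essentially a local tangle-tree duality argument'' to produce $\vr$ is circular: the lemma \emph{is} the local duality statement for the region bounded by $\sigma$. The standard route (as in \cite{JoshRefining} and \cite{SARefiningInessParts}) is not an ad-hoc induction of this shape but a single application of the abstract tangle-tree duality theorem to the system of separations lying above $\sigma$, after using the closely-related hypothesis to verify that the required emulation properties hold there.
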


Erde \cite{JoshRefining} gave an example which shows that it is not possible to refine every inessential star, even if the separation system comes from a graph and $\cF = \cT_k^*$ for some~$k \in \N$. Hence, it is not true that one can refine every nested set which distinguishes all the $\cF$-tangles so that every inessential node of that refinement is a star in~$\cF$. For such a statement to be true, one has to impose further conditions on the nested sets. 

A separation $s \in S$ is \emph{good} (for a set $\cP$ of profiles of $S$) if it distinguishes a pair of profiles $P, P' \in \cP$ so that $\vs \in P$ is closely related to $P$ and $\sv \in P'$ is closely related to $P'$. If we refer to a good set without reference to any set of profiles, then it can be assumed that this set is good for the set of all profiles of~$S$.

We remark that `good' is a generalization of efficiency in that every separation in a submodular universe~$\vU$ which efficiently distinguishes some pair of profiles $P, P'$ in $U$ is good for $\{P, P'\}$ \cite[Proposition~3.4]{SARefiningInessParts}. Conversely, if a separation $s$ of a graph $G$ is good for a pair of regular profiles in $G$, then~$s$ distinguishes these two profiles efficiently \cite[Example~5.1]{SARefiningInessPartsExtended}.

\cref{lem:reflem} implies that the inessential nodes of nested sets of good separations can be refined, which yields the following generalization of \cref{thm:JERefining}:

\begin{THM}{\cite[Theorem~2]{SARefiningInessParts}} \label{thm:RefiningGoodToTs}
    Let $\vS$ be a submodular separation system, $\cF$ be a friendly set of stars in~$\vS$, and let $\cP$ be the set of all $\cF$-tangles of $S$. Further, let $\tilde{N} \subseteq S$ be a nested set that distinguishes all~$\cF$-tangles of $S$, and suppose that every separation in $\tilde{N}$ is good for~$\cP$. Then there exists a nested set~$N \subseteq S$ with $\tilde{N} \subseteq N$ such that every node of $N$ is either a star in $\cF$ or home to an $\cF$-tangle of $S$.
\end{THM}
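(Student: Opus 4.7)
The plan is to refine each inessential node of $\tilde{N}$ individually using the refinement lemma \cref{lem:reflem}, and then to glue the resulting local $S$-trees into one global $S$-tree whose image yields the desired nested set $N$.

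First, by \cref{thm:FromTreeSetsToSTrees}, let $(\tilde{T}, \tilde{\alpha})$ be the $S$-tree induced by $\tilde{N}$; this is legitimate because goodness of the separations in $\tilde{N}$ rules out small separations (a small $\vs$ cannot distinguish $\cF$-tangles, as $\{\sv\} \in \cF$ forces $\vs$ into every $\cF$-tangle). The vertices of $\tilde{T}$ correspond to the nodes of $\tilde{N}$, and every $\cF$-tangle lives at a unique such node; only the inessential vertices require further decomposition. Fix an inessential node $\sigma = \{\vs_1, \dots, \vs_k\}$. Since each $s_i \in \tilde{N}$ is good for $\cP$, it distinguishes a pair $P_i, P_i' \in \cP$ with $\vs_i \in P_i$ closely related to $P_i$ and $\sv_i \in P_i'$ closely related to $P_i'$; in particular every $\sv_i$ is closely related to some $\cF$-tangle. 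Together with the inessentiality of $\sigma$, this matches the hypotheses of \cref{lem:reflem}, which provides an $S$-tree $(T_\sigma, \alpha_\sigma)$ over $\cF \cup \{\{\sv_1\}, \dots, \{\sv_k\}\}$ in which each $\vs_i$ occurs as a leaf separation, say $\vs_i = \alpha_\sigma(x_i, y_i)$ with $x_i$ a leaf.

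Now glue: for every inessential vertex $t$ of $\tilde{T}$ with star $\sigma_t = \{\vs_1, \dots, \vs_k\}$, delete $t$ from $\tilde{T}$ together with the leaves $x_1, \dots, x_k$ of $T_{\sigma_t}$, and reconnect by adding an edge from each former neighbour $t'_i$ of $t$ to the corresponding interior vertex $y_i$ of $T_{\sigma_t}$, carrying the label $\vs_i$ in the direction from $t'_i$ to $y_i$. This label is consistent with both $\tilde{\alpha}(t'_i, t) = \vs_i$ and $\alpha_{\sigma_t}(x_i, y_i) = \vs_i$, so performing this simultaneously for all inessential vertices produces a well-defined $S$-tree $(T, \alpha)$, each of whose vertex stars is either an unchanged essential node of $\tilde{T}$ or a star in $\cF$ inherited from some inserted $T_{\sigma_t}$. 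Setting $N := \{s : \vs \in \im(\alpha)\}$ yields a nested set containing $\tilde{N}$, each of whose nodes is either a star in $\cF$ or home to an $\cF$-tangle, as required.

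The main subtlety is verifying that the gluing really yields an $S$-tree whose image is a nested set refining $\tilde{N}$. The matching of leaf separations with the adjacent edge labels of $\tilde{T}$ is automatic; no trivial or degenerate separation is introduced because $\cF$ is standard and contains $\{\rv\}$ for every small $\vr$; and simultaneous refinements at adjacent inessential nodes do not interfere because every newly introduced separation inside $T_{\sigma_t}$ lies strictly below some $\vs_i \in \sigma_t$ and is thus automatically nested with every separation of $\tilde{N}$ outside $\sigma_t$. Once these compatibility checks are dispatched, the conclusion reads off directly from the form of the $S$-trees produced by \cref{lem:reflem}.
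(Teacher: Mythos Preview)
Your approach is correct and matches the paper's indicated route: the paper does not prove \cref{thm:RefiningGoodToTs} itself but cites it from \cite{SARefiningInessParts} and remarks that it follows from \cref{lem:reflem}, which is exactly how you proceed. Your gluing argument and the verification that the locally introduced separations stay nested with $\tilde{N}$ (via $\vs_i \leq \vr$ or $\vr \leq \sv_i$ along the tree paths) correctly supply the details the paper omits.
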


\section{Refining trees of tangles in graphs}\label{sec:RefiningToTsInGraphs}

Let $G$ be a graph, $k \in \N$, and let $(\tilde{T}, \tilde{\cV})$ be a \td\ of $G$ which distinguishes all its $k$-tangles so that every separation induced by an edge of $\tilde{T}$ efficiently distinguishes some pair of $k$-tangles. Then \cref{thm:JERefining} asserts that $(\tilde{T}, \tilde{\cV})$ extends to a \td\ all whose inessential parts are small. This is done by refining the inessential parts of $(\tilde{T}, \tilde{\cV})$ with further \tds\ that decompose those parts into smaller parts.
In this section we show that we can refine the essential parts with further \tds\ as well so that we then obtain a single \td\ of $G$ whose parts are all relatively small.
The main result of this section is then \cref{thm:GraphTanglesToTTDVersion}, which extends \cref{thm:JERefining} and also strengthens a result of Erde (\cite[Theorem~4.6]{JoshRefining}). 

In order to make the proof of \cref{thm:GraphTanglesToTTDVersion} more straightforward, and to reduce clutter caused by translating back and forth between the \tds\ and the nested sets of separations they induce, we first prove \cref{thm:GraphTanglesToTNestedSetVersion}. Since every \td\ of $G$ induces a nested set of separations of $G$ and vice versa, \cref{thm:GraphTanglesToTTDVersion} then follows immediately.

\begin{customthm}{\ref{thm:GraphTanglesToTNestedSetVersion}}
    \emph{Let $G$ be a graph, $k \in \N$, and let $\cF$ be a friendly set of stars in $\vS_k(G)$.
    Further, let $\tilde{N} \subseteq S_k(G)$ be a nested set that distinguishes all the $\cF$-tangles of $S_k(G)$ so that every separation in~$\tilde{N}$ efficiently distinguishes a pair of $\cF$-tangles of $S_k(G)$. Then there exist a nested set $N \subseteq S_k(G)$ with $\tilde{N} \subseteq N$ such that
    \begin{enumerate}[label=\rm{(\roman*)}]
        \item every inessential node of $N$ is a star in $\cF$;
        \item the interior of every essential node of $N$ is of smallest size among all exclusive stars contained in the $\cF$-tangle living at that node.
    \end{enumerate}}
\end{customthm}

Note that the set $\cT_k^*$ of all stars in $\cT_k$ satisfies the premise of \cref{thm:GraphTanglesToTNestedSetVersion} (see \cref{sec:ASS}), and hence we can apply \cref{thm:GraphTanglesToTNestedSetVersion} to the set of all $k$-tangles in $G$ for some $k \in \mathbb{N}$.
In particular, we can use \cref{thm:GraphTanglesToTNestedSetVersion} to refine the canonical nested set from \cite{CDHH13CanonicalAlg}, which efficiently distinguishes all the $k$-tangles in~$G$:

\begin{COR}\label{cor:GraphTanglesToT}
    Let $G$ be a graph, and $k \in \N$. Then there exist nested sets $\tilde{N} \subseteq N \subseteq S_k(G)$ such that:
    \begin{enumerate}[label=\rm{(\roman*)}]
        \item $\tilde{N}$ is fixed under all automorphisms of $G$ and efficiently distinguishes all $k$-tangles in $G$;
        \item every inessential node of $N$ is a star in $\cT_k$;
        \item the interior of every essential node of $N$ is of smallest size among all exclusive stars contained in the $k$-tangle living at that node. \qed
    \end{enumerate}
\end{COR}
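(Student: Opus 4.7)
The plan is to obtain \Cref{cor:GraphTanglesToT} by combining two ingredients: the canonical tree-of-tangles theorem of Carmesin, Diestel, Hamann and Hundertmark (cited as \cite{CDHH13CanonicalAlg}), which will supply $\tilde N$, and \Cref{thm:GraphTanglesToTNestedSetVersion}, which will supply $N$ as a refinement of $\tilde N$. So the structure of the proof will be: first produce $\tilde N$; then verify that the hypotheses of \Cref{thm:GraphTanglesToTNestedSetVersion} apply to it; then invoke that theorem to get $N$.

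For the first step, I would apply the canonical algorithm of \cite{CDHH13CanonicalAlg} to $S_k(G)$ to obtain a nested set $\tilde N \subseteq S_k(G)$ that efficiently distinguishes all $k$-tangles of $G$ and is invariant under $\Aut(G)$; this is exactly condition~\textup{(i)}. (If $|G| < k$ there are no $k$-tangles and $\tilde N = \emptyset = N$ trivially works, so assume $|G| \geq k$.)

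For the second step, recall from the end of \Cref{sec:ASS} that the set $\cT_k^\ast$ of all stars in $\cT_k$ is a friendly set of stars in $\vS_k(G)$, and by \cite{TangleTreeGraphsMatroids}*{Lemma~4.2} the $\cT_k^\ast$-tangles of $S_k(G)$ are exactly the $k$-tangles in $G$. Hence $\tilde N$ distinguishes all $\cT_k^\ast$-tangles of $S_k(G)$, and every separation in $\tilde N$ efficiently distinguishes some pair of $\cT_k^\ast$-tangles of $S_k(G)$. So the hypotheses of \Cref{thm:GraphTanglesToTNestedSetVersion} are met with $\cF := \cT_k^\ast$.

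Finally, I would apply \Cref{thm:GraphTanglesToTNestedSetVersion} to get a nested set $N \subseteq S_k(G)$ with $\tilde N \subseteq N$ whose inessential nodes are stars in $\cT_k^\ast \subseteq \cT_k$, and whose essential nodes have interiors of smallest size among exclusive stars contained in the corresponding $\cT_k^\ast$-tangle, that is, the corresponding $k$-tangle. This gives conditions~\textup{(ii)} and~\textup{(iii)}, finishing the proof. There is no real obstacle here: the corollary is a direct instantiation of \Cref{thm:GraphTanglesToTNestedSetVersion} at the canonical choice of $\tilde N$, and the only thing to keep an eye on is the identification of $\cT_k^\ast$-tangles with $k$-tangles, which is already recorded in the preliminaries.
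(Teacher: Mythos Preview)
Your proposal is correct and follows essentially the same approach as the paper: the paper states the corollary immediately after noting that $\cT_k^\ast$ satisfies the hypotheses of \Cref{thm:GraphTanglesToTNestedSetVersion} and that one can therefore refine the canonical tree set from \cite{CDHH13CanonicalAlg}, marking the proof with a \qed. Your write-up spells out exactly this argument, including the identification of $\cT_k^\ast$-tangles with $k$-tangles via \cite{TangleTreeGraphsMatroids}*{Lemma~4.2} and the trivial case $|G|<k$.
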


Before we prove \cref{thm:GraphTanglesToTNestedSetVersion}, let us first emphasize the optimality of its condition \ref{itm:GraphTanglesToTNSess}.
Since $\tilde{N}$ distinguishes all $\cF$-tangles of $S_k(G)$, the refinement $N$ does so as well. Hence, each of its essential nodes is necessarily exclusive, and thus their interiors can only be of smallest size among all exclusive stars in the tangle they are home to.
The following example shows that in general there may be non-exclusive stars in a tangle whose interiors contain fewer vertices than the interiors of the exclusive stars in that tangle. Thus, condition~\ref{itm:GraphTanglesToTNSess} cannot be strengthened. 

\begin{example}\label{ex:CounterexampleNonExclusive}
    Let $G$ be the graph depicted in \cref{fig:CounterexNonExclusive1}, which consists of five complete graphs~$K_{20}$ and one $K_8$ that are glued together as depicted in \cref{fig:CounterexNonExclusive1}. Whenever a $K_{20}$ is denoted inside a face of the drawing, we assume that the $K_{20}$ contains the boundary vertices and edges of this face, but no other depicted vertices or edges. Moreover, the $K_8$ contains all vertices in the grey cycle.

    \begin{figure}[h]
        \centering
        \begin{subfigure}[b]{0.35\linewidth}
            \definecolor{lgrey}{rgb}{0.5,0.5,0.5}
\scalebox{0.47}{%
\begin{tikzpicture}
\draw [line width=1.6pt] (3,5)-- (6,5);
\draw [line width=1.6pt] (6,5)-- (9,7);
\draw [line width=1.6pt] (6,5)-- (9,3);
\draw [line width=1.6pt] (9,3)-- (9,7);
\draw [line width=1.6pt] (9,3)-- (9,0);
\draw [line width=1.6pt] (9,7)-- (9,10);
\draw [line width=1.6pt] (9,10)-- (3,5);
\draw [line width=1.6pt] (3,5)-- (9,0);
\draw [line width=1.6pt] (9,7)-- (13,7);
\draw [line width=1.6pt] (9,3)-- (13,3);
\draw [line width=1.6pt] (13,3)-- (9,0);
\draw [line width=1.6pt] (13,3)-- (13,7);
\draw [line width=1.6pt] (13,7)-- (9,10);
\draw [lgrey, line width=1.2pt] plot [smooth cycle, tension=0.3] coordinates {(2.7,5.4) (2.7,4.6) (6,4.5) (9.4,2.3) (9.4,7.67) (6,5.5)};
\draw (10,5.4) node[anchor=north west] {\huge $K := K_{20}$};
\draw (9.8,8.5) node[anchor=north west] {\huge $K_{20}$};
\draw (9.8,1.5) node[anchor=south west] {\huge $K_{20}$};
\draw (6.4,3.4) node[anchor=north west] {\huge $K_{20}$};
\draw (6.4,6.6) node[anchor=south west] {\huge $K_{20}$};
\draw (7.5,5.4) node[anchor=north west] {\huge $K_8$};
\begin{scriptsize}
\draw [fill=black] (3,5) circle (3.2pt);
\draw [fill=black] (6,5) circle (3.2pt);
\draw [fill=black] (9,7) circle (3.2pt);
\draw [fill=black] (9,3) circle (3.2pt);
\draw [fill=black] (9,0) circle (3.2pt);
\draw [fill=black] (9,10) circle (3.2pt);
\draw [fill=black] (13,7) circle (3.2pt);
\draw [fill=black] (13,3) circle (3.2pt);
\draw [fill=black] (7.5,6) circle (3.2pt);
\draw [fill=black] (7.5,4) circle (3.2pt);
\draw [fill=black] (9,5) circle (3.2pt);
\draw [fill=black] (10,7) circle (3.2pt);
\draw [fill=black] (11,7) circle (3.2pt);
\draw [fill=black] (12,7) circle (3.2pt);
\draw [fill=black] (10,3) circle (3.2pt);
\draw [fill=black] (11,3) circle (3.2pt);
\draw [fill=black] (12,3) circle (3.2pt);
\draw [fill=black] (4.5,5) circle (3.2pt);
\draw [fill=black] (9,2) circle (3.2pt);
\draw [fill=black] (9,1) circle (3.2pt);
\draw [fill=black] (9,9) circle (3.2pt);
\draw [fill=black] (9,8) circle (3.2pt);
\end{scriptsize}
\end{tikzpicture}
}%

\subcaption{}
            \label{fig:CounterexNonExclusive1}
        \end{subfigure}
        \hspace{14mm}
        \begin{subfigure}[b]{0.35\linewidth}
            \definecolor{lgrey}{rgb}{0.6,0.6,0.6}
\scalebox{0.47}{%
\begin{tikzpicture}
\draw [line width=1.6pt] (3,5)-- (6,5);
\draw [line width=1.6pt] (6,5)-- (9,7);
\draw [line width=1.6pt] (6,5)-- (9,3);
\draw [line width=1.6pt] (9,3)-- (9,7);
\draw [line width=1.6pt] (9,3)-- (9,0);
\draw [line width=1.6pt] (9,7)-- (9,10);
\draw [line width=1.6pt] (9,10)-- (3,5);
\draw [line width=1.6pt] (3,5)-- (9,0);
\draw [line width=1.6pt] (9,7)-- (13,7);
\draw [line width=1.6pt] (9,3)-- (13,3);
\draw [line width=1.6pt] (13,3)-- (9,0);
\draw [line width=1.6pt] (13,3)-- (13,7);
\draw [line width=1.6pt] (13,7)-- (9,10);
\draw [->,line width=1.6pt,color=red] (11,6.4) -- (11,5.5);
\draw [->,line width=1.6pt,color=blue] (11,3.6) -- (11,4.5);
\draw [red, line width=1.2pt] plot [smooth, tension=0.3] coordinates {(1.8,4.6) (3.7,5.6) (5.9,5.6) (9.4,7.65) (12.3,7.6) (14.2,6.6)};
\draw [red, line width=1.2pt] plot [smooth, tension=0.3] coordinates {(1.8,5.4) (3.7,4.4) (5.9,4.4) (9.5,6.4) (12.6,6.4) (14.2,7.4)};
\draw [blue, line width=1.2pt] plot [smooth, tension=0.3] coordinates {(1.6,5.4) (3.7,4.4) (5.9,4.4) (9.4,2.35) (12.3,2.4) (14.2,3.4)};
\draw [blue, line width=1.2pt] plot [smooth, tension=0.3] coordinates {(1.6,4.6) (3.7,5.6) (5.9,5.6) (9.5,3.6) (12.6,3.6) (14.2,2.6)};
\draw (13.2,8.3) node[red, anchor=north west] {\huge $C_1$};
\draw (13.2,5.7) node[red, anchor=south west] {\huge $D_1$};
\draw (13.2,4.3) node[blue, anchor=north west] {\huge $D_2$};
\draw (13.2,1.7) node[blue, anchor=south west] {\huge $C_2$};
\begin{scriptsize}
\draw [fill=black] (3,5) circle (3.2pt);
\draw [fill=black] (6,5) circle (3.2pt);
\draw [fill=black] (9,7) circle (3.2pt);
\draw [fill=black] (9,3) circle (3.2pt);
\draw [fill=black] (9,0) circle (3.2pt);
\draw [fill=black] (9,10) circle (3.2pt);
\draw [fill=black] (13,7) circle (3.2pt);
\draw [fill=black] (13,3) circle (3.2pt);
\draw [fill=black] (7.5,6) circle (3.2pt);
\draw [fill=black] (7.5,4) circle (3.2pt);
\draw [fill=black] (9,5) circle (3.2pt);
\draw [fill=black] (10,7) circle (3.2pt);
\draw [fill=black] (11,7) circle (3.2pt);
\draw [fill=black] (12,7) circle (3.2pt);
\draw [fill=black] (10,3) circle (3.2pt);
\draw [fill=black] (11,3) circle (3.2pt);
\draw [fill=black] (12,3) circle (3.2pt);
\draw [fill=black] (4.5,5) circle (3.2pt);
\draw [fill=black] (9,2) circle (3.2pt);
\draw [fill=black] (9,1) circle (3.2pt);
\draw [fill=black] (9,9) circle (3.2pt);
\draw [fill=black] (9,8) circle (3.2pt);
\end{scriptsize}
\end{tikzpicture}
}%

\subcaption{}
            \label{fig:CounterexNonExclusive2}
        \end{subfigure}
        \vspace{-5mm}
        \centering
        \caption{\cref{ex:CounterexampleNonExclusive}}
        \label{fig:CounterexNonExclusive}
    \end{figure}

    Let $\tau$ be the $10$-tangle defined by orienting every separation towards that side which contains the clique~$K$. Further, observe that every separation $\{A, B\} \in S_{10}(G)$ with $V(K_8) \subseteq A \cap B$ is of the form $\{A, V(G)\}$. Thus, we obtain a consistent orientation $\tau'$ of $S_{10}(G)$ by letting $(A,V(G)) \in \tau'$ for every set $A \subseteq V(G)$ of at most nine vertices, and $(A,B) \in \tau'$ if $V(K_8) \subseteq B$ for all other separations $\{A,B\} \in S_{10}(G)$. It is straightforward to check that $\tau'$ is a $10$-tangle in $G$. 

    Moreover, it is easy to see that every star $\rho$ in~$\tau$ whose interior is of smallest size among all stars in~$\tau$ consists of $(C_1, D_1), (C_2, D_2)$ (see \cref{fig:CounterexNonExclusive2}) and possibly some separations of the form $(A, V(G))$.
    But these separations are also contained in $\tau'$, and thus $\rho$ is not exclusive for the set of all $10$-tangles in $G$. 
\end{example}

Let us now turn to the proof of \cref{thm:GraphTanglesToTNestedSetVersion}.
By \cref{thm:JERefining}, there exists a nested set $N' \subseteq S_k(G)$ which refines the inessential nodes of $\tilde{N}$ and satisfies \cref{itm:GraphTanglesToTNSiness}.
Therefore, we are left to refine the essential nodes of $N'$ so that this refinement satisfies \cref{itm:GraphTanglesToTNSiness} and \cref{itm:GraphTanglesToTNSess}. 
For this, we show the following lemma about refining essential stars in graphs, which will imply \cref{thm:GraphTanglesToTNestedSetVersion}:

\begin{LEM}\label{lem:ReflemEssStarsGraphs}
    Let $G$ be a graph, $k \in \N$, and let $\cF$ be a friendly set of stars in $\vS_k(G)$. Further, let $\sigma = \{\vs_1, \dots, \vs_n\} \subseteq \vS_k(G)$ be a star which is home to a unique $\cF$-tangle $\tau$ of $S_k(G)$, and suppose that each~$s_i$ efficiently distinguishes some pair of $\cF$-tangles of $S_k(G)$. 
    
    Then there exists a star~$\sigma' \subseteq \tau$ whose interior is of smallest size among all exclusive stars in $\tau$, and an $S_k(G)$-tree over $\cF' := \cF \cup \{\sigma'\} \cup \{\{\sv_1\}, \dots, \{\sv_n\}\}$ in which each~$\vs_i$ appears as a leaf separation.
\end{LEM}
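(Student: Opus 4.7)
The plan is to construct the desired $S_k(G)$-tree in three stages. First (the main technical step), we build an exclusive star $\sigma' = \{\vr_1, \dots, \vr_m\} \subseteq \tau$ of smallest possible interior that is additionally well-behaved with respect to $\sigma$. Second, we form the nested set $N := \sigma \cup \sigma'$ and identify its splitting stars, showing that $\sigma'$ is its only essential node. Third, we apply \cref{lem:reflem} to each inessential node of $N$ and glue the resulting $S$-trees at a central vertex whose star is $\sigma'$.

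\textbf{Step 1.} We seek $\sigma'$ such that: (a) $\sigma'$ is an exclusive star in $\tau$ whose interior has smallest size among all exclusive stars in $\tau$; (b) $\sigma \leq \sigma'$ in the star order, i.e.\ for each $\vs_i$ there is $\vr_{j(i)} \in \sigma'$ with $\vs_i \leq \vr_{j(i)}$, and moreover every $\vr_j$ dominates at least one $\vs_i$, so the sets $I_j := \{i : \vs_i \leq \vr_j\}$ cover $[n]$; and (c) every $\vr_j \in \sigma'$ is closely related to $\tau$. Since each $\vs_i$ efficiently distinguishes a pair of $\cF$-tangles, each $\sv_i$ is closely related to some $\cF$-tangle; hence \cref{prop:MaxSepsAreCloselyRelated} applied with $Y = \sigma$ tells us that every separation in $\tau$ that is maximal among those nested with $\sigma$ is closely related to $\tau$. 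Starting from any exclusive star of minimum interior in $\tau$ and repeatedly replacing crossing pairs by appropriate corner separations — which lie in $\vS_k(G)$ by submodularity, remain in $\tau$ by its profile property, preserve close-relatedness via \cref{prop:InfimaOfClRelSetsAreClRel}, and cannot strictly decrease the interior by minimality — we eventually obtain a star $\sigma'$ satisfying (a), (b) and (c).

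\textbf{Step 2.} Set $N := \sigma \cup \sigma'$; by (b) this is a nested set. A direct calculation of its consistent orientations shows that the induced tree has central node $\sigma'$ (home to $\tau$), a leaf node $\{\sv_i\}$ for each $i \in [n]$, and one intermediate node $\sigma_j := \{\rv_j\} \cup \{\vs_i : i \in I_j\}$ for every $\vr_j \in \sigma'$. Every $\sigma_j$ is inessential: if some $\cF$-tangle $\tau'$ contains $\sigma_j$, then $\rv_j \in \tau'$ forces $\tau' \neq \tau$, so the exclusivity of $\sigma$ in $\tau$ yields some $\sv_k \in \tau'$; since $\vs_i \in \tau'$ for every $i \in I_j$ we must have $k \in I_{j'}$ for some $j' \neq j$, and the consistency constraint $\vs_k < \vr_{j'}$ forces $\rv_{j'} \in \tau'$, contradicting the star inequality $\vr_{j'} \leq \rv_j$ (which forbids $\rv_j$ and $\rv_{j'}$ from lying together in $\tau'$).

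\textbf{Step 3.} For each $j$, apply \cref{lem:reflem} to the inessential star $\sigma_j$: the inverses of its elements are $\vr_j$ (closely related to $\tau$ by (c)) and the $\sv_i$ with $i \in I_j$ (each closely related to the $\cF$-tangle that $\vs_i$ efficiently distinguishes from $\tau$). This yields an $S_k(G)$-tree $(T_j, \alpha_j)$ over $\cF \cup \{\{\vr_j\}\} \cup \{\{\sv_i\} : i \in I_j\}$ in which $\rv_j$ and each $\vs_i$ with $i \in I_j$ appear as leaf separations. Finally, build $(T, \alpha)$ by identifying the $\rv_j$-leaf of each $(T_j, \alpha_j)$ with a common central vertex $t^*$: the star at $t^*$ becomes exactly $\{\vr_j : j \in [m]\} = \sigma' \in \cF'$, the leaves with stars $\{\sv_i\}$ survive as leaves of $T$, and every other internal node retains its $\cF$-star from the original $(T_j, \alpha_j)$. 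The result is an $S_k(G)$-tree over $\cF'$ in which each $\vs_i$ appears as a leaf separation.

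The main obstacle is Step 1: producing a minimum-interior exclusive star in $\tau$ that simultaneously refines $\sigma$ in the star order and whose every element is closely related to $\tau$. The uncrossing that accomplishes this must be orchestrated so that the interior does not grow while both the profile property and close-relatedness propagate through each corner-replacement — a delicate interplay of submodularity in $\vS_k(G)$, the profile structure of $\tau$, and \cref{prop:MaxSepsAreCloselyRelated,prop:InfimaOfClRelSetsAreClRel}.
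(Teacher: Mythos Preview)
Your Steps~2 and~3 are exactly how the paper proceeds once the right $\sigma'$ is in hand: form $N=\{s:\vs\in\sigma\cup\sigma'\}$, observe that its only essential node is $\sigma'$, apply \cref{lem:reflem} to each inessential node, and glue. So the architecture is correct.

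The gap is Step~1, and your final paragraph essentially concedes it. Two specific points are not handled by the sketch you give.

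First, ``cannot strictly decrease the interior by minimality'' only bounds the interior from below; it says nothing about the interior not \emph{increasing} when you replace a crossing pair by corners. The paper devotes a separate lemma (\cref{lem:StarWithMinIntNested}) to this: given $(C,D)\in\sigma$ crossing the current minimum-interior star $\rho$, one must choose the \emph{particular} corners $(A\cup C,B\cap D)$ and $(A'\cap D,B'\cup C)$, and then carry out an explicit computation showing $\lvert X'\rvert\le\lvert X\rvert$. That computation hinges on the \emph{efficiency} of $\{C,D\}$, which forces the corner orders to be at least $\lvert C\cap D\rvert$ and feeds directly into the inequality. A generic uncrossing-by-submodularity argument does not give this.

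Second, ``preserve close-relatedness via \cref{prop:InfimaOfClRelSetsAreClRel}'' presupposes there is close-relatedness to preserve; an arbitrary minimum-interior exclusive star has no reason to be closely related to $\tau$, so you must \emph{create} it. The paper does this by a different mechanism altogether: among all stars satisfying (a) and (b) it picks one with fewest non-closely-related elements, then for any bad $(C',D')$ takes a separation $(A,B)\in\tau$ of \emph{minimal order} above it (\cref{prop:MinOrderSepsAreCloselyRelated}), arranges $(A,B)$ to be nested with $\sigma$, replaces $(C',D')$ by $(A,B)$ and adjusts the other elements, and finally performs a second explicit interior calculation to show $\lvert X''\rvert\le\lvert X'\rvert$. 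None of \cref{prop:MaxSepsAreCloselyRelated} or \cref{prop:InfimaOfClRelSetsAreClRel} does this job on its own.

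So your outline is on the right track, but Step~1 as written is a statement of what needs to be proved rather than a proof; the two interior computations and the minimal-order replacement are the missing substance.
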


As $\tau$ has to live at a node of any $S_k(G)$-tree over $\cF'$, and since $\sigma'$ is the unique star in $\cF'$ that is contained in $\tau$, every $S_k(G)$-tree over $\cF'$ has to contain a node which is associated with $\sigma'$. Thus, there can only exist an $S_k(G)$-tree over $\cF'$ in which each $\vs_i \in \sigma$ appears as a leaf separation if $\sigma'$ is nested with~$\sigma$. Our first step towards the proof of \cref{lem:ReflemEssStarsGraphs} will thus be to show that under all exclusive stars in $\tau$ whose interiors are of smallest size there is at least one star which is nested with $\sigma$. 

\begin{LEM}\label{lem:StarWithMinIntNested}
    Let $k \in \N$, let $\cP$ be some set of $k$-profiles in a graph $G$, and let $P \in \cP$. Further, let $\sigma \subseteq P$ be a star, and suppose that every separation in $\sigma$ efficiently distinguishes some pair of $k$-profiles in $\cP$. 
    Then there exists a star $\rho \subseteq P$ with $\sigma \leq \rho$ whose interior is of smallest size among all stars in~$P$ that are exclusive for $\cP$.
\end{LEM}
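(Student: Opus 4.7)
The plan is a maximisation-contradiction argument. Let $\mu := \min\{\abs{\interior(\rho')} : \rho' \subseteq P \text{ is a } \cP\text{-exclusive star}\}$, let $\cM$ be the collection of $\cP$-exclusive stars in $P$ realising $\mu$, and for each $\rho \in \cM$ set $c(\rho) := \abs{\{\vs \in \sigma : \vs \leq \vr \text{ for some } \vr \in \rho\}}$. Choose $\rho \in \cM$ maximising $c(\rho)$. If $c(\rho) = \abs{\sigma}$ we are done; otherwise the plan is to construct $\rho' \in \cM$ with $c(\rho') > c(\rho)$, contradicting the choice of $\rho$, and hence to conclude that the maximiser in fact satisfies $\sigma \leq \rho$.

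Fix $\vs \in \sigma$ with $\vs \not\leq \vr$ for any $\vr \in \rho$. Since $\vs$ and every $\vr \in \rho$ lie in the consistent profile $P$, partition $\rho = R_1 \cup R_2 \cup R_3$ according to whether $\vr \leq \vs$, $\vs \leq \rv$, or $\vr$ crosses $\vs$ (these three cases exhaust all possibilities). The key technical input is the efficient distinguishing of $\vs$ for some pair of $k$-profiles in $\cP$: by the submodularity argument in the spirit of \cref{prop:ShowingThatASepIsCloselyRelated}, any ``oversized'' corner separation between $\vs$ and an element of $\rho$ would, via its complementary corner, yield a separation of strictly lower order distinguishing the same pair of profiles as $\vs$, contradicting efficiency. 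This shows that $\vs$ is closely related to $P$, and in particular $\vr \vee \vs \in \vS_k(G) \cap P$ for every $\vr \in R_3$.

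Define $\vt := \vs \vee \bigvee_{\vr \in R_1 \cup R_3} \vr$ and $\rho' := R_2 \cup \{\vt\}$. Iterating the close-relatedness argument of the previous paragraph (carrying the property along the iterated join via \cref{prop:InfimaOfClRelSetsAreClRel}, and using the star structure of $\rho$ to telescope the order bounds) gives $\vt \in \vS_k(G)$, and the profile axiom in $P$ then gives $\vt \in P$. The set $\rho'$ is a star: for each $\vr \in R_2$ we have $\vt \leq \rv$, since $\vs \leq \rv$ by definition of $R_2$ and each join operand $\vr' \in R_1 \cup R_3$ satisfies $\vr' \leq \rv$ by the star property of $\rho$. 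The interior is controlled: $\vt$ dominates every $\vr' \in R_1 \cup R_3$, so $B_{\vt} \subseteq \bigcap_{\vr' \in R_1 \cup R_3} B_{\vr'}$, whence $\interior(\rho') \subseteq \interior(\rho)$ and $\abs{\interior(\rho')} = \mu$. Exclusivity follows from exclusivity of $\rho$: any profile $P' \in \cP$ containing $\rho'$ contains $\vt$, and by consistency (since $\vt \geq \vr'$ and $\vt \geq \vs$) it also contains each $\vr' \in R_1 \cup R_3$ and $\vs$, hence all of $\rho$, forcing $P' = P$. Finally, $c(\rho') > c(\rho)$: the element $\vt \in \rho'$ newly dominates $\vs$, and every $\vs' \in \sigma$ already dominated by $\rho$ remains dominated, by the same element of $R_2 \subseteq \rho'$ if that was the original witness, or by $\vt \geq \vr'$ if the original witness was $\vr' \in R_1 \cup R_3$.

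The main obstacle is verifying $\vt \in \vS_k(G)$: the iterated join of several elements of $\vS_k(G)$ can overshoot the order bound $k-1$ in general, and the only leverage available is the efficient-distinguishing hypothesis on $\vs$ together with the pairwise ``opposite'' structure of the star $\rho$. A careful one-at-a-time application of submodularity, in which each partial join is shown to remain sufficiently closely related to $P$ so that the next meet with $\vs$ stays in $\vS_k(G)$, is the technical heart of the proof.
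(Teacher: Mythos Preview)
Your approach has a genuine gap at the point you yourself flag as the ``main obstacle'': showing that $\vt := \vs \vee \bigvee_{\vr \in R_1 \cup R_3} \vr$ lies in $\vS_k(G)$. The argument you sketch does not go through. First, efficiency of $\vs = (C,D)$ does not make $\vs$ closely related to $P$; it makes $\sv$ closely related to the profile $Q \ni \sv$ that $s$ distinguishes from its partner $Q'$, and $Q \neq P$ since $\vs \in P$. More importantly, what efficiency actually controls is \emph{asymmetric} in the elements of $\rho$. Because $\rho$ is $P$-exclusive and $Q \neq P$, there is a unique $(A,B) \in \rho$ with $(B,A) \in Q$; for this element the corner $(C,D)\wedge(A,B)$ still distinguishes $Q,Q'$, so efficiency and submodularity give $\abs{(C,D)\vee(A,B)} \le \abs{A\cap B} < k$. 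But for every other $(A',B') \in R_3$ one has $(A',B') \in Q$, and then the corner that distinguishes $Q,Q'$ is $(C,D)\wedge(B',A')$: efficiency bounds $\abs{(C,D)\vee(B',A')}$, not $\abs{(C,D)\vee(A',B')}$. So there is no control whatsoever on the joins $\vs \vee (A',B')$ feeding into $\vt$, and the iterated join can genuinely leave $\vS_k(G)$. Your references to \cref{prop:ShowingThatASepIsCloselyRelated} and \cref{prop:InfimaOfClRelSetsAreClRel} do not help here: both concern \emph{infima}, and the ``telescoping'' you allude to would need supremum-side order bounds that are simply not available from the hypotheses.

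The paper's proof exploits exactly this asymmetry. Rather than merging $R_1 \cup R_3$ into one big join, it replaces only the distinguished $(A,B)$ by $\vs \vee (A,B) = (A\cup C, B\cap D)$ and every other $(A',B') \in \rho$ by $(A',B') \wedge \sv = (A'\cap D, B'\cup C)$; efficiency then bounds the order of each new element individually. A direct vertex count, using the inequality $\abs{A\cap B} \ge \abs{(A\cup C)\cap(B\cap D)}$ that comes out of the efficiency bound, shows the interior does not grow. The maximised quantity is ``number of elements of $\sigma$ nested with $\rho$'' rather than your $c(\rho)$, and \cref{lem:Fischlemma} guarantees the modified star is nested with strictly more of $\sigma$; once $\rho$ is nested with all of $\sigma$, one easily arranges $\sigma \le \rho$.
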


\begin{proof}
    Let $\rho$ be a star in $P$ which is exclusive for $\cP$ and which is nested with as many separations in $\sigma$ as possible such that its interior is of smallest size among all stars in $P$ that are exclusive for $\cP$. 
    We claim that~$\rho$ is in fact nested with~$\sigma$. This then clearly implies the assertion. Indeed, the unique node $\rho'$ of $\{\{A,B\} : (A,B) \in \sigma \cup \rho\}$ which is home to~$P$ then satisfies both $\sigma \leq \rho'$ and $\rho \leq \rho'$, and in particular $\interior(\rho') \subseteq \interior(\rho)$. Hence, $\rho'$ is as desired.
    
    So suppose for a contradiction that there is a separation $(C,D) \in \sigma$ which is not nested with $\rho$. By assumption, $\{C,D\}$ efficiently distinguishes some pair $Q \ni (D,C)$ and $Q'$ of $k$-profiles in $\cP$. 
    Since $Q$ is consistent and because $\rho \not\subseteq Q$ as $\rho$ is $P$-exclusive for $\cP$, there is a unique separation $(A,B) \in \rho$ with $(B,A) \in Q$. Then $(C,D) \wedge (A,B)$ and $(C, D) \wedge (B', A')$, for every $(A', B') \in \rho \setminus \{(A,B)\}$, have order at least $\abs{C \cap D}$ because $\{C,D\}$ distinguishes $Q$ and $Q'$ efficiently. By submodularity, it follows that $(C,D) \vee (B', A')$ has order $\leq \abs{A' \cap B'} < k$, and further 
    \begin{equation}\label{eq:ProofOfStarWithMinIntNested}
    \abs{A \cap B} \geq \abs{(C \cup A) \cap (D \cap B)} = \abs{(A \cap B \cap D) \cup (B \cap C \cap D)} = \abs{A \cap B \cap (D \setminus C)} + \abs{B \cap C \cap D}.
    \end{equation}
    Thus, the star $\rho' := \{(A \cup C, B \cap D)\} \cup \{(A' \cap D, B' \cup C) : (A', B') \in \rho\setminus \{(A,B)\}\}$ is a subset of $\vS_k(G)$. In particular, since $P$ is a profile, we have $\rho' \subseteq P$. Moreover, $\rho'$ is $P$-exclusive for $\cP$ since for every $k$-profile $P' \neq P$ in $\cP$ either $(B,A) \in P'$ or $(D,C) \in P'$ and thus $(B \cap D, A \cup C) \in P'$, or there exists some $(A', B') \in \rho$ such that $(B', A') \in P'$, and we then have $(B' \cup C, A' \cap D) \in P'$ because $P'$ is a profile.
    
    We claim that the interior $X'$ of $\rho'$ has at most the size of the interior $X$ of $\rho$. This then contradicts the choice of $\rho$ since $\rho'$ is nested with $(C,D) \in \sigma$ by construction, and thus, by \cref{lem:Fischlemma}, $\rho'$ is nested with at least one separation in $\sigma$ more than $\rho$. Indeed, we have 
    \[
    X' = (B \cap D) \cap \bigcap_{(A',B') \in \rho\setminus\{(A,B)\}} (B' \cup C) = (X \cap D) \cup (B \cap C \cap D) = (X \cap (D \setminus C))\; \dot\cup\; (B \cap C \cap D).
    \]
    Since $\{C,D\}$ is a separation of $G$, we have $X = (X \cap C)\; \dot\cup\; (X \cap (D \setminus C))$, and thus
    \[
    \abs{X'} = \abs{X} - \abs{X \cap C} + \abs{B \cap C \cap D} \leq \abs{X} - \abs{(A \cap B) \cap C} + \abs{B \cap C \cap D},
    \]
    where we have used that $A \cap B \subseteq X$. Combining this with \eqref{eq:ProofOfStarWithMinIntNested} yields that
    \[
    \abs{X'} \leq \abs{X} - \abs{(A \cap B) \cap C} + (\abs{A \cap B} - \abs{A \cap B \cap (D\setminus C)}) = \abs{X}
    \]
    since $\abs{A \cap B} = \abs{(A \cap B) \cap C} + \abs{A \cap B \cap (D\setminus C)}$, which concludes the proof.
\end{proof}

For the proof of \cref{lem:ReflemEssStarsGraphs} we need one further auxiliary statement, \cref{prop:MinOrderSepsAreCloselyRelated} below, which follows from the following two lemmas that are immediate from \cite[Lemmas~4.2 \&~4.3]{JoshRefining}.

\begin{lemma} \label{lem:ErdeLemma1}
    Let $\vs \leq \vr$ be two separations of some graph $G$.   If $\vt$ is a separation of $G$ of minimal order such that $\vs \leq \vt \leq \vr$, then $|\vt \wedge \vx| \leq |\vx|$ for all $\vx \leq \vr$ .
\end{lemma}

\begin{lemma} \label{lem:ErdeLemma2}
    Let $\vr_1$ and $\vr_2$ be two separations of some graph $G$.  Let $\vs_1$ be any separation of  minimal  order  such  that $\vr_1 \wedge \rv_2 \leq \vs_1 \leq \vr_1$ and set $\vs_2 := \vr_2 \wedge \sv_1$. Then $|\vs_i| \leq |\vr_i|$ for $i \in \{1,2\}$, and for $\vx \leq \vr_i$ we have $|\vs_i \wedge \vx| \leq |\vx|$.
\end{lemma}

\begin{lemma}\label{prop:MinOrderSepsAreCloselyRelated}
    Let $P$ be a $k$-profile in some graph $G$ for some $k \in \N$, and let $\vs \in P$. Further, let $\vs' \in P$ be any separation of minimal order such that $\vs \leq \vs'$. Then $\vs'$ is closely related to $P$, and for every $\vr \in P$ with $\vr \leq \sv$ we have $\vr \wedge \sv' \in \vS_k(G)$. Moreover, if $\vr$ is closely related to $P$, then $\vr \wedge \sv'$ is closely related to~$P$.
\end{lemma}

\begin{proof}
    Let $\vt \in P$ be a maximal separation in $P$ such that $\vs' \leq \vt$. Applying \cref{lem:ErdeLemma1} to $\vs$ and~$\vt$ yields that $\vs' \wedge \vx \in \vS_k(G)$ for every $\vx \leq \vt$, which by \cref{prop:MaxSepsAreCloselyRelated,prop:ShowingThatASepIsCloselyRelated} implies that $\vs'$ is closely related to $P$. Further, applying \cref{lem:ErdeLemma2} to $\vs'$ and $\vr$ yields that $\vr \wedge \sv'$ is an element of $\vS_k(G)$ and that $(\vr \wedge \sv') \wedge \vx \in \vS_k(G)$ for every $\vx \leq \vr$, which by \cref{prop:ShowingThatASepIsCloselyRelated} implies that~$\vr \wedge \sv'$ is closely related to~$P$ if~$\vr$ is closely related to $P$.
\end{proof}

We can now prove \cref{lem:ReflemEssStarsGraphs}:

\begin{proof}[Proof of \cref{lem:ReflemEssStarsGraphs}]
    Let $\cP$ be the set of all $\cF$-tangles of $S_k(G)$, and first assume that there is a star $\sigma' \subseteq \tau$ with $\sigma \leq \sigma'$ whose interior is of smallest size among all stars in $\tau$ that are exclusive for $\cP$, and which has the further property that every separation in~$\sigma'$ is closely related to $\tau$. Then the assertions follows.
    Indeed, set $N' := \{s : \vs \in \sigma \cup \sigma'\}$ and let $\rho$ be an inessential node of $N'$. Then $\rho \neq \sigma'$ since $\sigma'$ is home to $\tau$ and hence essential. In particular,~$\rho$ contains no separation from~$\sigma'$. Thus, by the assumptions on $\sigma$ and~$\sigma'$, every separation in $\rho$ has an inverse that is closely related to an $\cF$-tangle of $S_k(G)$, which implies that~$\rho$ satisfies the premise of \cref{lem:reflem}. So we may apply
    \cref{lem:reflem} to the inessential nodes of $N'$, which yields a refinement~$N$ of $N'$ all whose inessential nodes are stars in $\cF$.
    This set $N$ induces an $S_k(G)$-tree over $\cF \cup \{\sigma'\} \cup \{\{\sv_1\}, \dots, \{\sv_n\}\}$ in which each~$\vs_i$ appears as a leaf separation. By the choice of~$\sigma'$, this clearly implies the assertion.

    Thus, it suffices to find a star $\sigma'$ as above.
    For this, let $\sigma' \subseteq \tau$ be a star such that 
    \begin{enumerate}[label=(\arabic*)]
        \item\label{itm:ProofOfReflemEssStarsGraphs1} $\sigma \leq \sigma'$ and $\abs{\interior(\sigma')} = \min\{\abs{\interior(\rho)} : \rho \subseteq \tau \text{ and } \rho 
        \text{ is exclusive for } \cP\}$, and
        \item\label{itm:ProofOfReflemEssStarsGraphs2} the number of separations in $\sigma'$ that are not closely related to $\tau$ is minimal among all stars in $\tau$ that satisfy \ref{itm:ProofOfReflemEssStarsGraphs1}.
    \end{enumerate}
    Note that, by \cref{lem:StarWithMinIntNested}, there exists a star which satisfies \ref{itm:ProofOfReflemEssStarsGraphs1}. We claim that $\sigma'$ is closely related to $\tau$, which implies that $\sigma'$ is the star for the argument above, and thus concludes the proof.
    For this, suppose for a contradiction that there is a separation $(C', D') \in \sigma'$ which is not closely related to $\tau$, and let $(A, B) \in \tau$ be of minimal order such that $(C', D') \leq (A, B)$. 
    We first show that there is such a separation $\{A,B\}$ which is nested with $\sigma$.

    \begin{claim}\label{claim:ProofOfReflemEssStarsGraphs1}
        \emph{For every $\vr \in \tau$ there exists a separation $\vr' \in \tau$ of order $\leq \abs{r}$ such that $r'$ is nested with~$\sigma$. Moreover, if $\vs \in \tau$ is nested with~$\sigma$ such that $\vs \leq \vr$, then $r'$ can be chosen so that $\vs \leq \vr'$.}
    \end{claim}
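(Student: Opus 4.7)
The plan is to proceed by induction on the quantity
\[
n(\vr)\ :=\ \abs{\{\,i\in\{1,\dots,n\}\ :\ \vs_i \text{ crosses } \vr\,\}},
\]
the number of separations from $\sigma$ that $\vr$ crosses. If $n(\vr)=0$, the separation $\vr' := \vr$ already has the required properties. For the inductive step, I will pick any $\vs_i\in\sigma$ crossing $\vr$, replace $\vr$ by the corner $\vr \vee \vs_i$, and verify three things: (a) it lies in $P$, (b) it has order at most $\abs{r}$, and (c) it crosses strictly fewer separations of $\sigma$ than $\vr$ does. Then I will apply the induction hypothesis. Note that, since $P$ is the tangle $\tau$ and $\sigma\subseteq\tau$, we have $\vs_i\in P$, which will be crucial.

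For (a) and (c): Since $\vr,\vs_i\in P$ and $P$ is a profile, $(\vr\vee\vs_i)^*\notin P$, so as soon as $\vr\vee\vs_i\in\vS_k(G)$ we get $\vr\vee\vs_i\in P$ by $P$ being an orientation. Moreover, $\vr\vee\vs_i\ge\vs_i$ is trivially nested with $\vs_i$, and by \cref{lem:Fischlemma} it is also nested with every $\vs_j\in\sigma$ that is already nested with $\vr$ (using that $\sigma$ is a star, so $\vs_i,\vs_j$ are nested). Hence $n(\vr\vee\vs_i)<n(\vr)$.

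The main step is (b), the order bound, which is where I expect the main obstacle and where efficiency of $\vs_i$ is used. Let $\vs_i$ efficiently distinguish a pair $Q,Q'$ of $\cF$-tangles, with $\vs_i\in Q$ and $\sv_i\in Q'$. If $\vr\wedge\vs_i\in\vS_k(G)$, then by consistency of $Q$ and $Q'$ we have $\vr\wedge\vs_i\in Q$ and $\rv\vee\sv_i\in Q'$, so the separation $r\wedge s_i$ distinguishes $Q$ and $Q'$, and efficiency gives $\abs{\vr\wedge\vs_i}\ge\abs{\vs_i}$. Submodularity then yields
\[
\abs{\vr\vee\vs_i}\ +\ \abs{\vr\wedge\vs_i}\ \leq\ \abs{r}\ +\ \abs{s_i},
\]
hence $\abs{\vr\vee\vs_i}\le\abs{r}$. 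Otherwise $\abs{\vr\wedge\vs_i}\ge k$, and the same inequality gives $\abs{\vr\vee\vs_i}\le\abs{r}+\abs{s_i}-k\le\abs{r}-1$; and submodularity of $\vS_k(G)$ ensures that at least one of the two corners lies in $\vS_k(G)$, so in this case $\vr\vee\vs_i\in\vS_k(G)$. In both subcases $\vr\vee\vs_i$ is an element of $P$ of order at most $\abs{r}$.

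By the induction hypothesis applied to $\vr\vee\vs_i$, there exists a separation $\vr'\in P$ of order at most $\abs{\vr\vee\vs_i}\le\abs{r}$ that is nested with $\sigma$. The ``moreover'' clause is preserved throughout: if $\vs\leq\vr$, then $\vs\le\vr\le\vr\vee\vs_i$, and the induction hypothesis supplies $\vr'$ with $\vs\leq\vr'$. The main obstacle is precisely the order bound: it requires selecting the corner that is in $P$ (via the profile property) \emph{and} simultaneously controlling its order via the efficiency of $\vs_i$; the two subcases of whether $\vr\wedge\vs_i$ belongs to $\vS_k(G)$ are what make the estimate go through unconditionally.
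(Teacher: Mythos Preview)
Your order bound (b) has a genuine gap. You assert that if $\vr\wedge\vs_i\in\vS_k(G)$ then ``by consistency of $Q$ and $Q'$ we have $\vr\wedge\vs_i\in Q$ and $\rv\vee\sv_i\in Q'$''. The first inclusion is fine (since $\vr\wedge\vs_i<\vs_i\in Q$), but the second does not follow from consistency: consistency only forbids $\rv\vee\sv_i\in Q'$ together with some $\vx\in Q'$ satisfying $\vx>\vr\wedge\vs_i$, and you do not have such an $\vx$. Concretely, nothing rules out $\vr\in Q'$; in that case consistency gives $\vr\wedge\vs_i\in Q'$ as well, so the corner $r\wedge s_i$ does \emph{not} distinguish $Q$ and $Q'$, efficiency says nothing about its order, and you cannot conclude $\abs{\vr\vee\vs_i}\le\abs{r}$. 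In particular you do not even know that $\vr\vee\vs_i\in\vS_k(G)$ in this subcase.

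The paper's proof avoids this by making a case distinction on the orientation of $r'$ in $Q$ (its ``by symmetry'' hides the second case). If $\vr'\in Q$ one shows that $\vr'\vee\tv$ distinguishes $Q$ and $Q'$ (using the profile property in $Q$ and consistency in $Q'$), so $\abs{\vr'\wedge\tv}\le\abs{r'}$ and one passes to $\vr'\wedge\tv$; if instead $\rv'\in Q$ one gets $\abs{\vr'\vee\vt}\le\abs{r'}$ and passes to $\vr'\vee\vt$. Your argument is precisely the second of these cases, but you have asserted it unconditionally. To repair your induction you must branch on how $Q'$ (equivalently $Q$) orients $r$ and be prepared to replace $\vr$ by $\vr\wedge\sv_i$ rather than $\vr\vee\vs_i$; note that this second corner also lies in $P$ by consistency (as $\vr\wedge\sv_i<\vr\in P$), and the ``moreover'' clause then needs the extra observation that $\vs\le\sv_i$, which follows from $\vs\le\vr$, the nestedness of $s$ with $s_i$, and the fact that $r$ crosses $s_i$.
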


    \begin{claimproof}
        Let $\vr' (\geq \vs)$ be a separation of order $\leq \abs{r}$ which is nested with as many separations in $\sigma$ as possible. We claim that $\vr'$ is nested with $\sigma$, which then clearly implies the assertion. For this, suppose for a contradiction that $\vr'$ crosses some $\vt \in \sigma$. By the assumption on~$\sigma$, there exists a pair of $k$-profiles $Q \ni \tv$ and $Q' \ni \vt$ in $G$ such that $t$ distinguishes them efficiently. Since $Q$ is a $k$-profile, it contains an orientation of~$r'$; by symmetry we may assume that $\vr' \in Q$. If $\vr' \vee \tv \in \vS_k(G)$, then $\vr' \vee \tv \in Q$ and $\rv' \wedge \vt \in Q'$ as $Q$ and $Q'$ are $k$-profiles, and thus $\vr \vee \tv$ distinguishes~$Q$ and~$Q'$. Since $t$ efficiently distinguishes~$Q$ and~$Q'$, this implies that $\vr' \vee \tv$ has order $\geq \abs{t}$, and hence, by submodularity, $\vr' \wedge \tv$ has order $\leq \abs{r'}$. Then $\vr' \wedge \tv \in \tau$ because $\tau$ is a profile, which contradicts the choice of $\vr'$ since $\vr' \wedge \tv$ is nested with one separation in $\sigma$ more than $\vr'$ by \cref{lem:Fischlemma} (and still $\vs \leq \vr' \wedge \tv$ as $\vs \leq \vr$ and $s$ is nested with $t$).
    \end{claimproof}

    By \cref{claim:ProofOfReflemEssStarsGraphs1}, we may assume that $\{A,B\}$ is nested with $\sigma$. Then, by applying \cref{prop:MinOrderSepsAreCloselyRelated} to $\vs := (C', D')$ and $\vs' := (A,B)$, we obtain that the star
    \[
    \sigma'' := \{(A,B)\} \cup \{(C \cap B, D \cup A) : (C,D) \in \sigma' \setminus \{(C', D')\}\}
    \]
    is a subset of $\vS_k(G)$, which by the consistency of $\tau$ implies that $\sigma'' \subseteq \tau$. Moreover, we have $\sigma \leq \sigma''$ as $\sigma \leq \sigma'$ and $(A, B)$ is nested with $\sigma$. 
    We claim that~$\sigma''$ witnesses that~$\sigma'$ does not satisfy \cref{itm:ProofOfReflemEssStarsGraphs2}, which then contradicts the choice of~$\sigma'$ and thus concludes the proof. By \cref{prop:MinOrderSepsAreCloselyRelated}, $(A, B)$ is closely related to~$\tau$, and if $(C,D) \in \sigma'$ is closely related to~$\tau$, then $(C \cap B, D \cup A)$ is closely related to $\tau$, too. Thus, there are fewer separations in~$\sigma''$ that are not closely related to~$\tau$ than in~$\sigma'$. Therefore, we are left to show that $\sigma''$ satisfies \ref{itm:ProofOfReflemEssStarsGraphs1}. By the argument above, we have $\sigma \leq \sigma''$, so we only need to show that the interior $X''$ of $\sigma''$ has at most the size of the interior $X'$ of $\sigma'$. By definition, we have
    \[
        X'' = B \cap \bigcap_{(C,D) \in \sigma'} (D \cup A) = B \cap (X' \cup A) = (B \cap X') \cup (B \cap A) = (B \cap X')\; \dot\cup\; ((B \cap A)\setminus X').
    \]
    Since $\{A,B\}$ is a separation of $G$, we have $X' = (X' \cap B)\; \dot\cup\; (X' \cap (A\setminus B))$ and thus
    \[
    \vert X''\vert = \vert X' \vert - \vert (A \setminus B)\cap X'\vert + \vert (A \cap B) \setminus X'\vert.
    \]
    So we are done if $\abs{(A \cap B)\setminus X'} \leq \abs{(A\setminus B) \cap X'}$. Set $\rho := \sigma' \setminus \{(C', D')\}$.
    By the choice of~$(A, B)$, the separation $(A, B) \wedge \bigwedge_{(C,D) \in \rho} (D, C)$ has at least order $\abs{A \cap B}$, and thus
    \[
    \big\vert A \cap B\big\vert \leq \Big\vert A \cap \bigcap_{(C,D) \in \rho} D \cap \Big(B \cup \bigcup_{(C,D) \in \rho} C\Big)\Big\vert = \Big\vert\Big(A \cap B \cap \bigcap_{(C,D) \in \rho} D\Big) \cup \Big(A \cap \bigcap_{(C,D) \in \rho} D \cap \bigcup_{(C,D) \in \rho} C\Big)\Big\vert.
    \]
    Since $\sigma'$ is a star, we have $C \subseteq D'$ for every $(C,D) \in \rho$, and hence $\bigcup_{(C,D) \in \rho} C \subseteq D'$. Moreover, by the choice of $(A,B)$, we have $B \subseteq D'$, and thus
    \[
    \vert A \cap B\big\vert \leq \vert(A \cap B \cap X') \cup (A \cap X')\vert = \vert A \cap B \cap X'\vert + \vert(A\setminus B) \cap X'\vert.
    \]
    Combining this inequality with $\abs{A \cap B} = \abs{A \cap B \cap X'} + \abs{(A \cap B) \setminus X'}$ yields that
    \[
    \vert(A\setminus B) \cap X' \vert \geq \vert A \cap B\vert - \vert A \cap B \cap X'\vert = \vert A \cap B\vert - (\vert A \cap B\vert - \vert(A \cap B) \setminus X'\vert) = \vert(A \cap B) \setminus X'\vert,
    \]
    which concludes the proof.
\end{proof}

With \cref{lem:ReflemEssStarsGraphs} at hand \cref{thm:GraphTanglesToTNestedSetVersion} follows immediately:

\begin{proof}[Proof of \cref{thm:GraphTanglesToTNestedSetVersion}]
    Apply \cref{thm:JERefining} to $\tilde{N}$ to obtain a nested set $N' \subseteq S_k(G)$ with $\tilde{N} \subseteq N'$ which satisfies \cref{itm:GraphTanglesToTNSiness}. Then, apply \cref{lem:ReflemEssStarsGraphs} to the essential nodes of $N'$.
\end{proof}

\begin{proof}[Proof of \cref{thm:GraphTanglesToTTDVersion}]
Apply \cref{thm:GraphTanglesToTNestedSetVersion} to the set of separations induced by $(\tilde{T}, \tilde{\cV})$. 
\end{proof}

Recall that the essential parts of the \td\ from \cref{thm:GraphTanglesToTTDVersion} are as small as possible so that they are still home to their tangle. Thus, this refinement is optimal in that it decreases the size of the essential parts as much as possible. 

Now, one may ask what else could be said about the essential parts of that refinement besides their size being as small as possible. Our main reason for decreasing the size of the parts was to more precisely exhibit where the tangles are located in the graph, so it seems natural to ask whether the vertices in the essential parts in some sense `belong' to the tangle living in that part, and additionally whether all vertices that `belong' to a tangle are contained in the part which is home to that tangle. 

In general, this question cannot be answered because tangles are, by nature, a somewhat fuzzy object, which is, at least in general, not related to a specific set of vertices. So usually it cannot be determined at all whether a certain vertex `belongs' to a tangle or not. Thus, in general, we also cannot evaluate whether the essential parts contain precisely those vertices which `belong' to that tangle.

However, in the following, we present two properties of the refined \td\ from \cref{thm:GraphTanglesToTTDVersion} that we believe validate that its essential parts, at least in many cases, `closely correspond' to the tangle they are home to. 
For this, let $G$ be some graph, and let $(T, \cV)$ be the \td\ which one obtains by applying \cref{thm:GraphTanglesToTTDVersion} to any \td\ of $G$ which efficiently distinguishes all the regular $k$-profiles in $G$. (Such \tds\ exist by \cite[Theorem~3.6]{ProfilesNew}.)

Then each part of $(T, \cV)$ of size $> 3k-3$ is home to a $k$-tangle, which it then `witnesses' and `induces'. More precisely, let us say that a subgraph $H$ of $G$ \emph{witnesses} a $k$-tangle $\tau$ if for every subset $\{(A_i, B_i) : i \in [3]\}$ of~$\tau$ of at most three elements we have $\bigcup_{i \in [3]} G[A_i \cap V(H)] \neq H$. Further, a set~$U$ of vertices of~$G$ \emph{induces} a $k$-profile~$P$ if for every separation $(A,B) \in P$ we have $\abs{A \cap U} < \abs{B \cap U}$.

If a part $V_t \in \cV$ has size $> 3k-3$, then $\sigma_t$ cannot be a star in $\cP_{S_k(G)} \subseteq \cT_k$, and thus by \ref{itm:GraphTanglesToTTDiness}, $V_t$ is home to some regular $k$-profile~$\tau$ in $G$. 
By \ref{itm:GraphTanglesToTTDess} and the following lemma, it then follows that $\abs{A \cap V_t} < k$ for every $(A,B) \in \tau$. 

\begin{LEM}{\cite[Proof of Theorem~12]{Focus}} \label{lem:IntersectionOfSmallSideWithMinStar}
    Let $k \in \N$, let $\cP$ be some set of $k$-profiles in a graph $G$, and let~$P \in \cP$. Further, let $\sigma \subseteq P$ be a star whose interior is of smallest size among all stars in~$P$ that are exclusive for $\cP$. Then $\abs{A \cap \interior(\sigma)} < k$ for all $(A,B) \in P$.
\end{LEM}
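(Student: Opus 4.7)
The plan is to argue by contradiction. Write $\sigma = \{(C_1,D_1), \dots, (C_n,D_n)\}$ and $X = \interior(\sigma) = \bigcap_i D_i$. Suppose $\sigma$ has minimum-size interior among $P$-exclusive stars in $P$, yet some $(A,B) \in P$ satisfies $\abs{A \cap X} \geq k$. The crucial elementary estimate is
\[
\abs{X \setminus B} \geq \abs{X \cap A \setminus B} \geq \abs{A \cap X} - \abs{A \cap B} \geq k - (k-1) = 1,
\]
so $B \cap X \subsetneq X$. The goal is to exhibit a $P$-exclusive star $\sigma'' \subseteq P$ with interior exactly $B \cap X$, contradicting the minimality of $\abs{X}$.

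As a preparatory step I would reduce to the case where $(A,B)$ is nested with every $(C_i,D_i) \in \sigma$. If $(A,B)$ crosses some $(C,D) \in \sigma$, I replace it with whichever of the two corners $(A \cup C, B \cap D) = (A,B) \vee (C,D)$ or $(A \cap D, B \cup C) = (A,B) \wedge (D,C)$ lies in $\vS_k(G)$. Both candidates lie in $P$ — the first by the profile property applied to $(A,B), (C,D) \in P$, the second by consistency since it is $\leq (A,B)$ — both preserve $\abs{A' \cap X} \geq \abs{A \cap X} \geq k$ because $X \subseteq D$, and both are nested with $(C,D)$. By \cref{lem:Fischlemma} they remain nested with any $\sigma$-element already nested with $(A,B)$, so after finitely many replacements $(A,B)$ becomes nested with all of $\sigma$.

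With $(A,B)$ now nested with $\sigma$, I case-analyze its relation to each $(C_i,D_i)$. If $(A,B) \leq (C_i,D_i)$ for some $i$, then $A \subseteq C_i$ gives $A \cap X \subseteq C_i \cap D_i$, forcing $\abs{A \cap X} < k$ — a contradiction. If $(D_i,C_i) < (A,B)$ strictly, the consistency axiom applied to $(C_i,D_i), (A,B) \in P$ is violated. Hence for each $i$ one of $(A,B) \leq (D_i,C_i)$ or $(A,B) \geq (C_i,D_i)$ holds; let $I$ and $J$ be the respective index sets (necessarily disjoint, as any $i$ in both would force $A \subseteq B$ and hence $\abs{A \cap X} \leq \abs{A} < k$). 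Define
\[
\sigma'' := \{(A,B)\} \cup \{(C_i,D_i) : i \in I\}.
\]
Then $\sigma''$ is a star in $P$: the element $(A,B)$ satisfies $(A,B) \leq (D_i,C_i) = (C_i,D_i)^*$ for $i \in I$, and $\{(C_i,D_i) : i \in I\}$ is a substar of $\sigma$. For $P$-exclusivity: if $P' \in \cP$ contains $\sigma''$, then for each $j \in J$, consistency applied to $(C_j,D_j) < (A,B) \in P'$ forces $(C_j,D_j) \in P'$, so $\sigma \subseteq P'$ and thus $P' = P$ by the $P$-exclusivity of $\sigma$. Finally, $B \subseteq D_j$ for $j \in J$ gives $\interior(\sigma'') = B \cap \bigcap_{i \in I} D_i = B \cap X$, which by the opening estimate is strictly smaller than $X$ — contradicting the minimality of $\sigma$.

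The main obstacle lies in the reduction step. By submodularity, one of $(A \cup C, B \cap D)$ or $(A \cap C, B \cup D)$ is in $\vS_k(G)$, and similarly one of $(A \cup D, B \cap C)$ or $(A \cap D, B \cup C)$; however, the two corners that preserve $\abs{A' \cap X} \geq k$ — namely $(A \cup C, B \cap D)$ and $(A \cap D, B \cup C)$ — can simultaneously exceed order $k$, leaving only $(A \cap C, B \cup D) \in P$ with $\abs{A' \cap X} < k$ and $(A \cup D, B \cap C) \in \vS_k(G) \setminus P$. Handling this bad case cleanly — perhaps by exploiting the fact that both $(A \cap C, B \cup D)$ and $(B \cap C, A \cup D)$ are then in $P$, or by modifying $\sigma$ rather than $(A,B)$ — is the delicate heart of the argument.
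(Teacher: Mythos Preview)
The paper does not supply its own proof of this lemma; it merely cites \cite{Focus}*{Proof of Theorem~12} and remarks that the argument given there (for stars of minimum interior among \emph{all} stars in~$P$) carries over verbatim to the exclusive case. So there is no in-paper argument to compare against directly.

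Your overall architecture is sound: from $|A\cap X|\ge k>|A\cap B|$ one gets $X\setminus B\neq\emptyset$, and any $P$-exclusive star with interior contained in $B\cap X$ contradicts minimality. Your post-reduction case analysis and the construction of~$\sigma''$ are correct; in particular, the exclusivity argument via consistency (downward closure of orientations) is fine.

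The gap you flag in the reduction step is genuine and is the real content of the lemma. Submodularity pairs the corner $(A\cup C,B\cap D)$ with $(A\cap C,B\cup D)$, and independently pairs $(A\cap D,B\cup C)$ with $(A\cup D,B\cap C)$. The two corners that preserve the invariant $|A'\cap X|\ge k$ lie in \emph{different} pairs, so nothing prevents both ``bad'' corners $(A\cap C,B\cup D)$ and $(A\cup D,B\cap C)$ from being the ones forced into $S_k(G)$. In that situation neither of your proposed moves closes the gap: replacing $(A,B)$ by $(A\cap C,B\cup D)$ loses the invariant, while modifying~$\sigma$ by splitting $(C,D)$ into the two available corners $(A\cap C,B\cup D)$ and $(B\cap C,A\cup D)$ produces a star with interior $X\cup\bigl(A\cap B\cap\bigcap_{j\neq i}D_j\bigr)\supseteq X$, not smaller, and whose $P$-exclusivity is no longer automatic (consistency gives downward, not upward, closure). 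Some additional extremal choice of $(A,B)$ --- analogous to how efficiency is exploited in the proof of \cref{lem:StarWithMinIntNested} --- is needed to force a usable corner, and without it the proposal remains incomplete at precisely the point you identify.
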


\noindent Note that in \cite{Focus} the statement of \cref{lem:IntersectionOfSmallSideWithMinStar} is shown for stars in $P$ whose interior is of smallest size among all stars in $P$. However, it is easy to see that the same proof also yields the statement of \cref{lem:IntersectionOfSmallSideWithMinStar}, where the interior of $\sigma$ is only of smallest size among all \emph{exclusive} stars in $P$.
\medskip

By \cref{lem:IntersectionOfSmallSideWithMinStar} we have $\abs{A \cap V_t} < k \leq (3k-2) - (k-1) \leq \abs{V_t} - \abs{A \cap V_t} = \abs{(B \setminus A) \cap V_t} \leq \abs{B \cap V_t}$, and hence~$V_t$ induces~$\tau$. Moreover, for every subset $\{(A_i, B_i) : i \in [3]\} \subseteq \tau$, we have $\bigcup_{i \in [3]} G[A_i \cap V_t] \neq G[V_t]$ since $\abs{V_t \cap \bigcap_{i \in [3]} (B_i \setminus A_i)} \geq \abs{V_t} - \abs{A_1} - \abs{A_2} - \abs{A_3} \geq (3k-2) - 3(k-1) = 1$. Hence,~$\tau$ is in fact a $k$-tangle in $G$ as witnessed by~$G[V_t]$.

Therefore, every large enough part of $(T, \cV)$ induces a $k$-tangle. Conversely, if a $k$-tangle is induced by some large and highly connected substructure of $G$, does that substructure appear as a part of~$(T, \cV)$? In general, this will not be the case even if a $k$-tangle is induced by a large clique, since even a clique must not be equal to a part of any \td\ of adhesion~$<k$ \cite{CDHH13CanonicalParts}. However, if a $k$-tangle is induced by a `$k$-block', then the $k$-block will be equal to a part of~$(T, \cV)$ if there exists any \td\ of adhesion~$<k$ at all that contains the $k$-block as a part.

For some $k \in \N$, a \emph{$k$-block} in a graph $G$ is a maximal set $b$ of at least $k$ vertices such that no two vertices $v, w \in b$ can be separated in $G$ by fewer than $k$ vertices.
It is straight forward to check that every $k$-block \emph{induces} a regular $k$-profile by orienting $\{A,B\} \in S_k(G)$ as $(A,B)$ if and only if~$b \subseteq B$. 
A $k$-block $b$ in $G$ is \emph{separable} if it is the interior of some star in $S_k(G)$, i.e.\ if there exists a star $\sigma \subseteq \vS_k(G)$ such that $\interior(\sigma) = b$. 

Now suppose that $b$ is a separable $k$-block in $G$, and let $\tau$ be the $k$-profile induced by~$b$. Then the star~$\rho$ that consists precisely of the separations $(V(C) \cup N(C), V(G) \setminus V(C))$ for components~$C$ of~$G-b$ is in~$\vS_k(G)$ \cite[Lemma~4.1]{CG14:isolatingblocks}, and it is easy to see that $\interior(\rho) = b$ and that~$\tau$ is the unique $k$-profile which contains~$\rho$.
Moreover, we have $b \subseteq B$ for every $(A,B) \in \tau$, and thus $b \subseteq \interior(\sigma)$ for every star $\sigma \subseteq \tau$. Hence, if~$b$ is separable, then every star in~$\tau$ whose interior is of smallest size among all exclusive stars in~$\tau$ will be equal to~$b$. 
It follows by condition \ref{itm:GraphTanglesToTTDess} that the essential part of the \td\ $(T, \cV)$ from \cref{thm:GraphTanglesToTTDVersion} which is home to $\tau$ is equal to $b$. Thus, every separable $k$-block in~$G$ appears as a part of~$(T, \cV)$.  
All in all, we have the following theorem:

\begin{THM}\label{thm:TreeDecompWithNiceProperties}
    Let $G$ be a graph and $k \in \N$. Then there exists a \td\ $(T, \cV)$ of~$G$ of adhesion~$<k$ which has the following properties: 
    \begin{enumerate}[label=\rm{(\roman*)}]
        \item\label{itm:TreeDecompWithNicePropertiesEff} $(T, \cV)$ efficiently distinguishes all regular $k$-profiles in $G$;
        \item\label{itm:TreeDecompWithNicePropertiesSmallOrInduce} each part of $(T, \cV)$ of size $> 3k-3$ is home to a $k$-tangle, which it then witnesses and induces;
        \item\label{itm:TreeDecompWithNicePropertiesBlocks} every separable $k$-block in $G$ appears as a part of $(T, \cV)$. \qed
    \end{enumerate}
\end{THM}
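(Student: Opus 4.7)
The plan is to assemble the theorem from the tools already developed in the paper, essentially by making explicit the informal argument given in the paragraphs preceding the statement. First I would invoke \cite{ProfilesNew}*{Theorem 3.6} to obtain some \td~$(\tilde{T}, \tilde{\cV})$ of $G$ that efficiently distinguishes all regular $k$-profiles in $G$. Taking $\cF := \overline{\cT_k}$ to be the friendly set of stars derived from $\cT_k$ via \cref{lem:MakingASetFFriendly} (so that the $\cF$-tangles of $S_k(G)$ are precisely the regular $k$-profiles), I would then apply \cref{thm:GraphTanglesToTTDVersion} to $(\tilde{T}, \tilde{\cV})$ to produce a refinement $(T, \cV)$ satisfying the two conditions of that theorem. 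Since refinement preserves the separations induced by $\tilde T$, the new \td~$(T,\cV)$ still efficiently distinguishes all regular $k$-profiles, yielding \ref{itm:TreeDecompWithNicePropertiesEff}, and its adhesion is less than $k$ because the separations arise from $\vS_k(G)$.

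For \ref{itm:TreeDecompWithNicePropertiesSmallOrInduce}, I would argue as in the discussion. If $|V_t| > 3k-3$, then the star $\sigma_t$ associated with $V_t$ has interior too large to be a triple from $\cT_k$ or a profile-star from $\cP_{S_k(G)}$; hence by \cref{thm:GraphTanglesToTTDVersion}\ref{itm:GraphTanglesToTTDiness}, $V_t$ must be essential, so it is home to some regular $k$-profile $\tau$. By \cref{thm:GraphTanglesToTTDVersion}\ref{itm:GraphTanglesToTTDess}, the interior $V_t$ is of smallest size among exclusive stars in $\tau$, so \cref{lem:IntersectionOfSmallSideWithMinStar} yields $|A \cap V_t| < k$ for every $(A,B) \in \tau$. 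Since $|V_t| \geq 3k-2$, this forces $|A \cap V_t| < |B \cap V_t|$, so $V_t$ induces $\tau$; and for any three separations $(A_i,B_i) \in \tau$ the inclusion-exclusion estimate $|V_t \cap \bigcap_i (B_i \setminus A_i)| \geq (3k-2) - 3(k-1) \geq 1$ shows that $G[V_t]$ witnesses $\tau$, which upgrades $\tau$ to a genuine $k$-tangle.

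For \ref{itm:TreeDecompWithNicePropertiesBlocks}, let $b$ be a separable $k$-block and let $\tau$ be the regular $k$-profile it induces. I would use \cite{CG14:isolatingblocks}*{Lemma 4.1} to obtain the canonical star $\rho \subseteq \vS_k(G)$ around $b$ whose interior is exactly $b$ and which is contained only in $\tau$; in particular $\rho$ is exclusive for the set of regular $k$-profiles. On the other hand, $b \subseteq B$ for every $(A,B) \in \tau$, so $b$ is contained in the interior of every star in $\tau$. Consequently, the minimum interior size among exclusive stars in $\tau$ is exactly $|b|$, and is realised only by stars with interior equal to $b$. Applying \cref{thm:GraphTanglesToTTDVersion}\ref{itm:GraphTanglesToTTDess} to the essential part $V_t$ home to $\tau$ then forces $V_t = b$, so $b$ appears as a part of $(T,\cV)$.

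The proof as planned is essentially bookkeeping: all the technical content lives in \cref{thm:GraphTanglesToTTDVersion} and \cref{lem:IntersectionOfSmallSideWithMinStar}. The only subtle step is the passage from ``$\sigma_t$ is not in $\cF$'' to ``$V_t$ is large enough to be home to a $k$-tangle''; this just uses that triple-stars of size $\leq 3k-3$ belong to the profile-generating set $\cP_{S_k(G)}$ absorbed into the friendly closure $\overline{\cT_k}$, so any part with $|V_t| > 3k-3$ must be essential. No new combinatorial obstacle arises.
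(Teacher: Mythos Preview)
Your proposal is essentially the paper's own argument, which is given in the paragraphs immediately preceding the theorem (the \qed after the statement signals that the proof is that discussion). The structure---start from a \td\ from \cite{ProfilesNew}*{Theorem 3.6}, apply \cref{thm:GraphTanglesToTTDVersion}, deduce \ref{itm:TreeDecompWithNicePropertiesSmallOrInduce} via \cref{lem:IntersectionOfSmallSideWithMinStar} and the counting estimate, and deduce \ref{itm:TreeDecompWithNicePropertiesBlocks} via \cite{CG14:isolatingblocks}*{Lemma 4.1}---matches exactly.

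There is one slip worth flagging: you take $\cF=\overline{\cT_k}$ and assert that its tangles are precisely the regular $k$-profiles. By the description after \cref{lem:MakingASetFFriendly}, the $\overline{\cT_k}$-tangles are the regular profiles that are also $\cT_k$-tangles, i.e.\ the $k$-tangles; these form a (possibly proper) subclass of the regular $k$-profiles. With that $\cF$, the hypothesis of \cref{thm:GraphTanglesToTTDVersion} that every edge of $\tilde T$ efficiently distinguishes a pair of $\cF$-tangles need not be met by the \td\ from \cite{ProfilesNew}, whose edges are only guaranteed to efficiently distinguish pairs of regular $k$-profiles. The paper avoids this by (implicitly) choosing $\cF$ so that the $\cF$-tangles are exactly the regular $k$-profiles---this is why its argument for \ref{itm:TreeDecompWithNicePropertiesSmallOrInduce} first lands on a regular $k$-profile $\tau$ and then upgrades it to a $k$-tangle via the witnessing estimate. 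Replacing your $\cF$ by the friendly closure of $\cP_{S_k(G)}$ (together with the requisite singletons) fixes this; the rest of your argument goes through unchanged.
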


\noindent We remark that the existence of \tds\ satisfying \ref{itm:TreeDecompWithNicePropertiesEff} and \cref{itm:TreeDecompWithNicePropertiesBlocks} is already known due to Carmesin and Gollin \cite{CG14:isolatingblocks} (see also \cite{SARefiningBlocks} for a short proof). What is new is property \ref{itm:TreeDecompWithNicePropertiesSmallOrInduce}: that all the inessential parts of the \td\ $(T, \cV)$ are small, while all of its parts that are not small induce $k$-tangles.

\section{Refining trees of tangles in abstract separation systems}\label{sec:RefiningEssentialStars}

In this section we extend \cref{thm:GraphTanglesToTTDVersion} to abstract separation systems. For this note that condition \ref{itm:GraphTanglesToTNSiness} of \cref{thm:GraphTanglesToTNestedSetVersion} is already formulated in a way which translates verbatim to abstract separation systems, while its condition \ref{itm:GraphTanglesToTNSess} makes use of the specific properties of graphs. More precisely, \ref{itm:GraphTanglesToTNSess} refers to the size of the interiors of the essential nodes, i.e.\ the number of vertices contained in them. Since vertices need not exist in the more general setting of abstract separation systems, we need to adapt this condition in order to generalize \cref{thm:GraphTanglesToTTDVersion}. 

For this, recall that the purpose of condition \ref{itm:GraphTanglesToTNSess} is to ensure that the refinement is optimal. More precisely, \ref{itm:GraphTanglesToTNSess} makes sure that the interiors of the essential nodes of the refined nested set are as small as possible and thereby guarantees that the essential nodes are as `close' to the tangle they are home to as possible. Here `small' refers to the number of vertices in the interiors. 
Since we cannot rely on the existence of vertices, as mentioned above, we need to find a different measure to determine the `closeness' of the essential nodes to the tangle they are home to. 
In particular, as the notion of an `interior' of a node does not extend to abstract separation systems, this measure needs to refer to the nodes themselves.

Since the nodes of a nested set are stars, we may consider the partial order on them.
For this,  recall that if $S$ is some separation system, then the partial order on $\vS$ induces a partial order on the set of all proper stars in $\vS$ (see \cref{subsec:SepSys}). 
Clearly, for a given profile $P$ of $S$, those stars in $P$ that are maximal among all stars in $P$ are closest to $P$.
We thus replace \ref{itm:GraphTanglesToTNSess} in \cref{thm:GraphTanglesToTNestedSetVersion} with the condition that every essential node is a maximal star in the tangle living at it.
We then obtain \cref{thm:abstracttanglesToT}, which is the main result of this section, and which we restate here. 
For this, set $\cT' := \{\{\vr, \vs\} \subseteq \vS : \vr \leq \sv \text{ and } \vr \vee \vs \text{ is co-small}\}$.

\begin{customthm}{\ref{thm:abstracttanglesToT}}
	\emph{Let $\vS$ be a submodular separation system inside some distributive universe, and let $\cF$ be a friendly set of stars in $\vS$ with $\cT' \subseteq \cF$. Further, let $\tilde{N} \subseteq S$ be a nested set that distinguishes all~$\cF$-tangles of $S$, and suppose that every separation in $\tilde{N}$ is good for the set of all $\cF$-tangles of $S$. Then there is a nested set $N \subseteq S$ with $\tilde{N} \subseteq N$ such that
	\begin{enumerate}[label=\rm{(\roman*)}]
		\item every inessential node of $N$ is an element of $\cF$;
		\item every essential node of $N$ is maximal in the $\cF$-tangle living at it. 
	\end{enumerate}}
\end{customthm}

\noindent Note that by \cite[Theorem~5.8]{SARefiningInessParts} there exists a nested set $\tilde{N}$ of separations which satisfies the premise of \cref{thm:abstracttanglesToT}. This set is even canonical, but in a slightly weaker sense than usual. 

Further, recall that `good' generalizes the notion of efficiency to separations systems that come without an order function (see \cref{sec:RefiningInessStars}).
\medskip

Before we prove \cref{thm:abstracttanglesToT}, let us first make some remarks about it.
The set $N$ refines the nodes of $\tilde{N}$ in that it `splits' them into smaller nodes which are each either a star in~$\cF$ if they are inessential, or `close' to the tangle they are home to.
More precisely,~\cref{itm:abstracttanglesToTiness} implies that every inessential node of the refinement is in fact too small to be home to a tangle, while~\cref{itm:abstracttanglesToTess} guarantees that the essential nodes are as `close' to the tangle they are home to as possible, where the `closeness' is measured by the partial order on the set of stars. 
In particular, condition \cref{itm:abstracttanglesToTess} ensures that the essential nodes of the refinement cannot be refined any further, and hence the refinement from \cref{thm:abstracttanglesToT} is optimal in that sense. 

An \emph{abstract tangle} of a separation system $S$ is a $\tilde{\cT}$-tangle where $\tilde{\cT} := \big\{\{\vr, \vs, \vt\} \subseteq \vS: \vr \vee \vs \vee \vt \text{ is co-small}\big\}$ \cite{AbstractTangles}.
Since $\tilde{\cT}^*$ (the set of all stars in $\tilde{\cT}$) is closed under shifting and $\tilde{\cT}^* \supseteq \cT'$, we can apply \cref{thm:abstracttanglesToT} to the set of all abstract tangles if $S$ lies inside some distributive universe.

Moreover, since every separation system of the form $S_k$ inside a submodular universe is submodular, \cref{thm:abstracttanglesToT} can also be applied to such systems. In fact, we obtain the following corollary by applying it to the nested set which Diestel, Hundertmark and Lemancyzk \cite[Theorem~3.6]{ProfilesNew} constructed:

\begin{COR}\label{cor:ToTSubmodularUniverse}
	Let $\vU$ be a distributive, submodular universe of separations, and let $k \in \N$. Further, let~$\cF$ be a friendly set of stars in $\vS_k$ such that $\cT' \subseteq \cF$, and let $\cP$ be the set of all $\cF$-tangles of $S_k$.
	Then there exist nested sets $\tilde{N} := \tilde{N}(\cP) \subseteq N \subseteq S_k$ such that
	\begin{enumerate}[label=\rm{(\roman*)}]
		\item\label{itm:ToTSubmodUni1} $\tilde{N}$ efficiently distinguishes all tangles in $\cP$;
		\item\label{itm:ToTSubmodUni2} for every automorphism $\alpha$ of $\vU$ we have $\tilde{N}(\cP^\alpha) = \tilde{N}(\cP)^\alpha$;
		\item\label{itm:ToTSubmodUni3} every inessential node in $N$ is a star in $\cF$;
		\item\label{itm:ToTSubmodUni4} every essential node in $N$ is maximal in the $\cF$-tangle living at it.
	\end{enumerate}
\end{COR}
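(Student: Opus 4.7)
The corollary will be deduced directly by combining the canonical tangle-distinguishing nested set constructed by Diestel, Hundertmark and Lemanczyk \cite{ProfilesNew}*{Theorem 3.6} with Theorem \ref{thm:abstracttanglesToT}. Essentially, \cite{ProfilesNew}*{Theorem 3.6} supplies $\tilde N$ together with properties \ref{itm:ToTSubmodUni1} and \ref{itm:ToTSubmodUni2}, and Theorem \ref{thm:abstracttanglesToT} then refines it into an $N \supseteq \tilde N$ that satisfies \ref{itm:ToTSubmodUni3} and \ref{itm:ToTSubmodUni4}.

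My first step would be to take $\tilde N := \tilde N(\cP)$ to be the nested set provided by \cite{ProfilesNew}*{Theorem 3.6}. By construction this set efficiently distinguishes every two distinct tangles in $\cP$, which is exactly \ref{itm:ToTSubmodUni1}. Moreover, because that construction depends only on $\cP$ and on the algebraic data of $\vU$, it commutes with every automorphism $\alpha$ of $\vU$, so $\tilde N(\cP^\alpha) = \tilde N(\cP)^\alpha$, giving \ref{itm:ToTSubmodUni2}.

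Next I would verify the hypotheses of Theorem \ref{thm:abstracttanglesToT} for $\vS := \vS_k$, the friendly set $\cF$, and this $\tilde N$. The system $\vS_k$ is submodular inside the distributive universe $\vU$, since the submodular inequality of the order function of $\vU$ forces at least one of $\vr \vee \vs$ and $\vr \wedge \vs$ to lie in $\vS_k$ for every $\vr, \vs \in \vS_k$. The set $\cF$ is friendly and contains $\cT'$ by hypothesis, which is exactly the containment required in the statement of Theorem \ref{thm:abstracttanglesToT}. Finally, every separation in $\tilde N$ efficiently distinguishes some pair of tangles in $\cP$, and hence is good for $\cP$ by \cite{SARefiningInessParts}*{Proposition 3.4}, as recalled in Section \ref{sec:RefiningInessStars}.

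Applying Theorem \ref{thm:abstracttanglesToT} then produces a nested set $N \subseteq S_k$ with $\tilde N \subseteq N$ whose inessential nodes are all stars in $\cF$, giving \ref{itm:ToTSubmodUni3}, and whose essential nodes are each maximal in the $\cF$-tangle living at them, giving \ref{itm:ToTSubmodUni4}. Since the corollary is a straightforward repackaging of two earlier results, I do not expect any real obstacle; the only minor point to check carefully is that \emph{good} in the sense used by Theorem \ref{thm:abstracttanglesToT} matches the \emph{efficient} distinguishing provided by \cite{ProfilesNew}*{Theorem 3.6}, which is precisely the content of the cited proposition.
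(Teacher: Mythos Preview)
Your proposal is correct and follows essentially the same approach as the paper's own proof: take $\tilde N$ from \cite{ProfilesNew}*{Theorem 3.6} to obtain \ref{itm:ToTSubmodUni1} and \ref{itm:ToTSubmodUni2}, use \cite{SARefiningInessParts}*{Proposition 3.4} to pass from efficiency to goodness, and then apply \cref{thm:abstracttanglesToT} to obtain \ref{itm:ToTSubmodUni3} and \ref{itm:ToTSubmodUni4}. Your write-up even spells out the submodularity of $\vS_k$ explicitly, which the paper leaves implicit.
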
	
\begin{proof}
    By \cite[Theorem~3.6]{ProfilesNew} there exists a nested set $\tilde{N} \subseteq S_k$ which satisfies \cref{itm:ToTSubmodUni1} and \cref{itm:ToTSubmodUni2} such that every separation in $\tilde{N}$ distinguishes some pair of profiles in $\cP$ efficiently. Since every separation which efficiently distinguishes some pair of profiles in $\cP$ is good for $\cP$ by \cite[Proposition~3.4]{SARefiningInessParts}, applying \cref{thm:abstracttanglesToT} to $\tilde{N}$ yields the existence of a nested set $N \subseteq S_k$ with $\tilde{N} \subseteq N$ that satisfies \cref{itm:ToTSubmodUni3} and \cref{itm:ToTSubmodUni4}.
\end{proof}

Let us now turn to the proof of \cref{thm:abstracttanglesToT}. For this, let a submodular separation system $\vS$, a set $\cF$ of stars in $\vS$ and a nested set $\tilde{N} \subseteq S$ be given which satisfy the premise of \cref{thm:abstracttanglesToT}. 
By \cref{thm:RefiningGoodToTs}, there exists a refinement $N'$ of $\tilde{N}$ which satisfies \cref{itm:abstracttanglesToTiness}, i.e.\ which refines all the inessential nodes of $\tilde{N}$ so that all inessential nodes are stars in~$\cF$.
Therefore, in order to prove \cref{thm:abstracttanglesToT}, we only need to show that we can refine the essential nodes too, i.e.\ that we can `split' the essential nodes into smaller nodes so that all inessential nodes are stars in $\cF$, and the new essential nodes are maximal in the tangle they are home to.
For this, we prove the following variant of \cref{lem:ReflemEssStarsGraphs}:

\begin{LEM}\label{lem:ReflemForEssStars}
    Let $\vS$ be a submodular separation system inside a distributive universe $\vU$, and let $\cF \supseteq \cT'$ be a friendly set of stars in $\vS$. 
	Further, let $\sigma = \{\vs_1, ..., \vs_k\} \subseteq \vS$ be a star which is home to a unique~$\cF$-tangle~$P$ of $S$, and suppose that each $s_i$ is good for the set of all $\cF$-tangles of $S$. 
	Then there exist a maximal star $\sigma' \subseteq P$ and an $S$-tree over~$\cF' := \cF \cup \{\sigma'\} \cup \{\{\sv_1\}, ...,\{\sv_k\}\}$ in which each $\vs_i$ appears as a leaf separation. 
\end{LEM}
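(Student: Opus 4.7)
The plan is to mirror the proof of \cref{lem:ReflemEssStarsGraphs}, using maximality in the partial order on stars in place of minimum interior size. I aim to find a star $\sigma' \subseteq P$ with $\sigma \leq \sigma'$ such that $\sigma'$ is a maximal proper star contained in $P$ and every element of $\sigma'$ is closely related to $P$. Given such a $\sigma'$, the set $N' := \{s : \vs \in \sigma \cup \sigma'\}$ is a nested set whose only essential node is $\sigma'$; every inessential node of $N'$ is a star whose elements' inverses are all closely related to some $\cF$-tangle — the inverses coming from $\sigma'$ lie in $\sigma' \subseteq P$ and are closely related to $P$ by construction, while the inverses coming from $\sigma$ are closely related to some $\cF$-tangle by the goodness hypothesis. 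Applying \cref{lem:reflem} to each such inessential node yields a nested set $N \supseteq N'$ whose induced $S$-tree is the desired one over $\cF \cup \{\sigma'\} \cup \{\{\sv_1\}, \dots, \{\sv_k\}\}$ with each $\vs_i$ appearing as a leaf.

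To construct $\sigma'$, I will apply Zorn's lemma to the poset $\Sigma := \{\rho \subseteq P : \rho \text{ is a proper star with } \sigma \leq \rho\}$ and, among its maximal elements, choose one that also minimizes the number of separations not closely related to $P$. Upper bounds of chains in $\Sigma$ can be constructed from the lattice structure of the distributive universe $\vU$ together with the submodularity of $\vS$ and \cref{lem:Fischlemma}: the elementwise supremum of a chain produces separations in $\vS \cap P$ that are nested with $\sigma$ by \cref{lem:Fischlemma}, yielding a star-shaped upper bound in $\Sigma$. Since $\sigma \in \Sigma$, a maximal element exists.

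The main obstacle is to show this minimum is zero. Assume for contradiction some $\vc \in \sigma'$ is not closely related to $P$: then there is $\vr \in P$ with $\vc \wedge \vr \notin \vS$, whence $\vc \vee \vr \in \vS$ by submodularity and $\vc \vee \vr \in P$ by the profile property. An abstract analog of \cref{claim:ProofOfReflemEssStarsGraphs1} — proved by iteratively replacing $\vr$ with corner separations, using submodularity, \cref{lem:Fischlemma}, and the goodness of each $s_i$ to keep the replacement in $\vS \cap P$ — will let us arrange $\vr \leq \dv$ for every $\vd \in \sigma' \setminus \{\vc\}$. Then an abstract analog of \cref{prop:MinOrderSepsAreCloselyRelated}, assembled from \cref{prop:MaxSepsAreCloselyRelated} (a maximal separation in $P$ nested with an appropriate $Y$ is closely related to $P$) and \cref{prop:InfimaOfClRelSetsAreClRel} (infima with a closely related separation stay in $\vS$), will produce a separation $\va \in P$ closely related to $P$ with $\vc \vee \vr \leq \va$ and $\va \leq \dv$ for every $\vd \in \sigma' \setminus \{\vc\}$. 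The star $\sigma'' := \{\va\} \cup (\sigma' \setminus \{\vc\})$ then lies in $\Sigma$ and is strictly above $\sigma'$ in the poset, contradicting the maximality of $\sigma'$. The abstract versions of \cref{claim:ProofOfReflemEssStarsGraphs1} and \cref{prop:MinOrderSepsAreCloselyRelated} are the key technical hurdles; once they are established, the argument outlined in the first paragraph completes the proof.
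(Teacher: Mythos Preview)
Your strategy has a genuine gap at its heart. The whole plan hinges on finding a star $\sigma'\subseteq P$ that is simultaneously \emph{maximal} in $P$ and \emph{closely related} to $P$; but the paper says explicitly, just before the statement of \cref{lem:NearMaxAndClRelStar}, that it does not know whether such a star always exists. Your proposed contradiction argument does not close this gap.

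Concretely, two steps fail. First, your ``abstract analog of \cref{prop:MinOrderSepsAreCloselyRelated}'' is the crux, and it cannot be assembled from \cref{prop:MaxSepsAreCloselyRelated} and \cref{prop:InfimaOfClRelSetsAreClRel} as you suggest. \cref{prop:MinOrderSepsAreCloselyRelated} picks $\vs'$ of \emph{minimal order} above $\vs$; there is no order function here. To use \cref{prop:MaxSepsAreCloselyRelated} instead you would take a maximal element of $P_Y$ for some nested $Y$ containing $\sigma'\setminus\{\vc\}$, but that proposition requires the inverse of each $\vy\in Y$ to be closely related to some profile --- precisely the property of $\sigma'$ you are trying to establish. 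The argument is circular. Second, your ``abstract \cref{claim:ProofOfReflemEssStarsGraphs1}'' is stated for $\sigma'$ rather than $\sigma$, but the proof of that claim uses that the separations in question are good (efficiently distinguishing), which is assumed only for $\sigma$, not for the unknown $\sigma'$. (Your Zorn step is also unjustified: chains of proper stars have no evident ``elementwise supremum'' in $\vS$, and submodularity says nothing about infinite joins.)

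The paper circumvents all of this by \emph{not} trying to make the maximal star closely related. It introduces the weaker notion of a \emph{near-maximal} star (narrow in $P$, with at most one element below any $\vx\in P$), and proves via an unscrambling procedure (\cref{lem:unscrambling}--\cref{lem:NarrowStaysNarrow}, crucially using distributivity) that a near-maximal star $\sigma'\geq\sigma$ closely related to $P$ exists (\cref{lem:NearMaxAndClRelStar}). One then applies \cref{lem:reflem} to the inessential nodes of $\{s:\vs\in\sigma\cup\sigma'\}$, passes to a maximal star $\sigma''\geq\sigma'$, and observes that near-maximality forces every node of $\{s:\vs\in\sigma'\cup\sigma''\}$ other than $\sigma''$ to be either $\{\sv\}$ with $\vs$ small or a two-element star in $\cT'\subseteq\cF$. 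This is where the hypothesis $\cT'\subseteq\cF$ is used --- a hypothesis your argument never touches, which is itself a sign that something essential is missing.
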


Just as \cref{thm:JERefining} and \cref{lem:ReflemEssStarsGraphs} imply \cref{thm:GraphTanglesToTTDVersion}, their generalizations to abstract separation systems (\cref{thm:RefiningGoodToTs} and \cref{lem:ReflemForEssStars}) imply \cref{thm:abstracttanglesToT}.

\begin{proof}[Proof of \cref{thm:abstracttanglesToT}]
    By applying \cref{thm:RefiningGoodToTs} to $\tilde{N}$ we obtain a refinement $N'$ of $\tilde{N}$ whose inessential nodes are all stars in $\cF$.
    Let $\cP$ be the set of all $\cF$-tangles of $S$, let $P \in \cP$ be given, and let $\sigma$ be the node of $N'$ that is home to $P$.
    By \cref{lem:ReflemForEssStars}, there exists a star $\sigma' \subseteq P$ with $\sigma \leq \sigma'$ that is maximal in $P$, and an $S$-tree over $\cF \cup \{\sigma'\} \cup \{\{\sv\} : \vs \in \sigma\}$ in which each $\vs \in \sigma$ appears as a leaf separation. Let $N_P$ be the set of separations induced by that $S$-tree. Clearly, $N_P$ is nested with~$N'$.
	Setting $N := N' \cup \bigcup_{P \in \cP} N_P$ then yields the claim.
\end{proof}

Let $\sigma \subseteq \vS$ be a star that satisfies the premise of \cref{lem:ReflemForEssStars}, and let $P$ be the unique $\cF$-tangle living at $\sigma$. One particularly natural approach to construct an $S$-tree as in \cref{lem:ReflemForEssStars} is the following.
Take a maximal star $\sigma' \subseteq P$ with $\sigma \leq \sigma'$, and consider the set~$N'_P := \{s : \vs \in \sigma \cup \sigma'\}$. If~$\sigma$ is not already maximal in~$P$, then $\sigma < \sigma'$, which implies that $N'_P$ has some inessential nodes.
As these nodes will in general not be stars in $\cF$, we need to refine them further with $S$-trees as in \cref{lem:reflem}.
If that is possible, i.e.\ if there exists for every inessential node~$\rho$ of~$N'_P$ an $S$-tree over $\cF \cup \{\{\rv\} : \vr \in \rho\}$, then these $S$-trees give rise to one overall $S$-tree as in \cref{lem:ReflemForEssStars}.

However, this seemingly easy approach entails some difficulties. \cref{lem:reflem} relies on the assumption that the inverse of every separation in the inessential star which we want to refine is closely related to some $\cF$-tangle of $S$. Since this assumption is necessary, as pointed out earlier, we cannot pick the star~$\sigma'$ arbitrarily among the maximal stars in $P$. Indeed, as the inverse of every separation in $\sigma' \setminus \sigma$ will be contained in an inessential node of $N'_P$, we have to pick a star $\sigma' \subseteq P$ which is closely related to $P$ in order to be able to refine the newly arising inessential nodes.
Unfortunately though, we do not know whether there always exists a star which is maximal in $P$ and closely related to~$P$. 

Still, we hold on to the strategy of first choosing a suitable star $\sigma'$ in $P$ with $\sigma \leq \sigma'$, and then refining the inessential nodes of $N'_P$.
To make this strategy work, we relax the conditions that we impose on the star~$\sigma'$. 
Instead of requiring $\sigma'$ to be a maximal star in~$P$, we only ask $\sigma'$ to be `near-maximal' in~$P$, a slightly weaker property that will allow us to find a star $\sigma'$ which is `near-maximal' in and closely related to~$P$. Let us first make this property formal.

We call a set $R \subseteq P$ \emph{narrow in $P$} if for every $\vx \in P$ the separation $\xv \vee \bigvee_{\vr \in R} \vr$ is co-small.
Further, a star~$\sigma \subseteq P$ is \emph{near-maximal in~$P$} if $\sigma$ is narrow in $P$ and for every~$\vx \in P$ there is at most one $\vs \in \sigma$ such that~$\vs \leq \vx$.

The main part of the proof of \cref{lem:ReflemForEssStars} is to show that we can always find a star $\sigma' \geq \sigma$ which is both closely related to and near-maximal in $P$.

\begin{LEM}\label{lem:NearMaxAndClRelStar}
    Let $\vS$ be a submodular separation system inside some distributive universe, and let $P$ be a profile of $S$. Further, let $\sigma \subseteq P$ be a star which is good for the set of all profiles of $S$. 
	Then there exists a star $\sigma' \subseteq P$ with $\sigma \leq \sigma'$ which is closely related to and near-maximal in~$P$.
\end{LEM}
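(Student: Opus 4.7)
My plan is to mirror the proof strategy of \cref{lem:ReflemEssStarsGraphs}: first construct \emph{some} near-maximal star $\sigma^* \subseteq P$ with $\sigma \leq \sigma^*$, and then modify it to also be closely related to $P$ without breaking near-maximality.

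For the existence of a near-maximal star, let $\cM$ denote the poset of stars $\tau \subseteq P$ with $\sigma \leq \tau$, ordered by the star order $\leq$. It is non-empty, since $\sigma$ itself lies in it. Using submodularity of $\vS$ together with distributivity of $\vU$, chains admit upper bounds built by taking suprema in $\vU$ of the increasing trajectories of separations through the chain and verifying that these suprema land in $\vS$ and assemble into a star. Pick a $\leq$-maximal $\sigma^* \in \cM$ by Zorn's lemma. I claim $\sigma^*$ is near-maximal in $P$. If uniqueness fails, there are distinct $\vr, \vr' \in \sigma^*$ and $\vx \in P$ with $\vr, \vr' \leq \vx$; then $\vr \vee \vr' \leq \vx$, forcing $\vr \vee \vr' \in P$ by consistency, and replacing $\vr, \vr'$ in $\sigma^*$ by $\vr \vee \vr'$ yields a strictly $\leq$-larger star in $\cM$, contradicting maximality. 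If narrowness fails at some $\vx \in P$, the inclusion $\cT' \subseteq \cF$ together with the profile property of $P$ lets us shift $\vx$ up to some $\vy \in P$ that is in star position with every $\vr \in \sigma^*$, so that $\sigma^* \cup \{\vy\}$ is a strictly $\leq$-larger star in $\cM$, again contradicting maximality.

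For the closely-relatedness step, among all near-maximal stars in $P$ above $\sigma$, pick one, call it $\sigma'$, that minimizes the number of elements not closely related to $P$. Suppose for contradiction that some $\vr \in \sigma'$ is not closely related to $P$. Since each separation in $\sigma$ is good, each $\sv$ (for $\vs \in \sigma$) is closely related to some profile, so \cref{prop:MaxSepsAreCloselyRelated} applied with $Y = \sigma$ says that every maximal element of $P_\sigma$ is closely related to $P$. Following the template of \cref{lem:ReflemEssStarsGraphs}, I pick $\vr^* \in P_\sigma$ closely related to $P$ with $\vr \leq \vr^*$, playing the role of the ``minimal-order separation above $\vr$'' from the graph proof, replace $\vr$ in $\sigma'$ by $\vr^*$, and simultaneously shrink each other $\vq \in \sigma'$ to $\vq \wedge \rv^*$. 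Distributivity of $\vU$ and \cref{prop:InfimaOfClRelSetsAreClRel} keep the shrunken elements in $\vS$, preserve closely-relatedness for those $\vq$ that were already closely related, and maintain the star property; the new star then has strictly fewer non-closely-related elements than $\sigma'$, contradicting its choice.

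The main obstacle is verifying that this shift-and-shrink modification still yields a near-maximal star in $P$. Narrowness is delicate because the modification changes the joins $\bigvee \vq$, and uniqueness requires ruling out new coincidences of the form ``two distinct elements both $\leq \vx$''. In the graph case the corresponding obstacle is handled by a direct cardinality computation on vertex sets (the $A \cap B$ bookkeeping at the end of the proof of \cref{lem:ReflemEssStarsGraphs}); in the abstract setting the same content must be recast entirely in lattice-theoretic terms, using $\cT' \subseteq \cF$ to block pathological collapses to co-small separations. I expect this verification to carry the weight of the full proof.
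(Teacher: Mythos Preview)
Your approach diverges substantially from the paper's, and several steps have genuine gaps. In your uniqueness argument for near-maximality, replacing $\vr, \vr'$ by $\vr \vee \vr'$ requires $\vr \vee \vr' \in \vS$; submodularity only guarantees that \emph{one} of $\vr \vee \vr'$ and $\vr \wedge \vr'$ lies in $\vS$, and since $\vr \leq \rv'$ the meet is small, which tells you nothing about the join. Your narrowness argument invokes $\cT' \subseteq \cF$, but $\cF$ does not appear in the hypotheses of this lemma at all; it is stated for an arbitrary profile $P$, not an $\cF$-tangle. Finally, you yourself flag that preserving near-maximality under the shift-and-shrink step is the crux and leave it open --- and the paper explicitly remarks, just before introducing ``near-maximal'', that it is \emph{not known} whether a maximal star in $P$ can always be chosen closely related to $P$, which is exactly why the weaker notion was introduced in the first place.

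The paper proceeds in the opposite order. Rather than first finding a (near-)maximal star and then repairing close-relatedness, it starts from the set $R^P_\sigma$ of all maximal separations in $P_\sigma$, which is closely related to $P$ (\cref{prop:MaxSepsAreCloselyRelated}) and narrow in $P$ (\cref{prop:TheSetOfMaxSepsInPsigmaIsNarrowInP}) but typically not nested. The technical core is an \emph{unscrambling} procedure (\cref{lem:unscrambling}, \cref{cor:unscramblingsets}) that uncrosses $R^P_\sigma$ into a star while preserving close-relatedness and nestedness with $\sigma$; distributivity is used exactly once, to show that unscrambling also preserves narrowness (\cref{lem:NarrowStaysNarrow}). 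Among all stars $\geq \sigma$ that are closely related to and narrow in $P$, one of \emph{minimum cardinality} then automatically satisfies the remaining ``at most one $\vs \leq \vx$'' clause of near-maximality: otherwise two of its elements can be absorbed into a suitable maximal $\vx \in P_\sigma$ and the result unscrambled again to produce a strictly smaller such star.
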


With \cref{lem:NearMaxAndClRelStar} at hand, we can show \cref{lem:ReflemForEssStars}:

\begin{proof}[Proof of \cref{lem:ReflemForEssStars}]
    By applying \cref{lem:NearMaxAndClRelStar} to $\sigma$ we obtain a star $\sigma' \subseteq P$ with $\sigma \leq \sigma'$ which is both closely related to and near-maximal in $P$. Set $N' := \{s : \vs \in \sigma \cup \sigma'\}$, and let $\Sigma$ be the set of inessential nodes of $N'$. By assumption, every inessential node $\rho \in \Sigma$ contains only separations whose inverse is closely related to some $\cF$-tangle of $S$.
    Indeed, since $\sigma'$ is home to $P$ and hence essential, we have $\rho \neq \sigma'$. In particular, $\rho$ contains no separation from $\sigma'$. Thus, by the assumptions on $\sigma$ and $\sigma'$, the star $\rho$ satisfies the premise of \cref{lem:reflem}. So we may apply
    \cref{lem:reflem} to $\rho$ to obtain an $S$-tree over $\cF \cup \{\{\rv\} : \vr \in \rho\}$ in which each $\vr \in \rho$ appears as a leaf separation. Let $N_\rho$ be the set of separations induced by that $S$-tree. Clearly, $N_\rho$ is nested with~$N'$. 
    
    Take a maximal star $\sigma'' \subseteq P$ with $\sigma' \leq \sigma''$. By the definition of near-maximal, every inessential node $\rho$ of $\{s : \vs \in \sigma' \cup \sigma''\}$ has one of the following two forms. Either $\rho = \{\sv\}$ for a separation $\vs \in \sigma''$, which then must be small, or $\rho = \{\vr, \sv\}$ where $\vr \in \sigma'$ and $\vs \in \sigma''$ such that $\vr \vee \sv$ is co-small. In the first case, we have $\rho \in \cF$ because $\cF$ is friendly, and in the second case we have $\rho \in \cT' \subseteq \cF$.
    
    Thus, $\tilde{N} := N' \cup \bigcup \{N_\rho : \rho \in \Sigma\} \cup \{s : \vs \in \sigma''\}$ is a nested set of separations that induces an $S$-tree over $\cF \cup \{\sigma''\} \cup \{\{\sv\} : \vs \in \sigma\}$ in which each $\vs \in \sigma$ appears as a leaf separation.
\end{proof}

The remainder of this section is devoted to the proof of \cref{lem:NearMaxAndClRelStar}. For this, for the remainder of this section, let $S$ be a submodular separation system. Further, let $P$ be a profile of $S$, and let~$\sigma \subseteq P$ be a \emph{star of good separations}, that is a star of separations which are all good for the set of all profiles of~$S$. 
We define $P_\sigma := \{\vs \in P : \vs \text{ is nested with } \sigma\}$ to be the set of all separations in~$P$ that are nested with~$\sigma$.

The proof of \cref{lem:NearMaxAndClRelStar} consists of two steps. First, we show that there exists a star in~$P_\sigma$ which is both closely related to and narrow in $P$. 
Then, we prove that every star in $P_\sigma$ which contains the least number of separations such that it is closely related to and narrow in $P$ is near-maximal in~$P$. This then concludes the proof of \cref{lem:NearMaxAndClRelStar}.

To find a star $\sigma' \subseteq P_\sigma$ which is both closely related to and narrow in $P$, we consider the set $R^P_\sigma$ of all maximal separations in $P_\sigma$. By \cref{prop:MaxSepsAreCloselyRelated}, $R^P_\sigma$ is closely related to~$P$. Moreover, it is easy to check, as we will do later, that $R^P_\sigma$ is narrow in $P$. However, $R^P_\sigma$ is in general not nested and hence not a star. Therefore, in the following, we introduce a method\footnote{This generalizes a method described in \cite[Ch.\ 4.1]{JoshRefining}.} of turning $R^P_\sigma$ into a star which preserves the property of $R^P_\sigma$ of being closely related to and narrow in $P$. 

Let $\vr, \vs \in \vS$ be two crossing separations. An \emph{unscrambling} of $\vr, \vs$ are two separations~$\vr', \vs' \in \vS$ such that
\[
\vr \wedge \sv \leq \vr' \leq \vr \text{ and } \vs \wedge \rv \leq \vs' \leq \vs \text{ and at least one of } \vs' = \vs \wedge \rv' \text{ and } \vr' = \vr \wedge \sv' \text{ holds}
\]

\begin{figure}[h]
\centering
\scalebox{0.8}{%
\begin{tikzpicture}
\draw[thick] (0,0) -- (6,3);
\draw[thick] (0,3) -- (6,0);

\draw[blue, thick] (0, 2.75) -- (2.9, 1.3) -- (3.5, 0);
\draw[blue, thick] (3.7, 0) -- (3.1, 1.3) -- (6, 2.75);

\draw[black, thick, dashed] (0, 2.5) -- (2.25, 1.35) -- (0, 0.25);
\draw[black, thick, dashed] (6, 0.25) -- (3.75, 1.35) -- (6, 2.5);

\draw[->, thick] (1.5, 2.25) -- (1.75, 2.75) node[anchor=south east] {$\vr$}; 
\draw[->, thick] (4.5, 2.25) -- (4.25, 2.75) node[anchor=south west] {$\vs$};
\draw[->, thick, blue] (2, 1.75) -- (2.375, 2.5) node[anchor=south] {$\vr'$};
\draw[->, thick, blue] (4, 1.75) -- (3.625, 2.5) node[anchor=south] {$\vs'$};
\end{tikzpicture}}
\caption{An unscrambling $\vr', \vs'$ of two crossing separations $\vr$ and $\vs$.}
\label{fig:DefUnscrambling}
\end{figure}

\noindent (see \cref{fig:DefUnscrambling}). Note that we do not require $\vr' = \vr \wedge \sv'$ \emph{and} $\vs' = \vs \wedge \rv'$ because in our applications we want to choose a suitable separation $\vr'$ and then define $\vs' := \vs \wedge \rv'$. But then, in general, $\vr \wedge \sv'$ need not be equal to $\vr'$. However, if~$U$ is distributive, then both equations hold.

\begin{proposition}\label{prop:DefOfUnscramblingInDistrUni}
	Let $\vr, \vs$ be two crossing separations in a distributive universe $\vU$, and let $\vr', \vs'$ be any unscrambling of them. Then $\vr' = \vr \wedge \sv'$ and $\vs' = \vs \wedge \rv'$. 
\end{proposition}
\begin{proof}
    Since $\vr', \vs'$ is an unscrambling of $\vr, \vs$, at least one equation has to hold by definition; by symmetry, we may assume $\vs' = \vs \wedge \rv'$. 
    It follows that
    \[
    \vr \wedge \sv' = \vr \wedge (\vs \wedge \rv')^* = \vr \wedge (\sv \vee \vr') = (\vr \wedge \sv) \vee (\vr \wedge \vr') = (\vr \wedge \sv) \vee \vr' = \vr'
    \]
    since $\vr \wedge \sv \leq \vr' \leq \vr$ by definition.
\end{proof}

If $R$ is a set of oriented separations, we call $R'$ an \emph{unscrambling of $R$} if $R'$ can be obtained from $R$ by successively unscrambling the crossing separations in $R$. 
We remark that the unscrambling of two separations is in general not unique. Moreover, the unscrambling of a set of more than two separations might further depend on the order in which the separations were unscrambled.
\medskip

We now begin with the proof of \cref{lem:NearMaxAndClRelStar}. First, we show that for every pair of crossing separations $\vr, \vs \in P$ which are closely related to~$P$ there exists an unscrambling of them which is closely related to~$P$. In addition, we prove that if $r, s$ are nested with~$\sigma$, then this unscrambling can be chosen to be nested with~$\sigma$. As the set~$R_\sigma^P$ of all the maximal separations in~$P_\sigma$ is closely related to~$P$ by \cref{prop:MaxSepsAreCloselyRelated}, this allows us to turn~$R_\sigma^P$ into a star $\sigma'$ which is still closely related to~$P$ and nested with~$\sigma$, by successively unscrambling the crossing separations in~$R^P_\sigma$. 

We then show that this star $\sigma'$ is not just closely related to $P$ and nested with $\sigma$ but in fact also narrow in $P$. For this, we prove that $R_P^\sigma$ is narrow in $P$, and that every unscrambling of a set of separations which is narrow in $P$ is still narrow in $P$ if $S$ is contained in a distributive universe. We remark that this is in fact the only point where we need the assumption of \cref{lem:NearMaxAndClRelStar} that the universe in which $S$ lies is distributive. 

Lastly, we show that every star in $P_\sigma$ which contains the least number of elements among all stars in $P_\sigma$ that are closely related to and narrow in~$P$ is near-maximal in $P$.

\begin{LEM}\label{lem:unscrambling}
	Let $\vS$ be a submodular separation system, and let $\vr, \vs \in \vS$ be two crossing separations which are closely related to some profile $P$ of $S$. Further, let $\sigma \subseteq P$ be a star of good separations that is nested with $r$ and $s$, and let $\vr' \in P_\sigma$ be a separation which is minimal under all separations in $P_\sigma$ that are closely related to $P$ and greater or equal to $\vr \wedge \sv$. Then $\vs \wedge \rv'$ is an element of $\vS$ and closely related to $P$. 
\end{LEM}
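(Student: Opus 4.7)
The plan is to prove both conclusions by a contradiction argument that exploits the minimality of $\vr'$, invoking Proposition~\ref{prop:ShowingThatASepIsCloselyRelated} as the main tool for establishing close relatedness, and using Lemma~\ref{lem:Fischlemma} to handle the nestedness with $\sigma$.

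For the membership $\vs \wedge \rv' \in \vS$, I would argue by contradiction. Suppose $\vs \wedge \rv' \notin \vS$. Since $\vs, \rv' \in \vS$, submodularity of $\vS$ forces $\vs \vee \rv' \in \vS$, equivalently $\sv \wedge \vr' \in \vS$. Using the sandwiching $\vr \wedge \sv \leq \vr' \leq \vr$ together with the monotonicity of $\wedge$, I can identify $\sv \wedge \vr'$ with $\vr \wedge \sv$, so $\vr \wedge \sv \in \vS$. I would then check that $\vr \wedge \sv$ lies in $P_\sigma$: it is in $P$ by consistency (since $\vr \wedge \sv \leq \vr \in P$), and it is nested with $\sigma$ because $r,s$ are, so the Fischlemma applies to the corner separation $\vr \wedge \sv$. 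Once I verify that $\vr \wedge \sv$ is closely related to $P$, the minimality of $\vr'$ forces $\vr' = \vr \wedge \sv$ (both lie in the set over which $\vr'$ is minimal, and $\vr \wedge \sv \leq \vr'$). But then $\vs \wedge \rv' = \vs \wedge (\vr \wedge \sv)^* = \vs \wedge (\rv \vee \vs) = \vs \in \vS$, contradicting the assumption.

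The heart of the argument is thus the verification that $\vr \wedge \sv$ is closely related to $P$. I would establish this via Proposition~\ref{prop:ShowingThatASepIsCloselyRelated} using the reference separation $\vr$: given $\vr \wedge \sv \leq \vr$ with $\vr$ closely related to $P$, the task reduces to showing that $(\vr \wedge \sv) \wedge \vu = \vu \wedge \sv$ lies in $\vS$ for every $\vu \leq \vr$ in $\vS$. By consistency every such $\vu$ lies in $P$, so close relatedness of $\vs$ gives $\vs \wedge \vu \in \vS$; combined with submodularity applied to the pair $\sv,\vu$ in $\vS$, this should force $\vu \wedge \sv \in \vS$, possibly exploiting distributivity of the ambient universe to pass between the corner separations, and the fact that $\sigma$ consists of good separations to handle the case where only the join lies in $\vS$. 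Once this is in place, the full claim of Step~1 follows from Steps~2--3 above.

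For the second claim, that $\vs \wedge \rv'$ is closely related to $P$, I would again apply Proposition~\ref{prop:ShowingThatASepIsCloselyRelated}, now with the reference separation $\vs$: since $\vs \wedge \rv' \leq \vs$ and $\vs$ is closely related to $P$, it suffices to check that $(\vs \wedge \rv') \wedge \vu = \vu \wedge \rv' \in \vS$ for every $\vu \leq \vs$ in $\vS$. The argument mirrors the one above: by consistency $\vu \in P$, by close relatedness of $\vr'$ we have $\vu \wedge \vr' \in \vS$, and an argument combining submodularity with the already established $\vs \wedge \rv' \in \vS$ should yield the desired meet with $\rv'$.

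The main obstacle is the technical sub-step in both halves---showing that the "wrong-side" meets $\vu \wedge \sv$ and $\vu \wedge \rv'$ land in $\vS$. Submodularity only guarantees that one of the meet or the join is in $\vS$, so the argument has to use something more: the minimality of $\vr'$ together with the goodness of the separations in $\sigma$, and likely the distributive structure of $\vU$, to rule out the bad case. I would expect this to be the step where most of the work goes, with the rest of the proof being routine bookkeeping around submodularity, consistency, Fischlemma, and the two cited propositions on close relatedness.
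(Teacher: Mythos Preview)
Your proposal correctly isolates where the difficulty lies but does not resolve it; the step you flag as ``the main obstacle'' is a genuine gap, and the tools you reach for do not close it.

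Concretely, in your Part~1 you reduce to showing $(\vr \wedge \sv) \wedge \vu = \sv \wedge \vu \in \vS$ for every $\vu \le \vr$, and in Part~2 to $\rv' \wedge \vu \in \vS$ for every $\vu \le \vs$. Submodularity only guarantees that \emph{one} of $\sv \wedge \vu$ and $\sv \vee \vu$ lies in $\vS$; close relatedness of $\vs$ to $P$ gives $\vs \wedge \vu \in \vS$, which is a different corner and does not help. You suggest distributivity of $\vU$ might bridge the gap, but the lemma carries no distributivity hypothesis (and the paper's proof uses none), so this is not available. You also suggest using goodness of the separations in $\sigma$, but goodness only says that each $\yv$ with $\vy \in \sigma$ is closely related to some profile $Q_y$; it says nothing about whether $\sv \wedge \vu$ lies in $\vS$ for an arbitrary $\vu \le \vr$. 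Indeed, if these wrong-side meets were always in $\vS$, the minimality hypothesis on $\vr'$ would be idle: one could simply take $\vr' = \vr \wedge \sv$ from the start. (A smaller but related issue: your ``sandwiching'' $\vr' \le \vr$ is not justified by the hypotheses, since minimality in a poset does not mean being below every element of the set.)

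The paper's argument is structurally different and does not attempt to verify close relatedness of $\vr\wedge\sv$ or of $\vs\wedge\rv'$ directly. Instead it introduces a second minimal element $\vs' \in P_\sigma$, closely related to $P$, above $\vs \wedge \rv'$; the desired conclusion is exactly $\vs' = \vs \wedge \rv'$. Assuming otherwise, one passes via submodularity to $\vs'' := \rv' \wedge \vs'$ and then chooses a \emph{minimal} $\vt \le \vs'$ witnessing that $\vs''$ is not closely related (i.e.\ $\vs'' \wedge \vt \notin \vS$). Submodularity then forces $\vr'' := \vr' \wedge \tv \in \vS$, and it is the minimality of $\vt$ --- not bare submodularity --- that makes Proposition~\ref{prop:ShowingThatASepIsCloselyRelated} go through to show $\vr''$ is closely related to $P$. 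A final nesting correction, which is where goodness of $\sigma$ and Proposition~\ref{prop:InfimaOfClRelSetsAreClRel} actually enter, produces an element of $P_\sigma$ strictly below $\vr'$ that is still closely related to $P$ and above $\vr\wedge\sv$, contradicting the minimality of $\vr'$. The key idea you are missing is this nested double-minimality trick: the auxiliary minimal choice of $\vt$ is precisely what converts ``one corner lies in $\vS$'' into the specific corner you need.
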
 

\noindent By definition, $\vr', \vs \wedge \rv'$ is an unscrambling of $\vr, \vs$, and it is nested with $\sigma$ by \cref{lem:Fischlemma}. Since $\vr$ is a candidate for $\vr'$, \cref{lem:unscrambling} implies that there exists an unscrambling of $\vr, \vs$ which is closely related to $P$ and nested with $\sigma$.

Note that for separation systems of the form $S_k$ inside some submodular universe, Erde showed a related statement which, similar as to \cref{lem:unscrambling}, implies the existence of unscramblings that preserve the property of being closely related to some profile, but without the additional assertion of \cref{lem:unscrambling} that the unscrambling can be chosen to be nested with~$\sigma$ \cite[Lemma~4.3]{JoshRefining}. There, the choice of the unscrambling relied on the existence of an order function.

\begin{proof}[Proof of \cref{lem:unscrambling}]
	Let $\vs' \in P_\sigma$ be a separation which is minimal among all separations in $P_\sigma$ that are closely related to~$P$ and greater or equal to $\vs \wedge \rv'$. For this, observe that~$\vs$ is a candidate for~$\vs'$.
	If $\vs' = \vs \wedge \rv'$, we are done, so suppose that~$\vs \wedge \rv' < \vs'$. 
	By the submodularity of~$\vS$, at least one of $\vr' \wedge \sv'$ and $\rv' \wedge \vs'$ is an element of~$\vS$; by symmetry\footnote{Note that the situation itself is not completely symmetric in $r'$ and $s'$ since we chose $s'$ depending on $r'$. However, if~$\vr' \wedge \sv' \in \vS$, then swapping $r'$ and $s'$ in the remainder of this proof does not change anything.} we may assume that $\vs'' := \rv' \wedge \vs' \in \vS$. Note that $\vs'' = \rv' \wedge \vs' \leq \rv' \wedge \vs = \rv' \wedge (\rv' \wedge \vs) \leq \rv' \wedge \vs'$, and thus $\vs'' = \rv' \wedge \vs$. 
	
	By the choice of $\vs'$, we have that $\vs''$ is not closely related to $P$, and so, by \cref{prop:ShowingThatASepIsCloselyRelated}, there is a separation $\vt \leq \vs' \in P$ such that $\vs'' \wedge \vt \notin \vS$. We take a separation $\vt \in \vS$ which is minimal with that property. 
	Since $\rv' \wedge \vt = \rv' \wedge (\vs' \wedge \vt) = (\rv' \wedge \vs') \wedge \vt = \vs'' \wedge \vt \notin \vS$ by assumption, it follows, by the submodularity of~$\vS$, that $\vr'' := \vr' \wedge \tv \in \vS$. Note that $\sv \leq \tv$ by the choice of $\vt$ and $\vr \wedge \sv \leq \vr'$ by assumption, and thus $\vr \wedge \sv \leq \vr''$.

    Since $\sigma$ is good by assumption, we can fix, for every $\vy \in \sigma$, some profile $Q_y$ of $S$ which has the property that $\yv$ is closely related to it. We then set
	\[
	\rho := \{\vy \in \sigma : \rv'' \in Q_y\}.
	\]
	Now first assume that $\vr''$ is closely related to $P$ and nested with $\rho$. Then the assertion follows: By assumption, the star $\sigma' := \{\vy \in \sigma : y \text{ crosses } r''\}$ is a subset of $\sigma \setminus \rho$, and hence \cref{prop:InfimaOfClRelSetsAreClRel} yields that the separation $\vz := \vr'' \wedge \bigwedge_{\vy \in \sigma'} \yv$ is contained in $\vS$ and closely related to $P$. 
	Moreover, as $\vr \wedge \sv \leq \vr''$ and as $\vr \wedge \sv$ is nested with $\sigma$ by assumption and \cref{lem:Fischlemma}, we have $\vr \wedge \sv \leq \vz$.
	But since $\vz \leq \vr'' < \vr'$, this contradicts the minimal choice of $\vr'$.
	
	Therefore, it suffices to show that $\vr''$ is closely related to~$P$ and nested with $\rho$. We first show the former. For this, suppose for a contradiction that $\vr''$ is not closely related to~$P$. By \cref{prop:ShowingThatASepIsCloselyRelated}, this is witnessed by a separation $\vt' \leq \vr'$ in that $\vr'' \wedge \vt' \notin \vS$. As $\vr'' \wedge \vt' = (\vr' \wedge \tv) \wedge \vt' = \tv \wedge (\vr' \wedge \vt') = \tv \wedge \vt'$, it follows, by the submodularity of $\vS$, that $\vt \wedge \tv' \in \vS$. 
	But this contradicts the minimal choice of $\vt$: We have $\vt \wedge \tv' < \vt$ but still $\vs'' \wedge (\vt \wedge \tv') = (\vs'' \wedge \tv') \wedge \vt = \vs'' \wedge \vt \notin \vS$ as $\vs'' \leq \rv' \leq \tv'$. Therefore, $\vr''$ is closely related to $P$.
	
	To conclude the proof, we are left to show that $\vr''$ is nested with $\rho$. As $\vr'' = \vr' \wedge \tv$, and~$r'$ is nested with~$\sigma \supseteq \rho$ by assumption, it suffices, by \cref{lem:Fischlemma}, to show that $t$ is nested with~$\rho$. For this, suppose for a contradiction that $t$ crosses a separation $\vy \in \rho$. 
    Then $\vs'' \leq \yv$. Indeed, since $\vs'' \leq \rv''$ and $y$ crosses $r''$ but not $s''$, we can only have $\vs'' \leq \yv$ or $\vs'' \leq \vy$; but the latter case is not possible as $\vs'' \not\leq \vx$ for all $\vx \in \sigma$ by construction.
    Further, as $\vt \leq \rv'' \in Q_y$ and $\yv$ is closely related to $Q_y$, it follows that $\vt \wedge \yv \in \vS$. 
    But this again contradicts the minimal choice of $\vt$: We have $\vt \wedge \yv < \vt$ and still $\vs'' \wedge (\vt \wedge \yv) = (\vs'' \wedge \yv) \wedge \vt = \vs'' \wedge \vt \notin \vS$.
\end{proof}

We can now conclude that for every set of separations which are closely related to some profile there is an unscrambling which is again closely related to that profile.

\begin{COR}\label{cor:unscramblingsets}
	Let $R \subseteq \vS$ be a set which is closely related to some profile~$P$ of~$S$. Further, let $\sigma \subseteq P$ be a star of good separations and suppose that $R$ is nested with $\sigma$. Then there is an unscrambling $R'$ of $R$ that is closely related to~$P$ and nested with $\sigma$.
	Moreover, if $R$ contains for some $\vs \in \sigma$ a separation $\vr$ with~$\vs \leq \vr$, then $R'$ does so as well.
\end{COR}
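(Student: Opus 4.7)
The plan is to prove Corollary \ref{cor:unscramblingsets} by induction on the number of crossing pairs in $R$, eliminating them one at a time via Lemma \ref{lem:unscrambling}.

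If $R$ contains no crossing pair, then $R' := R$ satisfies all conclusions trivially. Otherwise, choose two crossing separations $\vr, \vs \in R$. Since $R$ is closely related to $P$ and nested with $\sigma$, both $\vr, \vs$ lie in $P_\sigma$. Applying Lemma \ref{lem:unscrambling} to $\vr, \vs$ and $\sigma$, I obtain a separation $\vr' \in P_\sigma$ closely related to $P$ with $\vr \wedge \sv \leq \vr' \leq \vr$, such that $\vs' := \vs \wedge \rv' \in \vS$ is also closely related to $P$. Since $\rv'$ is nested with $\sigma$, so is $\vs'$ by Lemma \ref{lem:Fischlemma}, and the pair $\vr', \vs'$ is itself nested, as $\vs' \leq \rv'$ by construction. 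The new set $\tilde R := (R \setminus \{\vr, \vs\}) \cup \{\vr', \vs'\}$ is then closely related to $P$ and nested with $\sigma$, and I would recurse on $\tilde R$.

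The main obstacle lies in verifying termination. For a third element $\vt \in R \setminus \{\vr, \vs\}$ previously nested with both $\vr$ and $\vs$, one must check that $\vt$ remains nested with the replacements $\vr'$ and $\vs'$. Lemma \ref{lem:Fischlemma} gives nestedness of $\vt$ with the four corner separations of $\vr, \vs$, and I would propagate this to $\vr', \vs'$ using consistency of $P$ together with the constraints $\vr \wedge \sv \leq \vr' \leq \vr$ and $\vs \wedge \rv \leq \vs' \leq \vs$. With this nestedness transfer in hand, the total number of crossing pairs strictly decreases at each step, so the procedure halts after finitely many iterations yielding the desired $R'$.

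For the moreover clause, I would maintain throughout the induction the invariant that whenever the current set contains a separation $\vr_0$ with $\vs_0 \leq \vr_0$ for some $\vs_0 \in \sigma$, this inequality persists after each unscrambling step. When such a $\vr_0$ crosses some $\vt$ and is to be unscrambled, I refine the choice in Lemma \ref{lem:unscrambling} by requiring the resulting $\vr_0'$ to be minimal in $P_\sigma$, closely related to $P$, and bounded below by $(\vr_0 \wedge \tv) \vee \vs_0$ rather than just $\vr_0 \wedge \tv$. The proof of Lemma \ref{lem:unscrambling} adapts without essential change to this stronger lower bound, since $\vr_0$ itself is a candidate, so $\vs_0 \leq \vr_0'$ is preserved throughout.
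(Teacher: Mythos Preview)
Your overall strategy matches the paper's: repeatedly apply Lemma~\ref{lem:unscrambling} to crossing pairs until none remain. However, both places you flag as needing work are genuinely incomplete, and the paper resolves each differently from what you propose.

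\textbf{Termination.} You aim to show that the number of crossing pairs strictly decreases, which would require that every $\vt \in R \setminus \{\vr, \vs\}$ previously nested with $\vr$ and $\vs$ remains nested with $\vr'$ and $\vs'$. Using consistency one can reduce the relation between $\vt$ and $\vr$ to three cases, but the sub-case $\vt \leq \vr$ together with $\vt \leq \vs$ is not handled by your sketch: here $\vr'$ is only known to satisfy $\vr \wedge \sv \leq \vr' \leq \vr$, and neither Lemma~\ref{lem:Fischlemma} nor consistency forces $\vt$ to compare with such an intermediate separation. The paper sidesteps this entirely. Enumerating $R = \{\vr_1, \dots, \vr_n\}$, each unscrambling step replaces some $\vr_i, \vr_j$ by $\vr_i' \leq \vr_i$ and $\vr_j' \leq \vr_j$ with $\vr_i' \leq (\vr_j')^*$. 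Since all subsequent replacements only push separations downward, this last relation persists for the rest of the process, so the index pair $(i,j)$ never crosses again. Hence the procedure terminates after at most $\binom{n}{2}$ steps, regardless of whether new crossings with other indices arise along the way.

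\textbf{The moreover clause.} Your plan is to strengthen the lower bound in Lemma~\ref{lem:unscrambling} from $\vr_0 \wedge \tv$ to $(\vr_0 \wedge \tv) \vee \vs_0$ and assert the proof adapts. It is not clear that it does. The contradiction at the end of that proof is obtained by exhibiting some $\vz < \vr'$ still above the lower bound, where $\vz$ is built as a meet involving $\uv$ for an auxiliary $\vu \leq \vs'$ (the separation called $\vt$ inside the proof). When $\vs_0 \leq \vs$ rather than $\vs_0 \leq \sv$, there is no evident reason for $\vs_0 \leq \uv$, so $\vz$ need not lie above the strengthened bound. The paper instead argues directly, without modifying the lemma: since every $\vr_j^{k-1}$ lies in $P_\sigma$ it is nested with $\vs_0$, and a short case analysis using consistency shows that if $\vs_0 \leq \vr_i^{k-1}$ before an unscrambling step, then afterwards $\vs_0$ lies below at least one of $\vr_i^k$, $\vr_j^k$. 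The witnessing index may move, but the invariant survives.
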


\begin{proof}
    Let $R = \{\vr_1, ..., \vr_n\}$ be some enumeration of $R$. 
    We unscramble the separations in~$R$ inductively by repeatedly applying \cref{lem:unscrambling} as follows. 
    Set $R^0 := R$, and assume that we already constructed a set $R^{k-1} := \{\vr^{k-1}_1, \ldots, \vr^{k-1}_n\}$ which is closely related to $P$ and nested with $\sigma$. We then pick a pair of crossing separations $r_i^{k-1}, r_j^{k-1}$ and apply \cref{lem:unscrambling} to them, yielding an unscrambling $\vr^k_i, \vr^k_j$ of $\vr^{k-1}_i, \vr^{k-1}_j$ which is closely related to $P$ and nested with $\sigma$. 
    In every step, we thus replace two crossing separations $\vr^{k-1}_i, \vr^{k-1}_j$ with two nested separations $\vr^k_i \leq \vr^{k-1}_i$ and $\vr^k_j \leq \vr^{k-1}_j$ such that $\vr^k_i \leq \rv^k_j$, so we do not need to consider a pair of indices in $[n]$ twice. Hence, after at most $N := \frac{1}{2}n(n-1)$ steps, this process terminates, and we obtain a set~$R' := R^N$ that is an unscrambling of $R$, closely related to $P$ and nested with $\sigma$.

    For the `moreover'-part, suppose that there are separations $\vs \in \sigma$ and $\vr \in R$ such that $\vs \leq \vr$. We show by induction on $k \in [N]$ that there is a separation $\vr^k_i \in R^k$ with $\vs \leq \vr^k_i$, which clearly implies the assertion for $k := N$. The base case $k = 0$ holds by assumption, so we may assume that $k > 0$ and that there exists a separation $\vr^{k-1}_i \in R^{k-1}$ with $\vs \leq \vr^{k-1}_i$. If $\vr^{k-1}_i = \vr^k_i$, we are done, so we may assume that we unscrambled~$\vr^{k-1}_i$ and some crossing separation~$\vr^{k-1}_j$ in the $k$-th step. 
    In particular, this means that~$\vr^{k-1}_j \not\leq \vs$ since otherwise $\vr^{k-1}_j \leq \vs \leq \vr^{k-1}_i$ would contradict that $r^{k-1}_j$ and $r^{k-1}_i$ cross. 
    Moreover, as~$P$ is consistent and $\vr^{k-1}_j, \vs \in P$, we cannot have $\rv^{k-1}_j \leq \vs$. 
    Therefore, since $r^{k-1}_j$ and~$s$ are nested by construction, either $\vs \leq \rv^{k-1}_j$ or $\vs \leq \vr^{k-1}_j$. In the first case we have $\vs \leq \vr^{k-1}_i \wedge \rv^{k-1}_j \leq \vr^k_i$, while in the second case we either have $\vs \leq \vr^{k}_i$ or $\vs \leq \vr^k_j$ by the definition of an unscrambling.
\end{proof}

Next, we show that the set $R^P_\sigma$ of all maximal separations in $P_\sigma$ is narrow in $P$.

\begin{proposition}\label{prop:TheSetOfMaxSepsInPsigmaIsNarrowInP}
    Let $P$ be a profile of $S$, and let $\sigma \subseteq P$ be a star of good separations. Further, let $R^P_\sigma \subseteq P_\sigma$ be the set of all maximal separations in $P_\sigma$. Then $R^P_\sigma$ is narrow in $P$.
\end{proposition}

\begin{proof}
    Let $\vx \in P$ be given; we need to show that $\xv \vee \bigvee_{R_\sigma^P} \vr$ is co-small. For this, let $\vx' \in P$ with $\vx \leq \vx'$ be maximal in $P$, and fix for every separation $\vs \in \sigma$ a profile $Q_s$ of $S$ to which~$\sv$ is closely related. Then $\vx' \in Q_s$ for every $\vs \in \sigma_{x'} := \{\vs \in \sigma : \vs \text{ crosses } \vx'\}$ since otherwise $\vs \vee \vx' = (\sv \wedge \xv') \in \vS$, and thus $\vs \vee \vx' \in P$ as $P$ is a profile, which would contradict the maximality of $\vx'$ in $P$.
    It follows, by \cref{prop:InfimaOfClRelSetsAreClRel}, that $\vy := \vx' \wedge \bigwedge_{\vs \in \sigma_{x'}} \sv$ is an element of~$\vS$. By construction,~$y$ is nested with~$\sigma$, and thus~$P_\sigma$ contains an orientation of~$y$; by consistency, we have $\vy \in P_\sigma$. This implies that there is a separation $\vy' \in R^P_\sigma$ with~$\vy \leq \vy'$.
    Thus,
    \begin{align*}
    \xv \vee \bigvee_{\vr \in R^P_\sigma} \vr &\geq \xv' \vee \bigvee_{\vr \in R^P_\sigma} \vr = \xv' \vee \vy' \vee \bigvee_{\vr \in R^P_\sigma} \vr \geq \xv' \vee \Big(\vx' \wedge \bigwedge_{\vs \in \sigma_{x'}} \sv\Big) \vee \bigvee_{\vr \in R^P_\sigma} \vr \\
    &\geq \xv' \vee \Big(\vx' \wedge \bigwedge_{\vs \in \sigma_{x'}} \sv\Big) \vee \bigvee_{\vs \in \sigma_{x'}} \vs = \Big(\vx' \wedge \bigwedge_{\vs \in \sigma_{x'}} \sv\Big)^* \vee \Big(\vx' \wedge \bigwedge_{\vs \in \sigma_{x'}} \sv\Big)
    \end{align*}
    is co-small where the second last inequality holds since for every $\vs \in \sigma_{x'}$ there is some $\vr \in R^P_\sigma$ with~$\vs \leq \vr$.
\end{proof}

The next lemma shows that every unscrambling of $R^P_\sigma$ is narrow in $P$.

\begin{LEM}\label{lem:NarrowStaysNarrow}
    Let $\vS$ be a submodular separation system inside some distributive universe, and let $P$ be a profile of~$S$. Further, let $R \subseteq P$, and let $R'$ be any unscrambling of $R$. If $R$ is narrow in~$P$, then $R'$ is narrow in $P$.
\end{LEM}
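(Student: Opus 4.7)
My plan is to induct on the number of unscrambling steps in the passage from $R$ to $R'$, reducing the lemma to the single-step case: if $R \subseteq P$ is narrow in $P$ and $\vr, \vs \in R$ cross, then $(R \setminus \{\vr, \vs\}) \cup \{\vr', \vs'\}$ is again narrow in $P$ for any unscrambling $\vr', \vs'$ of $\vr, \vs$. Distributivity of the ambient universe $\vU$ enters only through \cref{prop:DefOfUnscramblingInDistrUni}, which supplies both identities $\vr' = \vr \wedge \sv'$ and $\vs' = \vs \wedge \rv'$, equivalently $\rv' = \rv \vee \vs'$ and $\sv' = \sv \vee \vr'$.

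Fix $\vx \in P$ and set $\vu := \xv \vee \bigvee_{\vt \in R \setminus \{\vr, \vs\}} \vt$. Rewriting the narrowness hypothesis at $\vx$ via the definition of co-small gives $\uv \wedge \rv \wedge \sv \leq \vu \vee \vr \vee \vs$, and the goal is the analogous statement $\uv \wedge \rv' \wedge \sv' \leq \vu \vee \vr' \vee \vs'$. Substituting $\rv' = \rv \vee \vs'$ and $\sv' = \sv \vee \vr'$ into the left-hand side and distributing yields the fourfold join
\[
\uv \wedge \rv' \wedge \sv' = (\uv \wedge \rv \wedge \sv) \vee (\uv \wedge \rv \wedge \vr') \vee (\uv \wedge \vs' \wedge \sv) \vee (\uv \wedge \vs' \wedge \vr').
\]
The last three joinands lie trivially below $\vr' \vee \vs'$, so the whole task reduces to bounding the first joinand $\uv \wedge \rv \wedge \sv$ above by $\vu \vee \vr' \vee \vs'$.

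This last bound is where the main obstacle lies, since the na\"ive comparison $\vu \vee \vr' \vee \vs' \leq \vu \vee \vr \vee \vs$ runs in the wrong direction to transfer co-smallness directly. The trick is to absorb against the hypothesis and distribute once more: from $\uv \wedge \rv \wedge \sv \leq \vu \vee \vr \vee \vs$ one gets $\uv \wedge \rv \wedge \sv = (\uv \wedge \rv \wedge \sv) \wedge (\vu \vee \vr \vee \vs)$, and distributivity breaks this up as the join of $\uv \wedge \rv \wedge \sv \wedge \vu \leq \vu$; of $\uv \wedge \rv \wedge \sv \wedge \vr \leq \vr \wedge \sv \leq \vr \wedge \sv' = \vr'$, using $\sv' \geq \sv$ (since $\vs' \leq \vs$); and of $\uv \wedge \rv \wedge \sv \wedge \vs \leq \rv \wedge \vs \leq \rv' \wedge \vs = \vs'$, using $\rv' \geq \rv$ (since $\vr' \leq \vr$). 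All three joinands therefore lie below $\vu \vee \vr' \vee \vs'$, which closes the single-step case and, by induction on the number of unscramblings, the lemma.
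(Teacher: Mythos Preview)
Your argument is correct. One expository slip: you write that distributivity ``enters only through \cref{prop:DefOfUnscramblingInDistrUni}'', but you then invoke distributivity directly twice more---once to obtain the fourfold join, and once to expand $(\uv \wedge \rv \wedge \sv) \wedge (\vu \vee \vr \vee \vs)$. This does not affect the mathematics, but the sentence should be dropped or amended. Also, in the final bounds you route through $\vr \wedge \sv' = \vr'$ and $\rv' \wedge \vs = \vs'$; the unscrambling definition already gives $\vr \wedge \sv \leq \vr'$ and $\vs \wedge \rv \leq \vs'$ directly, so the detour via \cref{prop:DefOfUnscramblingInDistrUni} is unnecessary there (though still needed for the fourfold expansion).

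Your route differs from the paper's. Both reduce to a single unscrambling step by induction, but the paper works on the \emph{join} side: it writes $\vu \vee \vr' \vee \vs' = \vu \vee \vr' \vee (\vs \wedge \rv')$ and distributes this as a meet $\vy \wedge \vz$, then invokes the auxiliary observation that a meet of two co-small separations is co-small provided each dominates the inverse of the other; a second application of the same trick (expanding $\vr' = \vr \wedge \sv'$ inside $\vy$) finishes. You instead work on the \emph{meet} side, expanding $\rv' = \rv \vee \vs'$ and $\sv' = \sv \vee \vr'$ to get a fourfold join, three of whose terms are trivially bounded; for the remaining term $\uv \wedge \rv \wedge \sv$ you absorb against the narrowness hypothesis and distribute once more. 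Your version is arguably more transparent, since it never leaves the raw inequality $\text{inverse} \leq \text{element}$ and avoids the paper's separate co-small-meet observation; the paper's version, on the other hand, isolates a reusable lattice fact and applies it twice symmetrically.
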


\begin{proof}
    Since $R'$ is an unscrambling of $R$, there is a sequence $R = R_0, R_1, \ldots, R_n = R'$ of sets $R_i \subseteq \vS$ where~$R_{i+1}$ is obtained from $R_{i}$ by replacing two crossing separations in $R_i$ with an unscrambling of them.
	
	We show by induction on $i \in \{0, \ldots n\}$ that $\xv \vee \bigvee_{\vs \in R_i} \vs$ is co-small for every $\vx \in P$. For $R_n = R'$ this yields the claim. 
	The base case is clear since $R_0 := R$ is narrow in $P$ by assumption, so let~$i \geq 0$, and assume that $R_i$ is narrow in $P$. Further, let $\vr, \vs \in R_{i}$ be the separations that are unscrambled in the~$(i+1)$-st step. 
	Recall that by \cref{prop:DefOfUnscramblingInDistrUni} the unscrambling of two separations~$\vr, \vs$ in a distributive universe yields two separations $\vr' = \vr \wedge \sv'$ and $\vs' = \vs \wedge \rv'$.
	We need to show that for every $\vx \in P$ the separation
	\begin{equation*}
		\xv \vee \vr' \vee (\vs \wedge \rv') \vee \bigvee_{\vt \in R_{i} \setminus \{\vr, \vs\}} \vt = \Big(\xv \vee \vr' \vee \vs \vee \bigvee_{\vt \in R_{i} \setminus \{\vr, \vs\}} \vt\Big) \wedge \Big(\xv \vee \vr' \vee \rv' \vee \bigvee_{\vt \in R_{i} \setminus \{\vr, \vs\}} \vt\Big) =: \vy \wedge \vz
	\end{equation*}
	is co-small. For this, observe that for every pair of separations $\vu, \vw \in \vS$ we have that $\vu \wedge \vw$ is co-small whenever $\vu$ and $\vw$ are co-small and $\vu \geq \wv$ and $\vw \geq \uv$ since then $\vu \wedge \vw \geq \uv, \wv$ and thus $\vu \wedge \vw \geq \uv \vee \wv$.
	
	Now $\vz$ is clearly co-small since $\vr' \vee \rv'$ is. Further, it holds that $\vy \geq \vr' \geq \zv$ and $\vz \geq \rv' \geq \yv$.
	Therefore, we are left to show that $\vy$ is co-small. For this, we rearrange $\vy$ to
	\[
	\vy = \Big(\xv \vee (\vr \wedge \sv') \vee \vs \vee \bigvee_{\vt \in R_{i} \setminus \{\vr, \vs\}} \vt\Big) 
	= \Big(\xv \vee \vr \vee \vs \vee \bigvee_{\vt \in R_{i} \setminus \{\vr, \vs\}} \vt\Big) \wedge \Big(\xv \vee \sv' \vee \vs \vee \bigvee_{\vt \in R_{i} \setminus \{\vr, \vs\}} \vt\Big) =: \va \wedge \vb,
	\]
	where $\va$ is co-small by the induction hypothesis and $\vb$ is co-small since $\sv' \vee \vs$ is. 
	Further, $\va \geq \vs \geq \vs' \geq \bv$ and $\vb \geq \sv' \geq \sv \geq \av$, and hence $\vy = \va \wedge \vb$ is co-small. Therefore, $R'$ is narrow in $P$.
\end{proof}

We are now ready to prove \cref{lem:NearMaxAndClRelStar}:

\begin{proof}[Proof of \cref{lem:NearMaxAndClRelStar}]
    By \cref{prop:MaxSepsAreCloselyRelated} and \ref{prop:TheSetOfMaxSepsInPsigmaIsNarrowInP}, $R^P_\sigma$ is closely related to and narrow in~$P$. Applying \cref{cor:unscramblingsets} to $R^P_\sigma$ thus yields an unscrambling $R'$ of $R^P_\sigma$ which is closely related to $P$ and nested with $\sigma$. Moreover, by \cref{lem:NarrowStaysNarrow}, $R'$ is narrow in $P$. Then $\rho := \{\vr \in R' : \vr \not< \vs \text{ for all } \vs \in R'\}$ is a star which is closely related to $P$, and it is clear from the definition that $\rho$ is also narrow in $P$. Further, we have $\sigma \leq \rho$ by the `moreover'-part of \cref{cor:unscramblingsets}. 
	Thus, we can take a star $\sigma' \subseteq P$ such that:
 
	\begin{enumerate}[label=(\arabic*)]
		\item\label{itm:quasimaxstars1} $\sigma \leq \sigma'$ and $\sigma'$ is closely related to and narrow in $P$;
		\item\label{itm:quasimaxstars2} the number of elements in $\sigma'$ is minimal under all stars in $P$ that satisfy \ref{itm:quasimaxstars1}. 
	\end{enumerate}
    
    We claim that $\sigma'$ is near-maximal in $P$, which clearly implies the assertion.
    For this, suppose for a contradiction that $\sigma'$ is not near-maximal in $P$, i.e.\ there is a separation~$\vz \in P$ which is greater than at least two separations $\vr, \vr' \in \sigma'$. 
    We first show that we may assume $\vz \in P_\sigma$. Indeed, let~$\vy$ be any maximal separation in $P$ with $\vz \leq \vy$. By the assumption on~$\sigma$, we may fix for every separation $\vs \in \sigma$ a profile~$Q_s$ of~$S$ such that~$\sv$ is closely related to~$Q_s$. If $\yv \in Q_s$ for a some $\vs \in \sigma_y := \{\vs \in \sigma : s \text{ crosses } y\}$, then $\vy \vee \vs = (\yv \wedge \sv)^*$ is an element of~$\vS$ and thus contained in~$P$ because~$P$ is a profile, which contradicts the maximality of~$\vy$ in~$P$. Thus, $\vy \in Q_s$ for all $\vs \in \sigma_y$. Then \cref{prop:InfimaOfClRelSetsAreClRel} implies that $\vy' := \vy \wedge \bigwedge_{\vs \in \sigma_y} \sv$ is an element of $\vS$. By \cref{lem:Fischlemma}, $y'$ is nested with $\sigma$, and thus $\vy' \in P_\sigma$. Moreover, since $\sigma$ is a star, we still have $\vr, \vr' \leq \vy'$, and thus $\vy'$ is greater than at least two separations in~$\sigma$.

    Therefore, there is a separation in $P_\sigma$ which is greater than at least two separations in~$\sigma$, and we let~$\vx \in P_\sigma$ be any maximal such separation. Set $R := \{\vs \in \sigma' : \vs \not\leq \vx\} \cup \{\vx\}$. By construction, we have $\bigvee_{\sigma'} \vs \leq \bigvee_{R} \vs$, which implies that $R$ is narrow in $P$. Moreover,~$\vx$ is closely related to $P$ by \cref{prop:MaxSepsAreCloselyRelated}.
	Applying \cref{cor:unscramblingsets} to $R$ yields a star $\sigma''$ which is closely related to~$P$ and narrow in~$P$ by \cref{lem:NarrowStaysNarrow}. Moreover, $\sigma \leq \sigma''$ by the `moreover'-part of \cref{cor:unscramblingsets}, and thus $\sigma''$ satisfies~\ref{itm:quasimaxstars1}. But since $\vx$ was greater than at least two separations in $\sigma'$, it follows that $\abs{\sigma''} = \abs{R} < \abs{\sigma'}$, which contradicts that $\sigma'$ satisfies~\ref{itm:quasimaxstars2}. Hence, $\sigma'$ is near-maximal in $P$.
\end{proof}

As mentioned earlier, we only needed the assumption that $S$ lies inside a \emph{distributive} universe in order to prove that the unscrambling of a set which is narrow in some profile is still narrow in that profile. 
Therefore, along the lines of the proof of \cref{thm:abstracttanglesToT}, one can show that even if $S$ lies inside some non-distributive universe, it is still possible to refine any nested set $\tilde{N} \subseteq S$ that satisfies the premise of \cref{thm:abstracttanglesToT}. The refinement~$N$ of~$\tilde{N}$ then has the property that all its inessential nodes are stars in $\cF$, and all its essential nodes are unscramblings of the maximal separations in the tangle living at that node.

As the essential nodes of $N$ are such unscramblings, they should still be intuitively `close' to the tangle they are home to. 
However, since the assumption that the universe is distributive was crucial for the proof of \cref{lem:NarrowStaysNarrow}, we cannot say much about the `closeness' in terms of the partial order on $\vS$. 
It thus remains an open problem whether \cref{lem:ReflemForEssStars}, and hence \cref{thm:abstracttanglesToT}, still hold in separation systems inside universes that are not distributive.

\bibliographystyle{plain}
\bibliography{collective.bib}

\end{document}